\documentclass[a4paper,10pt,reqno]{amsart}
\usepackage[utf8]{inputenc}
\usepackage{mathtools}
\usepackage{amssymb}
\usepackage{amsfonts}
\usepackage{amsthm}
\usepackage{bbm}
\usepackage{fullpage}
\usepackage{color}
\usepackage{hyperref}
\usepackage{stmaryrd}
\usepackage{enumitem}
\usepackage{amsaddr}

\newcommand{\N}{\mathbb N}
\newcommand{\Z}{\mathbb Z}
\newcommand{\R}{\mathbb R}
\newcommand{\Q}{\mathbb Q}
\newcommand{\eps}{\varepsilon}
\renewcommand{\P}{\mathbb P}
\newcommand{\E}{\mathbb E}
\newcommand{\1}{\mathbf 1}
\newcommand{\0}{\mathbf 0}
\newcommand{\F}{\mathcal F}
\newcommand{\ind}{\mathbbm 1}

\newtheorem{theorem}{Theorem}[section]
\newtheorem{lemma}[theorem]{Lemma}
\newtheorem{proposition}[theorem]{Proposition}

\newtheorem{thmx}{Theorem}

\theoremstyle{remark}
\newtheorem{example}[theorem]{Example}
\newtheorem{remark}[theorem]{Remark}

\theoremstyle{definition}
\newtheorem{definition}[theorem]{Definition}

\newcommand{\dd}{\text{d}}

\setlength{\parindent}{0pt}

\newcommand{\h}{h}
\newcommand{\esssup}{\operatorname{ess\,sup}}
\newcommand{\essinf}{\operatorname{ess\,inf}}

\newcommand{\aalpha}{{\boldsymbol{\alpha}}}
\newcommand{\ggamma}{{\boldsymbol{\alpha}}}

\renewcommand{\S}{\mathcal S}

\newcommand{\U}{\mathcal U}

\setlength\itemsep{0em}

\newcommand{\rrho}{{\boldsymbol{\rho}}}

\newcommand{\p}{\mathfrak p}

\newcommand{\n}{\llbracket n\rrbracket}

\newcommand{\M}{\mathcal M}
\newcommand{\eeta}{{\boldsymbol{\eta}}}

\newcommand{\Omegas}{\Omega^{\,\textup{site}}}
\newcommand{\Ps}{\P^{\,\textup{site}}}

\newcommand{\Ts}{T^{\,\textup{site}}}

\newcommand{\Omegab}{\Omega^{\textup{bond}}}
\newcommand{\Pb}{\P^{\textup{bond}}}

\newcommand{\Tb}{T^{\textup{bond}}}

\newcommand{\Omegap}{\Omega^{\textup{Pois.}}}
\newcommand{\Pp}{\P^{\textup{Pois.}}}

\newcommand{\PSRW}{P^{\textup{SRW}}}
\newcommand{\PBM}{P^{\textup{BM}}}

\newcommand{\llambda}{\boldsymbol{\lambda}}
\newcommand{\mmu}{\boldsymbol{z}}

\makeatletter
\def\namedlabel#1#2#3{\begingroup
    #2%
    \def\@currentlabel{#3}%
    \phantomsection\label{#1}\endgroup
}
\makeatother

\newcommand{\nnu}{\mu}

\title[Stability of weak disorder phase for directed polymer]{Stability of weak disorder phase for directed polymer with applications to limit theorems}

\author{Stefan Junk}
\address{AIMR, Tohoku University. 2-1-1 Katahira, Aoba-ku, Sendai, 980-8577 Japan}
\email{sjunk@tohoku.ac.jp}
\date{\today}

\subjclass[2010]{60K37}
\keywords{Random dynamics, random environments, scaling limit}

\begin{document}

\lineskip=0pt

\begin{abstract}
We study the directed polymer model in a bounded environment with bond disorder and show that, in the interior of the weak disorder phase, weak disorder continues to hold upon perturbation by a small bias. Using this stability result, we give a new proof for the central limit theorem (CLT) in probability for the directed polymer model in the interior of the weak disorder phase. We also show that the large deviation rate function agrees with that of the underlying random walk. For the Brownian polymer model, we improve the convergence in the CLT to almost sure convergence in the whole weak disorder phase. The main technical tools are a new moment bound from \cite{J21_1} and a quantitative comparison between the associated martingales at different inverse temperatures.
\end{abstract}

\maketitle

\section{New tools}\label{sec:new}

Before introducing the model and the results, we briefly discuss the new ideas introduced in this paper.

\smallskip The \emph{directed polymer model} was initially introduced in the physics literature to describe the folding of long molecule chains in a solution with random impurities. Mathematically, it is a model for random paths, called \emph{polymers}, that are attracted or repulsed by a space-time random environment with a parameter $\beta\geq 0$, called \emph{inverse temperature}, governing the strength of the interaction. In recent years, the model has attracted much interest because it is conjectured that in a certain \emph{low temperature} regime it belongs to the KPZ (Kardar-Parisi-Zhang) universality class of randomly growing surfaces. In contrast, this work is mainly concerned with the \emph{high temperature} phase, where it is known that the influence of the disorder disappears asymptotically and that the long-term behavior is diffusive. This weak disorder phase is characterized by whether a certain associated martingale, $(W_n^\beta)_{n\in\N}$, is uniformly integrable, which is known to hold for small $\beta$ if the spatial dimension is at least three.

\smallskip There is no closed-form characterization for the critical inverse temperature $\beta_{cr}$ and, in practice, the uniform integrability is a fairly weak integrability condition that is not easy to analyze. Several important features of the weak disorder phase have been established, see for example \cite{CY03,CY06,BC20}, but many more papers have focused on a different, \emph{very high temperature} phase, which is characterized by $L^2$-boundedness of the associated martingale. This condition is known to be strictly stronger than uniform integrability, cf. \cite[Remark 5.2(ii)]{C17} for the references. In the recent work \cite{J21_1}, we managed to improve this situation by showing that, in the whole weak disorder phase and for a certain class of environments, $(W_n^\beta)_{n\in\N}$ is $L^p$-bounded for some $p>1$. This result plays an important role in the current paper by allowing us to interchange limits and expectations. 

\smallskip This work introduces a second new tool for the study of the weak disorder phase. Recall from \cite{N16} that larger values of $\beta$ make the martingale $W_n^\beta$ ``more variable'', in the sense that $\beta\mapsto \E[f(W_n^\beta)]$ is increasing for every $f\colon\R\to\R$ convex. While this monotonicity agrees with the intuition that $\beta$ is a measure for the strength of disorder in the model, it does not provide any quantitative information. Our new idea is to use the so-called \emph{noise operator} $T_\rho$ as a measure for how variability increases with $\beta$. Here, $T_\rho$ acts on the environment by retaining each coordinate with probability $\rho$ and resampling it with probability $1-\rho$, independently. This operator has featured prominently in problems motived by computer science, social choice and combinatorics, see \cite{O14,GS15}, but has, to the best of our knowledge, not appeared in the context of the directed polymer model before. We show that for every convex function $f\colon\R\to\R$,
\begin{align}\label{eq:noise}
\E[f(W_n^{\beta_1})]\leq \E[f(T_\rho W_n^{\beta_2})]
\end{align}
where $0<\beta_1<\beta_2$ and where $\rho<1$ has an explicit expression in terms of $\beta_1$ and $\beta_2$. Roughly speaking, \eqref{eq:noise} allows us to integrate out parts of the environment at the ``cost'' of increasing the inverse temperature.

\smallskip This result is particularly useful if we consider \emph{bond disorder}, in contrast to the \emph{site disorder} studied most often in the literature. To give an example, in Proposition~\ref{prop:drift} we apply \eqref{eq:noise} to integrate out the disorder on a proportion of the edges pointing in, say, the $e_1$-direction and show that the resulting object can be interpreted as the martingale associated  to a polymer with a drift in the opposite direction. We note that, somewhat unexpectedly, our argument only works for bond disorder -- in general, bond and site disorder are regarded as equivalent and the focus on site disorder in the literature seems to be based mainly on notational convenience and convention.

\section{Model and results}

\subsection{Definition of the model}\label{sec:defs}

We study three variations of the directed polymer model: two of them are in discrete time with space-time disorder associated to either the sites or the bonds of $\N\times\Z^d$, and the third model, called the \emph{Brownian polymer model}, is in continuous time with the disorder given by a Poisson point process. Let $(\omega_0,\P_0)$ be a probability measure on $(\Omega_0,\F_0)=(\R,\mathcal B(\R))$, where we assume 
\begin{align}\label{eq:exp_mom}
\E\left[e^{\beta|\omega_0|}\right]<\infty\qquad\text{ for all }\beta\geq 0.
\end{align}
We also assume that $\omega_0$ is not almost surely constant. The notation is similar for all three models:
\begin{description}
 \item[\namedlabel{itm:site}{Site disorder (site dis.)}{site dis.}] Let $\Omegas=\Omega_0^{\N\times\Z^d}$, $\omega=(\omega_{t,x})_{t\in\N,x\in\Z^d}$ and $\Ps$ the product measure, $\Ps=\bigotimes_{(t,x)\in\N\times\Z^d}\P_0$. 

 \label{it:site}
 \item[\namedlabel{itm:bond}{Bond disorder (bond dis.)}{bond dis.}] Let $\Omegab=\Omega_0^{E}$, $\omega=(\omega_e)_{e\in E}$ and $\Pb=\bigotimes_{e\in E}\P_0$, where $E$ denotes the set of directed nearest-neighbor edges in $\N\times\Z^d$,
 \begin{align}
E&\coloneqq \Big\{\big((n,x),(n+1,x+u)\big)\colon n\in\N,x\in\Z^d,u\in\U\Big\},\label{eq:edges}\\
\U&\coloneqq \{\pm e_i\colon i=1,\dots,d\}.\label{eq:def_U}
\end{align}
\item[\namedlabel{itm:pois}{Poissonian disorder (pois. dis.)}{pois. dis.}] Let $\Omegap$ be the set of locally finite counting measures $\omega$ on $\R_+\times\R^d\times\Omega_0$ and $\Pp$ the Poisson point process with intensity measure $\dd t\,\dd x\,\P_0(\dd \eta)$. We identify $\omega\in\Omegap$ with its support, writing $(t,x,\eta)\in\omega$ in place of $\omega(\{(t,x,\eta)\})=1$.
\end{description}

The unperturbed model is a simple random walk $(X=(X_n)_{n\in\N},\PSRW)$ for the discrete-time models and Brownian motion $(B=(B_t)_{t\geq 0},\PBM)$ for \eqref{itm:pois}. The perturbation is defined as a Gibbs measure, where the \emph{energy} of a path in environment $\omega$ is given by
\begin{align*}
H_n(\omega,x)&\textstyle \coloneqq \sum_{i=1}^n\omega_{i,x_i}\qquad\text{ for }\eqref{itm:site},\\
H_n(\omega,x)&\textstyle \coloneqq \sum_{i=1}^n \omega_{((i-1,x_{i-1}),(i,x_i))}\qquad\text{ for }\eqref{itm:bond}, \\
H_t(\omega,b)&\coloneqq \textstyle\sum_{(s,y,\eta)\in \omega}\eta\ind\{s\in[0,t],y\in U(b_s)\}\qquad\text{ for }\eqref{itm:pois}.
\end{align*}
Here $U(x)$ denotes the ball of unit volume around $x\in\R^d$. In words, the energy of a path is the sum, resp. integral, of the environment observed along the path. The \emph{polymer measure} is defined by
\begin{equation}\label{eq:poly_meas}
\begin{split}
\mu_{\omega,n}^\beta(\dd X)&\coloneqq  (Z_n^\beta(\omega))^{-1}e^{\beta H_n(\omega,X)} \PSRW(\dd X)\qquad\text{ for }\eqref{itm:site}\text{ and }\eqref{itm:bond},\\
 \mu_{\omega,t}^\beta(\dd B)&\coloneqq (Z_t^\beta(\omega))^{-1}e^{\beta H_t(\omega,B)}\PBM(\dd B)\qquad\text{ for }\eqref{itm:pois},
\end{split}
\end{equation}
where $Z_t^\beta$ is the normalizing constant, called the \emph{partition function} of the model. Under the polymer measure, paths are attracted to areas of space-time where the environment is positive and repelled by areas where it is negative. Note that $(\mu_{\omega,n}^\beta)_{n\in\N}$ is not a consistent family, i.e., there is no infinite volume probability measure ``$\mu_{\omega,\infty}^\beta$'' whose projection to time $n$ agrees with $\mu_{\omega,n}^\beta$, simultaneously for all $n\in\N$. The \emph{associated martingale} mentioned in the previous section is defined by 
\begin{align*}
W_n^\beta(\omega)&\coloneqq  Z_n^{\beta}(\omega)e^{-n\lambda(\beta)}\qquad\text{ for }\eqref{itm:site}\text{ and }\eqref{itm:bond},\\
W_t^\beta(\omega)&\coloneqq  Z_t^{\beta}(\omega)e^{-t(e^{\lambda(\beta)}-1)}\qquad\text{ for }\eqref{itm:pois},
\end{align*}
where $\lambda(\beta)\coloneqq \log \E[e^{\beta\omega_0}]$. 

\smallskip We also introduce notation for adding a drift to the underlying simple random walk, resp. Brownian motion, which we need for the perturbative arguments used to prove the main results. We use bold symbols to indicate vector-valued parameters. First, for the discrete-time models, let $\M(\U)$ denote the set of probability measures on $\U$ (recall \eqref{eq:def_U}). For $\aalpha\in\M(\U)$, we write $P^\aalpha$ for the random walk with increment distribution $\aalpha$, i.e.,
\begin{align}\label{eq:RW_drift}
P^\aalpha(X_{n+1}-X_n=u)=\aalpha_u,
\end{align}
and $Z_n^{\beta,\aalpha}$, $W_n^{\beta,\aalpha}$ and $\mu_{\omega,n}^{\beta,\aalpha}$ for the partition function, associated martingale and polymer measure with $\PSRW$ replaced by $P^\aalpha$. Similarly, for the Brownian polymer model, let $\llambda\in\R^d$ and write $P^{\llambda}$ for the law of Brownian motion with drift $\llambda$, 
\begin{align*}
P^{\llambda}(\dd B)=e^{\llambda\cdot B_t-\frac t2\|\llambda\|_2^2}\PBM(\dd B),
\end{align*}
and define $Z_t^{\beta,\llambda}$, $W_t^{\beta,\llambda}$ and $\mu_{\omega,t}^{\beta,\llambda}$ as in the discrete-time case. If no drift is specified, we always refer to the symmetric case $\aalpha=\0$, resp. $\llambda=\0$.

As a non-negative martingale, the limit $W_\infty^{\beta,\aalpha}\coloneqq\lim_{t\to\infty}W_t^{\beta,\aalpha}\geq 0$ exists almost surely. It characterizes the phase transition from high to low temperature, i.e., we say that \emph{weak disorder} \eqref{eq:WD} holds if
\begin{align}\label{eq:WD}\tag{WD}
\P(W_\infty^{\beta,\aalpha}>0)>0
\end{align}
and that \emph{strong disorder} \eqref{eq:SD} holds otherwise, i.e., if
\begin{align}\label{eq:SD}\tag{SD}
\P(W_\infty^{\beta,\aalpha}=0)=1.
\end{align}
It is not hard to see that $W_\infty^\beta$ satisfies a zero-one law, i.e., for all $\beta\geq 0$,
\begin{align}\label{eq:zero_one}
\P\left(W_\infty^\beta>0\right)\in\{0,1\}.
\end{align}
Finally, we say that the environment is \emph{upper bounded} \eqref{eq:upper_bd}, \emph{lower bounded} \eqref{eq:lower_bd} or \emph{bounded} \eqref{eq:bd} if there exists $K>0$ such that, respectively,
\begin{align}
\P_0\left([K,\infty)\right)&=0,\tag{U-Bd.}\label{eq:upper_bd}\\
\P_0((-\infty,-K])&=0,\tag{L-Bd.}\label{eq:lower_bd}\\
\P_0((-\infty,-K]\cup[K,\infty))&=0.\tag{Bd.}\label{eq:bd}
\end{align}

\subsection{Known results}\label{sec:known}

We review some results about the model to put our results into context. A more complete overview can be found in \cite{CSY04} and \cite{C17} for \eqref{itm:site} and \cite{CC18} for \eqref{itm:pois}. 

\smallskip A major question in statistical mechanics is about ``disorder relevance'', i.e., whether the long-term behavior of a (randomly) perturbed model is different from the unperturbed model. In the present context, it is natural to ask whether the sequence $(\mu_{\omega,n}^\beta)_{n\in\N}$ satisfies a central limit theorem similar to the infinite temperature model $(\mu_{\omega,n}^0)_{n\in\N}$.
The answer depends on whether weak disorder \eqref{eq:WD} or strong disorder \eqref{eq:SD} hold, as we will recall in the following. First, we have a phase transition in $\beta$ and $L^2$-boundedness holds deep inside the weak disorder phase.

\begin{thmx}\label{thmx:phase}
Consider any of the three models introduced above, and let $T=\N$ for the discrete-time models \eqref{itm:site} and \eqref{itm:bond} and $T=\R_+$ for \eqref{itm:pois}.
\begin{enumerate}
 \item[(i)] There exist critical values $\beta_{cr},\beta_{cr}^{L^2}\in[0,\infty)$ such that 
 \begin{itemize}
  \item \eqref{eq:WD} holds for $\beta<\beta_{cr}$ and \eqref{eq:SD} for $\beta>\beta_{cr}$
  \item $(W_t^\beta)_{t\in T}$ is $L^2$-bounded if and only if $\beta<\beta_{cr}^{L^2}$.
 \end{itemize}
 \item[(ii)] If $d\geq 3$, then $0<\beta_{cr}^{L^2}\leq \beta_{cr}$. Moreover, for \eqref{itm:site}, $\beta_{cr}^{L^2}<\beta_{cr}$.
\end{enumerate}
\end{thmx}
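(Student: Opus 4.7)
The plan is to handle the two critical values separately. For $\beta_{cr}$, I would establish monotonicity of \eqref{eq:WD} in $\beta$; the cleanest route given the tools in the paper is via the noise-operator inequality \eqref{eq:noise}. Taking $\phi\colon\R_+\to\R_+$ convex with $\phi(x)/x\to\infty$ and using Jensen's inequality on the representation $T_\rho W_n^{\beta_2}(\omega) = \E_{\omega'\mid\omega}[W_n^{\beta_2}(\omega')]$ yields
\begin{align*}
\E[\phi(W_n^{\beta_1})] \leq \E[\phi(T_\rho W_n^{\beta_2})] \leq \E[\phi(W_n^{\beta_2})],
\end{align*}
so uniform integrability of $(W_n^{\beta_2})_{n\in\N}$ is inherited by $(W_n^{\beta_1})_{n\in\N}$ whenever $\beta_1<\beta_2$. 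For a nonnegative mean-one martingale, uniform integrability is equivalent to $\E[W_\infty^\beta]=1$, which by \eqref{eq:zero_one} is in turn equivalent to \eqref{eq:WD}; hence the set of $\beta$ satisfying \eqref{eq:WD} is downward-closed, and $\beta_{cr}$ is defined as its supremum. Finiteness $\beta_{cr}<\infty$ follows from the classical fractional-moment argument, namely that $\E[(W_n^\beta)^{1/2}]\to 0$ exponentially for $\beta$ sufficiently large.

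For the $L^2$ threshold I would run the standard replica computation. For \eqref{itm:site},
\begin{align*}
\E\!\left[(W_n^\beta)^2\right] = E^{\otimes 2}\!\left[\exp\bigl((\lambda(2\beta)-2\lambda(\beta))\,L_n(X,X')\bigr)\right],
\end{align*}
where $L_n(X,X')\coloneqq\sum_{i=1}^n\ind\{X_i=X_i'\}$ is the overlap of two independent simple random walks; analogous identities with $L_n$ replaced by an edge-collision count, resp.\ a unit-ball overlap integral, hold for \eqref{itm:bond} and \eqref{itm:pois}. Since $\gamma(\beta)\coloneqq\lambda(2\beta)-2\lambda(\beta)$ is nonnegative, vanishes at $0$, and is strictly increasing under nondegeneracy of $\omega_0$, the set $\{\beta\geq 0:\sup_n\E[(W_n^\beta)^2]<\infty\}$ is a downward-closed interval whose supremum defines $\beta_{cr}^{L^2}$; restricting to paths with $X\equiv X'$ gives the lower bound $\E[(W_n^\beta)^2]\geq(2d)^{-n}e^{\gamma(\beta)n}$ and hence $\beta_{cr}^{L^2}<\infty$. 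The inequality $\beta_{cr}^{L^2}\leq\beta_{cr}$ is then immediate since $L^2$-boundedness implies uniform integrability, hence \eqref{eq:WD}.

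For (ii), $\beta_{cr}^{L^2}>0$ in $d\geq 3$ uses transience of the difference walk $X-X'$: the total overlap $L_\infty(X,X')$ is a.s.\ finite with a geometric tail governed by the return probability of $X-X'$ to the origin, so $E^{\otimes 2}[e^{\gamma L_\infty}]<\infty$ for all $\gamma$ below an explicit threshold, and any $\beta$ with $\gamma(\beta)$ below that threshold is $L^2$-subcritical. The main obstacle is the strict inequality $\beta_{cr}^{L^2}<\beta_{cr}$ for \eqref{itm:site}, which is Birkner's theorem. The approach I would try is a size-biasing argument: under the measure with density $W_n^\beta$, the i.i.d.\ site environment acquires a deterministic shift along the polymer's trajectory, and uniform integrability of $(W_n^\beta)_{n\in\N}$ can be recast as $L^2$-boundedness of a \emph{tilted} martingale for a walk with drift $\aalpha\in\M(\U)$ in the shifted environment. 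A variational argument over $\aalpha$ then exhibits a nonzero drift making the tilted martingale $L^2$-bounded at some $\beta>\beta_{cr}^{L^2}$. Carrying out this variational step is the delicate part; it uses the coordinate-by-coordinate i.i.d.\ structure of site disorder essentially, which is presumably why the theorem only makes the claim for \eqref{itm:site}.
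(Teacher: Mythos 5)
This theorem is cited from the literature rather than proved in the paper, so there is no internal proof to compare against; I'll assess your reconstruction on its own terms. Most of it is sound, but two points deserve comment.

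First, invoking the noise-operator inequality \eqref{eq:noise} for $\beta$-monotonicity is circular and overreaching: that inequality is Theorem \ref{thm:semigroup} of \emph{this} paper, a new tool that requires the boundedness hypothesis \eqref{eq:bd}, which Theorem \ref{thmx:phase} does not impose. What your argument actually uses is only the outer inequality $\E[\phi(W_n^{\beta_1})]\le\E[\phi(W_n^{\beta_2})]$, which is precisely the convex-order result of \cite{N16} (the $\rho=1$ endpoint of \eqref{eq:semigroup}) and needs no boundedness; you should cite that directly. The de la Vall\'ee-Poussin criterion, the zero-one law, the replica identity with $\gamma(\beta)=\lambda(2\beta)-2\lambda(\beta)$, and the transience argument for $X-X'$ in $d\geq 3$ are all correctly deployed. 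A small caveat: the fractional-moment bound yields $\beta_{cr}<\infty$ only when $\beta\lambda'(\beta)-\lambda(\beta)$ eventually exceeds $\log 2d$, which can fail for bounded environments with a large atom at $\esssup\omega_0$; the theorem statement (and your argument) elides this.

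Second, and more seriously, the strict inequality $\beta_{cr}^{L^2}<\beta_{cr}$ for \eqref{itm:site} --- Birkner's theorem \cite{B04} --- is not actually proved by your sketch. The spine/size-biasing idea is the right starting point, but the claim that uniform integrability ``can be recast as $L^2$-boundedness of a tilted martingale'' does not match what the spinal decomposition produces. Birkner's criterion is a summability condition on the intersection of the spine with an auxiliary walk whose increment law is the size-biased tilt determined by $\omega_0$; this is a genuinely \emph{weaker} requirement than the $L^2$ overlap condition, not a rephrasing of it in tilted form. The strict separation of thresholds then comes from exhibiting $\beta$ past $\beta_{cr}^{L^2}$ where that weaker criterion still holds. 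Your ``variational argument over $\aalpha$'' gestures at this but, as you acknowledge, does not carry out the computation, so part (ii) is left open.
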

See \cite[Theorem 1.1]{CY03}, \cite[Theorem 2.1.1]{CY05} and the references in \cite[Remark 5.2(ii)]{C17} in the case of \ref{itm:site}. We did not find references for \ref{itm:bond}, but the proofs from \ref{itm:site} apply with minimal modifications. Also, while the strict inequality $\beta_{cr}^{L^2}<\beta_{cr}$ in part (ii) has only been obtained for \ref{itm:site}, there is no reason to expect that the situation is different for the other models. Whether the critical value $\beta_{cr}$ belongs to the strong or weak disorder phase is a major open problem, see \cite[Open Problem 3.2]{C17}.

\smallskip As mentioned in Section~\ref{sec:new}, stronger integrability has recently been proved for bounded environments in the whole weak disorder phase.

\begin{thmx}[{\cite[Theorem 1.1]{J21_1}}]\label{thmx:sj}
Consider \eqref{itm:site} or \eqref{itm:bond}. If \eqref{eq:WD} holds, then
\begin{align}\label{eq:sup_integrable}
\E\left[\sup_{n\in\N}W_n^{\beta,\aalpha}\right]<\infty.
\end{align}
If additionally \eqref{eq:upper_bd} holds, then there exists $p>1$ such that
\begin{align}\label{eq:Lp_bd}
\sup_{n\in\N}\|W_n^{\beta,\aalpha}\|_p<\infty.
\end{align}
Analogous results hold for \eqref{itm:pois}.
\end{thmx}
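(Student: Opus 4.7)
The plan is to establish the $L^p$-bound \eqref{eq:Lp_bd} for some $p>1$ under the combined hypotheses \eqref{eq:WD} and \eqref{eq:upper_bd}; given this, \eqref{eq:sup_integrable} under boundedness follows at once from Doob's maximal inequality, and the general case of \eqref{eq:sup_integrable} without \eqref{eq:upper_bd} is handled separately by a truncation argument at the end. The starting point is the Markov decomposition of the partition function at an intermediate time $k\in\{0,\dots,n\}$: writing $\bar W_k^{\beta,\aalpha}(0,x;\omega)$ for the point-to-point analogue of $W_k^{\beta,\aalpha}$ restricted to paths ending at $x$, and $\theta_{k,x}$ for the space-time shift of the environment by $(k,x)$,
\begin{align*}
W_n^{\beta,\aalpha}(\omega)=\sum_{x\in\Z^d}\bar W_k^{\beta,\aalpha}(0,x;\omega)\,W_{n-k}^{\beta,\aalpha}(\theta_{k,x}\omega).
\end{align*}
Under \eqref{itm:bond} the two factors on the right depend on disjoint sets of edges and are therefore independent; under \eqref{itm:site} one needs to be slightly careful about the shared time-slice at time $k$, but the same independence holds up to a bounded error, and \eqref{itm:pois} is handled similarly using independence across disjoint time strips.

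Next, I would close a self-improving inequality of the form $\|W_n^{\beta,\aalpha}\|_p\le A_{k,p}\,\|W_{n-k}^{\beta,\aalpha}\|_p+B_{k,p}$ for $p$ slightly larger than $1$, where $A_{k,p}$ and $B_{k,p}$ come from applying Minkowski to the decomposition above and splitting the sum over $x$ according to whether $x$ is a \emph{typical} endpoint of $P^\aalpha$. On the typical part, the UI consequence $\E[\bar W_k^{\beta,\aalpha}(0,x;\cdot)]=P^\aalpha(X_k=x)$ combined with a local CLT controls the contribution. On the atypical part, \eqref{eq:upper_bd} is essential: bounded environments yield uniform exponential-moment bounds on $\log W_n$, so Azuma--Hoeffding or a log-Sobolev inequality on the product space gives rapidly decaying tails. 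Choosing $k$ large (depending on $p$) should make $A_{k,p}<1$, yielding \eqref{eq:Lp_bd} after iteration.

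Finally, the general statement \eqref{eq:sup_integrable} without \eqref{eq:upper_bd} follows from an $L\log L$-type argument: truncate the environment at level $K$, apply \eqref{eq:Lp_bd} to the truncated model (which remains in weak disorder for large $K$ by continuity of $\beta_{cr}$ in the law of the environment), and pass $K\to\infty$ using \eqref{eq:exp_mom} and dominated convergence.

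The hard part will be closing the bootstrap in the second paragraph: naive $L^p$-contractions work only in the strictly $L^2$-subcritical regime, and one has essentially no room to lose. Success hinges on the delicate interplay between UI, which yields $\E[W_k]=1$ and little more, and boundedness, which provides quantitative concentration. I expect the argument to rest on a carefully tuned fractional-moment estimate where $p\in(1,2)$ is chosen so that the typical and atypical contributions balance into a strict contraction.
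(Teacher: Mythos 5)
Theorem~\ref{thmx:sj} is not proved in the present paper: it is imported verbatim from \cite{J21_1}, with only a remark that the extensions from the symmetric site-disorder case to general drifts, bond disorder, and continuous space-time require minor modifications. There is therefore no proof in this paper against which to compare your proposal. Judged on its own terms, the proposal has genuine gaps.

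The central bootstrap cannot close as described. After the Markov decomposition and Minkowski's inequality one obtains $\|W_n^{\beta,\aalpha}\|_p \le \sum_x \|\bar W_k^{\beta,\aalpha}(0,x)\|_p\,\|W_{n-k}^{\beta,\aalpha}\|_p$, and since $\E[\bar W_k^{\beta,\aalpha}(0,x)] = P^\aalpha(X_k=x)$ with $p>1$, Jensen forces $\sum_x \|\bar W_k^{\beta,\aalpha}(0,x)\|_p \ge \sum_x P^\aalpha(X_k=x)=1$; in particular the ``typical'' part of the $x$-sum alone already produces a coefficient at least one, so no typical/atypical splitting can manufacture a contraction. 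The concentration route for the atypical part does not fill the gap either: Azuma--Hoeffding or log-Sobolev applied to $\log W_n$ yields Gaussian concentration with variance of order $n$, i.e.\ bounds like $\P(W_n>t)\lesssim \exp(-c(\log t)^2/n)$, which degenerate as $n\to\infty$ and do not give a uniform-in-$n$ polynomial tail. You flag the first difficulty yourself, but the proposal does not supply the idea needed to overcome it, and this is exactly where the real content of \cite{J21_1} lies.

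The truncation argument for \eqref{eq:sup_integrable} without \eqref{eq:upper_bd} is also unsound. It would require both that the $K$-truncated environment remain in weak disorder for large $K$, which you justify by ``continuity of $\beta_{cr}$ in the law of the environment'' -- not an established fact and in general a delicate question -- and that the exponent $p$ and the bound in \eqref{eq:Lp_bd} be uniform in $K$, which is not addressed and need not hold (nothing prevents $p\downarrow 1$ as $K\to\infty$). The statement itself signals that this route is not intended: \eqref{eq:sup_integrable} is asserted under \eqref{eq:WD} alone, and in \cite{J21_1} it is established without passing through boundedness, rather than being retro-fitted from the $L^p$-bound by truncation.
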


In \cite{J21_1}, the proof of Theorem~\ref{thmx:sj} is given for site disorder in the symmetric case, but the extensions to general drifts, to bond disorder and to continuous space-time require only minor modifications. The question at the beginning of this section about the long-term behavior of $(\mu_{\omega,n}^\beta)_{n\in\N}$ has been answered for the whole weak disorder phase:

\begin{thmx}[{\cite[Theorem 1.2]{CY06}}]\label{thmx:clt}
Consider \eqref{itm:site} in dimension $d\geq 3$ and assume \eqref{eq:WD}. Then for all $f\colon \R^d\to\R$ bounded and continuous,
\begin{align}\label{eq:clt}
\sum_{x\in\Z^d}\mu_{\omega,n}^\beta(X_n=x)f\left(\frac{x}{\sqrt n}\right)\xrightarrow[n\to\infty]P \int_{\R^d}f(x) k(x)\dd x,
\end{align}
where $k$ is the density of the $d$-dimensional standard normal distribution.  
\end{thmx}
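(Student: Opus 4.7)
My approach is through the Fourier transform of the endpoint distribution. By Fourier inversion for $\phi\in C_c^\infty(\R^d)$ and standard density arguments, it suffices to show convergence in probability $\mu_{\omega,n}^\beta[e^{i\theta\cdot X_n/\sqrt n}]\to e^{-\|\theta\|^2/(2d)}$ pointwise in $\theta\in\R^d$. A Girsanov-type rewriting of the SRW part of the partition function (using that $e^{\llambda\cdot X_n-n\kappa(\llambda)}$ is a $\PSRW$-martingale for any complex $\llambda$, with $\kappa(\llambda)\coloneqq\log\int e^{\llambda\cdot X_1}\dd\PSRW$) yields
\begin{align*}
\mu_{\omega,n}^\beta[e^{i\theta\cdot X_n/\sqrt n}]\;=\;e^{n\kappa(i\theta/\sqrt n)}\cdot\frac{\widetilde W_n^{\beta,i\theta/\sqrt n}(\omega)}{W_n^\beta(\omega)},
\end{align*}
where $\widetilde W_n^{\beta,\llambda}$ denotes the complex analogue of $W_n^{\beta,\aalpha}$ built from tilted weights proportional to $e^{\llambda\cdot u}$ on $\U$. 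Since $e^{n\kappa(i\theta/\sqrt n)}\to e^{-\|\theta\|^2/(2d)}$, the CLT reduces to the drift-continuity claim $\widetilde W_n^{\beta,i\theta/\sqrt n}/W_n^\beta\to 1$ in probability.

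The denominator is controlled by Theorem \ref{thmx:sj}: under \eqref{eq:WD} together with \eqref{eq:bd}, $W_n^\beta\to W_\infty^\beta$ in $L^p$ for some $p>1$, with $W_\infty^\beta>0$ almost surely by the zero-one law \eqref{eq:zero_one}. For the numerator I would perform a Markov decomposition at a fixed intermediate time $m$: conditioning on $(X_1,\dots,X_m)$ expresses both $W_n^\beta$ and $\widetilde W_n^{\beta,\llambda_n}$ as $\PSRW$-averages of $e^{\beta H_m-m\lambda(\beta)}$ (times, in the tilted case, the negligible factor $e^{\llambda_n\cdot X_m-m\kappa(\llambda_n)}=1+o(1)$) against the shifted martingales $W_{n-m}^{\beta,\tau_{(m,X_m)}\omega}$, respectively $\widetilde W_{n-m}^{\beta,\llambda_n,\tau_{(m,X_m)}\omega}$. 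Under \eqref{eq:bd} the prefactors are bounded for fixed $m$, and the shifted undrifted martingales converge in $L^p$ to $W_\infty^{\beta,\tau_{(m,y)}\omega}$ uniformly in the spatial shift $y$ by stationarity and the $L^p$-bound, which handles $W_n^\beta\to W_\infty^\beta$ and reduces the problem to the corresponding convergence of the shifted drifted martingales.

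The hardest step is then showing that $\widetilde W_{n-m}^{\beta,\llambda_n,\tau_{(m,y)}\omega}\to W_\infty^{\beta,\tau_{(m,y)}\omega}$ in $L^1$ as $n\to\infty$ and $\llambda_n\to\0$, uniformly in $y$. Here the noise-operator inequality \eqref{eq:noise} and its consequence Proposition \ref{prop:drift} are decisive: the latter identifies a small real drift with a noise-operator perturbation $T_\rho$ at a slightly elevated inverse temperature $\beta_2<\beta_{cr}$, and the $L^{p'}$-bound at $\beta_2$ provided by Theorem \ref{thmx:sj} absorbs the perturbation in the limit. The complex drifts $\llambda_n=i\theta/\sqrt n$ fall outside the strictly real, convex framework of \eqref{eq:noise}; I would circumvent this by splitting into the real test functions $\cos(\theta\cdot X_n/\sqrt n)$ and $\sin(\theta\cdot X_n/\sqrt n)$ and running the drift-continuity argument for each trigonometric component separately via real drifts of order $|\theta|/\sqrt n$. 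Combining these $L^1$-convergences with the almost-sure positivity of $W_\infty^\beta$ then gives convergence of the characteristic function in probability, and hence the CLT.
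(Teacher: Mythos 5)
The paper does not prove Theorem \ref{thmx:clt} (it is cited from [CY06]); the natural comparison is with its proof of Theorem \ref{thm:clt_prob}, which gives the CLT for \emph{bond} disorder under \eqref{eq:bd} and $\beta < \beta_{cr}$. The most serious gap in your proposal is that the machinery you rely on -- Proposition \ref{prop:drift}, which rests on the coupling Lemma \ref{lem:strange_coupling} -- is established only for bond disorder, and the paper explicitly warns in Section 1 that ``our argument only works for bond disorder.'' That coupling reinterprets $T_\rrho W_n^{\beta,\aalpha}$ as a mixture of shifted martingales $W^{\beta,\aalpha'}_{|N_n(\Pi)|}(\omega(\Pi))$ by integrating out the environment on a proportion of edges in each direction $u\in\U$; there is no analogue for site disorder, where resampling a site affects all paths through it independently of direction. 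So the step on which your plan pivots is simply unavailable in the site-disorder setting of Theorem \ref{thmx:clt}, and even if it were, you additionally need \eqref{eq:bd} and strict $\beta<\beta_{cr}$, neither of which the theorem assumes.

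Even granting the bond-disorder setting, the Fourier route creates a problem your proposal does not actually resolve. The complex tilts $\llambda_n = i\theta/\sqrt n$ fall outside the convex-order/noise-operator framework, as you correctly observe, but decomposing $e^{i\theta\cdot x}$ into $\cos(\theta\cdot x)$ and $\sin(\theta\cdot x)$ does not recover real drifts: those trigonometric functions are not exponentials of real linear functionals, so there is no increment distribution $\aalpha\in\M(\U)$ and no nonnegative martingale $W_n^{\beta,\aalpha}$ on which to ``run the drift-continuity argument.'' The paper avoids this entirely by working with the moment generating function: for real $\llambda$ the identity \eqref{eq:drift_mgf} expresses $\mu_{\omega,n}^\beta[e^{\llambda\cdot X_n/\sqrt n}]$ through the genuine nonnegative martingale $W_n^{\beta,\aalpha(\llambda/\sqrt n)}$, to which Proposition \ref{prop:drift} applies directly; tightness is supplied separately by \eqref{eq:tightinprob}, and the CLT follows from MGF convergence along subsequences on a countable dense set of real $\llambda$ (Lemma \ref{lem:mgf_in_prob}). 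Your intermediate-time Markov decomposition is also absent from the paper's argument and unnecessary, because Proposition \ref{prop:drift}(ii) already gives uniform $L^p$-convergence over drifts near $\1/2d$ and the continuity of $\llambda\mapsto W_N^{\beta,\aalpha(\llambda)}$ at fixed $N$ does the rest.
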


Equivalent results are known for \eqref{itm:pois}, see 
\cite[Proposition A.2]{CNN20}. We will give an independent proof of \eqref{eq:clt} for bond disorder in the interior of the weak disorder phase $\beta<\beta_{cr}$, see Theorem~\ref{thm:clt_prob} below. Moreover, we show that for the Brownian polymer model, convergence in probability can be improved to almost sure convergence, see Theorem~\ref{thm:clt_as_pois}.

\smallskip The fact that the central limit theorem continues to hold for some $\beta>0$ came as a surprise to the community, and in this spirit the following result is interesting:

\begin{thmx}[{\cite[Theorem 6.2]{CY06}}]\label{thmx:analytic}
Consider \eqref{itm:site} in dimension $d\geq 3$ and let $\beta<\beta_{cr}^{L^2}$. Then, almost surely, $\lim_{n\to\infty}\frac{1}{n}\mu_{\omega,n}^\beta[H_n(\omega,X)]=\lambda'(\beta)$.
\end{thmx}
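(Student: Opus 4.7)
The starting observation is that the quantity of interest is a $\beta$-derivative: writing $f_n(\beta)\coloneqq\frac{1}{n}\log Z_n^\beta(\omega)$, we have
\begin{align*}
\frac{1}{n}\mu_{\omega,n}^\beta[H_n(\omega,X)]= f_n'(\beta),
\end{align*}
and each $f_n$ is convex in $\beta$ because $\beta\mapsto \log Z_n^\beta(\omega)$ is the logarithm of a sum of exponentials. Since $\lambda$ is real-analytic on $\R$ by the exponential moment assumption \eqref{eq:exp_mom}, the plan is to show $f_n\to\lambda$ pointwise almost surely on a neighborhood of $\beta$ and then invoke the standard convex-analysis fact that pointwise convergence of convex functions on an interval to a differentiable limit forces convergence of the derivatives at the points of differentiability.

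Decomposing $f_n(\beta')=\lambda(\beta')+\frac{1}{n}\log W_n^{\beta'}(\omega)$, I reduce the first step to showing $\frac{1}{n}\log W_n^{\beta'}(\omega)\to 0$ almost surely. Fix $\delta>0$ small enough that $[\beta-\delta,\beta+\delta]\subset(-\beta_{cr}^{L^2},\beta_{cr}^{L^2})$. For each $\beta'$ in this interval, $W_n^{\beta'}$ is an $L^2$-bounded (hence uniformly integrable) non-negative martingale, so $W_n^{\beta'}\to W_\infty^{\beta'}$ almost surely and, because $L^2$-boundedness implies \eqref{eq:WD}, the zero-one law \eqref{eq:zero_one} gives $W_\infty^{\beta'}>0$ almost surely. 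Consequently $\frac{1}{n}\log W_n^{\beta'}(\omega)\to 0$ almost surely. Choosing a countable dense set $D\subset(\beta-\delta,\beta+\delta)$ (containing $\beta$), I intersect the countably many null sets to obtain a single full-measure event $\Omega_0$ on which $f_n(\beta')\to\lambda(\beta')$ for all $\beta'\in D$.

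The second step is the convex-analysis lemma: on an open interval, a sequence of convex functions that converges pointwise on a dense subset to a finite convex function $\lambda$ in fact converges pointwise (and locally uniformly) everywhere, and whenever $\lambda$ is differentiable at some point $\beta$ one has $f_n'(\beta)\to\lambda'(\beta)$ (this is a consequence of the sandwich
\begin{align*}
\frac{f_n(\beta)-f_n(\beta-h)}{h}\leq f_n'(\beta)\leq \frac{f_n(\beta+h)-f_n(\beta)}{h}
\end{align*}
together with pointwise convergence at $\beta\pm h$ for $h\in D-\beta$ and sending $h\to 0$). Applying this on $\Omega_0$ at the point $\beta$, where $\lambda$ is even analytic, yields $f_n'(\beta)\to\lambda'(\beta)$ almost surely, which is exactly the claim.

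The only non-routine point is making sure the almost-sure set is independent of $\beta'$, which is why I pass to a countable dense subset and then recover all other values through convexity; otherwise the argument is a clean combination of the martingale convergence theorem with the classical convex-analysis interchange of derivative and limit. The hypothesis $\beta<\beta_{cr}^{L^2}$ is used exclusively to ensure $W_\infty^{\beta'}>0$ on a full neighborhood of $\beta$, and one sees immediately that the same proof would extend to the full weak disorder phase $\beta<\beta_{cr}$ once positivity of $W_\infty^{\beta'}$ is known on a neighborhood, which is the improvement supplied by Theorem \ref{thmx:sj}.
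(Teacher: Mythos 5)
The paper does not prove Theorem~\ref{thmx:analytic}; it is quoted as a known result from \cite[Theorem~6.2]{CY06}, so there is no in-paper proof to compare against. Evaluated on its own terms, your argument is correct and is the standard convex-analysis route to this statement: observe that $\tfrac{1}{n}\mu_{\omega,n}^\beta[H_n]=\partial_\beta\bigl(\tfrac1n\log Z_n^\beta\bigr)$, show the convex functions $f_n=\tfrac1n\log Z_n^\beta$ converge pointwise a.s.\ to $\lambda$ on a neighborhood of $\beta$ (because $\tfrac1n\log W_n^{\beta'}\to 0$ a.s.\ when $W_\infty^{\beta'}>0$ a.s.), pass to a countable dense set to get a single full-measure event, and then invoke convergence of derivatives of convex functions at a point of differentiability of the limit. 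All of these steps are sound, and the derivative formula and the sandwich inequality are stated correctly.

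One small imprecision: you choose $\delta$ with $[\beta-\delta,\beta+\delta]\subset(-\beta_{cr}^{L^2},\beta_{cr}^{L^2})$, implicitly using $L^2$-boundedness of $W_n^{\beta'}$ for $\beta'<0$. But $\beta_{cr}^{L^2}$ is defined for $\beta'\geq 0$, and the $L^2$-condition for negative $\beta'$ involves the law of $-\omega_0$, which need not coincide. This is harmless as long as $\beta>0$: just take $\delta<\beta$ so that the neighborhood stays in $(0,\beta_{cr}^{L^2})$. (For $\beta=0$ the statement reduces to an elementary strong law for $\tfrac1n E^{\textup{SRW}}[H_n]$, handled separately.) Your closing remark that the argument extends to all of weak disorder once positivity of $W_\infty^{\beta'}$ on a neighborhood is available is exactly the kind of stability the present paper supplies in the bond/bounded setting via Theorem~\ref{thmx:sj} and Proposition~\ref{prop:drift}, so that observation is well taken.
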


To put this result into context, note that for $\beta>0$ the set of paths $\pi$ with $H_n(\omega,\pi)\approx n\lambda'(\beta)$ is exponentially small in the set of all paths, so that $\mu_{\omega,n}^\beta$ is supported on a negligibly small part of the support of $P$. Thus \eqref{eq:clt} holds on the diffusive scale despite the fact that paths under $\mu_{\omega,n}^\beta$ look locally quite different from paths under the unperturbed measure $P$.

\smallskip To close this discussion, we mention that much stronger results are available in the $L^2$-phase: in addition to Theorem~\ref{thmx:analytic}, it is known that convergence in probability in \eqref{eq:clt} can be replaced by almost sure convergence \cite{IS88,B89,SZ96}; precise error bounds for the rate of convergence to $W_\infty^\beta$ have been established \cite{CL17,CN20}; 
and a local central limit theorem has been proved \cite{S95,V06}. We also mention that much research has focused on the strong disorder phase \eqref{eq:SD}. In particular, the one-dimensional case is a very active field of research because of its conjectured relation to the KPZ universality class and because exactly solvable models are known. The behavior of $\mu_{\omega,n}^\beta$ in strong disorder is radically different from the weak disorder phase that is the focus of the present article.

\subsection{The main results}
\subsubsection{Central limit theorem for bond disorder}

We give a new proof for \eqref{eq:clt} in the interior of the weak disorder phase for bond disorder. Here, we consider $\mu_{\omega,n}^\beta$ as a random variable in $\M(\R^d)$, the set of probability measures on $\R^d$, equipped with the topology of weak convergence, which is induced by the Prokhorov metric $d_P$ on $\M(\R^d)$. As indicated above, our argument uses that weak disorder is stable under perturbation by a small drift, which is our first main result.

\begin{theorem}\label{thm:clt_prob}
Consider \eqref{itm:bond} in dimension $d\geq 3$, assume that the environment is bounded \eqref{eq:bd} and let $\beta<\beta_{cr}$. 
\begin{enumerate}
 \item[(i)] There exists $\alpha_0(\beta)>0$ such that weak disorder holds for all $\aalpha\in(\1/2d+[-\alpha_0,\alpha_0]^d)\cap\M(\U)$, i.e.,
 \begin{align*}
\P(W_\infty^{\beta,\aalpha}>0)=1\quad\text{ for all }\aalpha\in\big(\1/2d+[-\alpha_0,\alpha_0]^d\big)\cap\M(\U).
\end{align*}
\item[(ii)] Let $\mathcal N$ denote the standard normal distribution on $\R^d$. It holds that
\begin{align}
d_P\Big(\mu_{\omega,n}^\beta\Big(\frac{X_n}{\sqrt n}\in\cdot \Big),\mathcal N\Big)\xrightarrow[n\to\infty]P0.
\end{align}
\end{enumerate}
\end{theorem}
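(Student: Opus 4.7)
The plan rests on two pillars already in place: the noise operator comparison of Proposition \ref{prop:drift} and the $L^p$-moment bound of Theorem \ref{thmx:sj}.

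\emph{Part (i).} First I would fix $\beta_2\in(\beta,\beta_{cr})$, which is possible because $\beta<\beta_{cr}$ strictly. By Theorem \ref{thmx:sj} applied to the symmetric polymer, $\sup_n\|W_n^{\beta_2}\|_p<\infty$ for some $p>1$. I then invoke Proposition \ref{prop:drift} with $f(x)=x^p$: for each $\rho<1$ sufficiently close to one it produces a drift $\aalpha_\rho\in\M(\U)$, with $\aalpha_\rho\to\1/2d$ as $\rho\to 1$, satisfying
\begin{align*}
\E\bigl[(W_n^{\beta,\aalpha_\rho})^p\bigr]\;\leq\; \E\bigl[(T_\rho W_n^{\beta_2})^p\bigr]\;\leq\; \E\bigl[(W_n^{\beta_2})^p\bigr],
\end{align*}
where the second inequality follows from Jensen applied to the independent-resampling operator $T_\rho$. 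Running the argument for each coordinate direction and taking the intersection yields an $\ell^\infty$-neighborhood of $\1/2d$ on which $(W_n^{\beta,\aalpha})_n$ is uniformly $L^p$-bounded, hence uniformly integrable. Since $W_n^{\beta,\aalpha}$ is a non-negative martingale of mean one, this forces $\E[W_\infty^{\beta,\aalpha}]=1$, and the zero-one law \eqref{eq:zero_one} upgrades this to $\P(W_\infty^{\beta,\aalpha}>0)=1$.

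\emph{Part (ii).} My plan is to reduce the CLT to moment-generating-function convergence along a dense set of directions $\theta\in\R^d$. Tilting the simple random walk by $e^{\theta\cdot X_n/\sqrt n}$ produces an increment distribution $\aalpha_{\theta,n}\in\M(\U)$ with $\aalpha_{\theta,n}\to\1/2d$ as $n\to\infty$, and a direct computation gives
\begin{align*}
\mu_{\omega,n}^\beta\bigl[e^{\theta\cdot X_n/\sqrt n}\bigr]\;=\;\PSRW\bigl[e^{\theta\cdot X_n/\sqrt n}\bigr]\cdot\frac{W_n^{\beta,\aalpha_{\theta,n}}}{W_n^{\beta,\0}}.
\end{align*}
The first factor converges deterministically to the moment generating function of the random walk scaling limit, so the task reduces to showing that the ratio on the right tends to $1$ in probability. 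Since weak disorder at $\aalpha=\0$ (Theorem \ref{thmx:sj}) ensures $W_n^{\beta,\0}\to W_\infty^{\beta,\0}>0$ almost surely, the real goal is to prove $W_n^{\beta,\aalpha_{\theta,n}}\to W_\infty^{\beta,\0}$ in probability. For this I would use the uniform $L^p$-control from part (i) on a neighborhood $V$ of $\1/2d$: the martingales $(W_n^{\beta,\aalpha})_n$ are uniformly integrable uniformly in $\aalpha\in V$, and combining this with continuity of $\aalpha\mapsto W_\infty^{\beta,\aalpha}$ at $\aalpha=\1/2d$ gives the desired in-probability convergence. Continuity in $\aalpha$ itself I would extract from another application of \eqref{eq:noise}, sandwiching $W_n^{\beta,\aalpha}$ between $T_\rho$-noised versions of $W_n^{\beta,\0}$ with $\rho\to 1$ as $\aalpha\to\1/2d$. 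Once MGF convergence is established for a countable dense set of $\theta$, L\'evy's continuity theorem (applied for each fixed $\omega$ along a suitable subsequence) together with tightness yields Prohorov convergence in probability.

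\emph{Main obstacle.} I expect the hardest step to be the last one: controlling $W_n^{\beta,\aalpha_{\theta,n}}$ when the drift varies with $n$. Uniform integrability on $V$ provides $L^1$-closeness of $W_n^{\beta,\aalpha}$ and $W_\infty^{\beta,\aalpha}$ uniformly in $\aalpha\in V$, but closing the loop requires quantitative stability of the martingale limit under small perturbations of the drift. This is precisely where bond disorder is essential: the mechanism of Proposition \ref{prop:drift}, which trades noise-operator smoothing for a drift in the opposite direction, has no known analogue in the site-disorder setting, and this is why the theorem is restricted to \eqref{itm:bond}.
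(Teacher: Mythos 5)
Your overall strategy --- stabilize weak disorder over a neighborhood of $\1/2d$ via the noise-operator comparison, then reduce the CLT to moment-generating-function convergence by tilting the reference walk, with a subsequence/Borel--Cantelli argument at the end --- matches the paper's. Part (i) is essentially the paper's proof (Proposition~\ref{prop:drift}(i) with $\beta_+\in(\beta,\beta_{cr})$, $\aalpha_+=\1/2d$, Theorem~\ref{thmx:sj} at $\beta_+$), with the uniform-integrability-implies-weak-disorder step made explicit; the intermediate display involving $T_\rho W_n^{\beta_2}$ does not literally match the statement of Proposition~\ref{prop:drift}, but since you invoke that proposition the conclusion stands.

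For part (ii) there is a genuine gap in the pivotal step. You assert that uniform integrability of $(W_n^{\beta,\aalpha})_n$ over a neighborhood $V\ni\1/2d$ yields uniform $L^1$-closeness $\sup_{\aalpha\in V}\|W_n^{\beta,\aalpha}-W_\infty^{\beta,\aalpha}\|_1\to 0$. It does not: uniform integrability gives $L^1$-convergence of the martingale for each \emph{fixed} $\aalpha$, but says nothing about the \emph{rate}, which a priori can deteriorate as $\aalpha$ varies over $V$. Since in your tilted identity the drift $\aalpha_{\theta,n}=\aalpha(\theta/\sqrt n)$ moves with $n$, this uniform-in-$\aalpha$ rate is exactly what you need, and it is the content of Proposition~\ref{prop:drift}(ii), namely \eqref{eq:W_nW_infty}. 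Its proof is not a moment estimate: it rests on the coupling $(\omega_1,\omega_2)$ of Theorem~\ref{thm:semigroup}(iii), which realizes
\begin{align*}
W_n^{\beta,\aalpha}(\omega_1)=\E\big[(T_\rrho W_n^{\beta_+,\aalpha})(\omega_2)\,\big|\,\omega_1\big]
\end{align*}
simultaneously for all $n$, together with Lemma~\ref{lem:strange_coupling}, which rewrites $T_\rrho W_n^{\beta_+,\aalpha}$ as a mixture of $W_k^{\beta_+,\1/2d}$ over a random binomial time $k$; the binomial concentration then transports the $L^p$ rate at the single parameter $(\beta_+,\1/2d)$ uniformly onto the whole neighborhood. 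Relatedly, your idea of extracting continuity of $\aalpha\mapsto W_\infty^{\beta,\aalpha}$ by ``sandwiching'' $W_n^{\beta,\aalpha}$ between $T_\rho$-noised versions of $W_n^{\beta,\0}$ cannot work as stated: Theorem~\ref{thm:semigroup} gives only a one-sided convex-order inequality, the comparison is at a strictly \emph{higher} inverse temperature $\beta_+>\beta$ rather than at $\beta$ itself, and the noise operator acts on the environment with the drift held fixed; converting the noising into a change of drift is exactly the role of Lemma~\ref{lem:strange_coupling}, which your sketch does not supply.
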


While not stated in the literature, the proof of the CLT for site disorder Theorem~\ref{thmx:clt} can presumably be adapted to bond disorder. Theorem~\ref{thm:clt_prob} is potentially weaker because it only covers the interior of the weak disorder phase. However, it is widely expected \cite[Open Problem 3.2]{C17} that strong disorder holds at the critical temperature $\beta_{cr}$, i.e., that the weak disorder phase is open. We think that the argument can be extended to prove a full invariance principle in probability, see Remark~\ref{rm:invariance}, but since the result is not new we do not try to prove the most general statement.

\subsubsection{Central limit theorem for Poissonian disorder}\label{sec:as}

For the Brownian polymer model, there is no need to use perturbative arguments as in Theorem~\ref{thm:clt_prob} due to  a certain shift invariance, which we now explain. Namely, for $\omega\in\Omegap$ and $\llambda\in\R^d$, we define $\omega(\llambda)\in\Omegap$ by
\begin{align}\label{eq:omega_alpha}
(t,x,\eta)\in\omega\qquad\iff\qquad (t,x-t\llambda,\eta)\in\omega(\llambda).
\end{align}
Then $\omega(\llambda)$ has the same law as $\omega$ and one can check that $W_t^{\beta,\llambda}(\omega)=W_t^{\beta}(\omega(\llambda))$. In particular, the law of $W_t^{\beta,\llambda}$ does not depend on $\llambda$  and if \eqref{eq:WD} holds for one $\llambda$, then $\P(W_\infty^{\beta,\llambda'}>0)=1$ for \emph{all} $\llambda'\in\R^d$.

\smallskip We now use this shift invariance and the moment bound from Theorem~\ref{thmx:sj} to improve the convergence in probability from Theorem~\ref{thmx:clt} to almost sure  convergence.

\begin{theorem}\label{thm:clt_as_pois}
Consider \eqref{itm:pois}, assume that weak disorder \eqref{eq:WD} holds and that the environment is upper bounded \eqref{eq:upper_bd}. Let $\mathcal N$ denote the standard normal distribution on $\R^d$. Then, almost surely,
\begin{align*}
d_P\Big(\mu_{\omega,t}^{\beta}\Big(\frac{X_t}{\sqrt t}\in\cdot\Big),\mathcal N\Big)\xrightarrow{t\to\infty}0.
\end{align*}
\end{theorem}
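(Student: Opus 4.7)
The plan is to upgrade the convergence in probability from the Poissonian analog of Theorem~\ref{thmx:clt} (proved in \cite[Proposition~A.2]{CNN20}) to almost-sure convergence, using the shift invariance \eqref{eq:omega_alpha} and the $L^p$-bound of Theorem~\ref{thmx:sj}. Since $d_P$ metrizes weak convergence and $\mathcal N$ is determined by its moment generating function in a neighborhood of the origin, it suffices to prove that, almost surely and simultaneously for every $\llambda$ in a countable dense subset $D\subset\R^d$,
$$
\mu_{\omega,t}^\beta\!\left[e^{\llambda\cdot B_t/\sqrt t}\right]\xrightarrow[t\to\infty]{} e^{\|\llambda\|^2/2},
$$
together with almost-sure tightness of $\{\mu_{\omega,t}^\beta(B_t/\sqrt t\in\cdot)\}_{t\ge0}$, which will follow from a uniform second-moment bound extracted from the displayed convergence.

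By Girsanov applied to the Brownian motion, $P^{\llambda/\sqrt t}(dB)=\exp(\llambda\cdot B_t/\sqrt t-\|\llambda\|^2/2)\PBM(dB)$ on $\F_t$, hence
$$
\mu_{\omega,t}^\beta\!\left[e^{\llambda\cdot B_t/\sqrt t}\right]=e^{\|\llambda\|^2/2}\cdot\frac{W_t^{\beta,\llambda/\sqrt t}(\omega)}{W_t^\beta(\omega)}.
$$
Under \eqref{eq:WD}, \eqref{eq:upper_bd}, and the zero-one law \eqref{eq:zero_one}, Theorem~\ref{thmx:sj} gives $W_t^\beta\to W_\infty^\beta>0$ almost surely and in $L^p$ for some $p>1$. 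By the shift-invariance identity $W_t^{\beta,\llambda/\sqrt t}(\omega)=W_t^\beta(\omega(\llambda/\sqrt t))$, the numerator has the same law as $W_t^\beta$ and thus also satisfies the $L^p$-bound uniformly in $t$ and $\llambda$. Combined with the convergence in probability, uniform integrability yields $W_t^{\beta,\llambda/\sqrt t}/W_t^\beta\to1$ in $L^q$ for any $q\in(1,p)$.

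The remaining task is to upgrade this $L^q$-convergence to almost-sure convergence. I would proceed in two steps: (i)~apply Markov's inequality and Borel--Cantelli along a sparse deterministic subsequence $t_n:=2^n$ (passing to a sub-subsequence if necessary so that the $L^q$-errors are summable in $n$) to obtain $W_{t_n}^{\beta,\llambda/\sqrt{t_n}}/W_{t_n}^\beta\to1$ almost surely along $n$; (ii)~interpolate to all $t\in[t_n,t_{n+1}]$. Step~(ii) is the \emph{main obstacle}: since the drift $\llambda/\sqrt t$ depends on $t$, the process $(W_t^{\beta,\llambda/\sqrt t})_{t\ge0}$ is not a martingale and one cannot apply Doob's maximal inequality directly. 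My strategy is to combine Doob's inequality for the fixed-drift martingale $W_\cdot^{\beta,\llambda/\sqrt{t_n}}$ on the block $[t_n,t_{n+1}]$ with a uniform Lipschitz bound for the map $\mu\mapsto W_t^{\beta,\mu}$ in $L^p$-norm, obtained by differentiating in the drift under the expectation and again invoking Theorem~\ref{thmx:sj} for the resulting gradient partition functions (whose $L^p$-control transfers from that of $W_t^{\beta,\mu}$ by shift invariance). A diagonal argument over the countable set~$D$ then yields a common null exceptional set, and the uniform second-moment bound on $B_t/\sqrt t$ under $\mu_{\omega,t}^\beta$ implied by the MGF convergence at small~$\llambda$ provides almost-sure tightness, completing the identification of the limit as $\mathcal N$.
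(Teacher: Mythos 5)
Your opening reductions match the paper's starting point: by Girsanov,
\[
\mu_{\omega,t}^\beta\big[e^{\llambda\cdot B_t/\sqrt t}\big]=e^{\|\llambda\|^2/2}\cdot\frac{W_t^{\beta,\llambda/\sqrt t}(\omega)}{W_t^\beta(\omega)},
\]
and the problem becomes showing $W_t^{\beta,\llambda/\sqrt t}\to W_\infty^\beta$ almost surely. You also correctly identify the core obstruction: $(W_t^{\beta,\llambda/\sqrt t})_{t\ge 0}$ is not a martingale because the drift depends on $t$. However, your proposed fix — dyadic blocks $[t_n,t_{n+1}]$ combined with an $L^p$-Lipschitz bound on $\mu\mapsto W_t^{\beta,\mu}$ — does not close the gap. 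The derivative $\partial_{\mu_i}W_t^{\beta,\mu}=E^\mu\big[(B^i_t-t\mu_i)\,e^{\beta H_t-t(e^{\lambda(\beta)}-1)}\big]$ has $L^p$-norm of order $\sqrt t$ (the factor $B_t-t\mu$ is of order $\sqrt t$), while the variation of the drift within a block is $|\llambda/\sqrt t-\llambda/\sqrt{t_n}|=O(1/\sqrt{t_n})$. The product is $O(1)$, not $o(1)$, so the interpolation error does not vanish and the Borel--Cantelli argument cannot be completed. This is not a technicality: the drift scale $1/\sqrt t$ is exactly the scale at which $W_t^{\beta,\cdot}$ has order-one fluctuations, so pointwise continuity estimates are at the boundary of failure, and indeed the paper does not prove pointwise a.s. convergence of $W_t^{\beta,\llambda/\sqrt t}$ for the single drift $\llambda=\0$ directly.

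The paper's proof uses a structurally different mechanism to sidestep this. It replaces the MGF of $\mu_{\omega,t}^\beta(B_t/\sqrt t\in\cdot)$ by that of a smoothed measure $\nnu_t^\eps$ (the law of $X*(\1+Y)$ with $Y\sim\mathrm{Unif}([0,\eps]^d)$, Lemma~\ref{lem:weak_conv}), and shows in \eqref{eq:herehereher} that the smoothed MGF is an \emph{average} of $W_t^{\beta,\llambda+\mmu}/W_t^{\beta,\llambda}$ over a box $E_t$ of drifts, with $|E_t|\asymp t^{-d/2}$. The a.s.\ convergence of this average to $W_\infty^{\beta,\llambda}/W_\infty^{\beta,\llambda}$ for Lebesgue-a.e.\ $\llambda$ is then obtained from the Lebesgue differentiation theorem for nicely shrinking sets (Theorem~\ref{thmx:rudin}) applied to the tail-supremum $S_T(\omega,\llambda)=\sup_{s,t\ge T}|W_s^{\beta,\llambda}-W_t^{\beta,\llambda}|$, whose integrability in $\llambda$ comes from Fubini, shift invariance and \eqref{eq:sup_integrable} — this is Lemma~\ref{lem:mgf_as}. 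Finally, the ``defective'' result (a.s.\ CLT for Lebesgue-a.e.\ drift, Proposition~\ref{prop:defect}) is promoted to the drift $\0$ by the Fubini/shift-invariance argument at the start of Section~\ref{sec:proof_clt_as}. Your proposal uses shift invariance only to get uniform $L^p$-bounds, not as the measure-theoretic device for transferring from almost-every $\llambda$ to $\llambda=\0$, and it misses both the smoothing of the endpoint measure and the Lebesgue-point argument, which are precisely the ingredients that make the interpolation work. As written, the proposal therefore contains a genuine gap at the interpolation step.
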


Our proof does not work for the bond disorder model because shift invariance does not hold, but we nevertheless obtain a certain ``defective'' version of Theorem~\ref{thm:clt_as_pois}. To state it, we define $\aalpha(\llambda)\in\M(\U)$ by
\begin{align}\label{eq:alpha_lambda}
(\aalpha(\llambda))_u=\frac{e^{\llambda_u}}{\sum_{v\in\U}e^{\llambda_{v}}}.
\end{align}

\begin{theorem}\label{thm:clt_as_bond}
Consider \eqref{itm:bond} and assume that the environment is bounded \eqref{eq:bd}. Let $\beta<\beta_{cr}$ and let $\lambda_0>0$ be small enough that $\aalpha(\llambda)\in \1/2d+[-\alpha_0,\alpha_0]^d$ for all $\llambda\in[-\lambda_0,\lambda_0]^d$, where $\alpha_0$ is as defined in Theorem~\ref{thm:clt_prob}~(i). For $\P$-almost all $\omega$ there exists a Borel set $\Lambda(\omega)\subseteq [-\lambda_0,\lambda_0]^d$ with $|\Lambda(\omega)|=(2\lambda_0)^d$ such that, for all $\llambda\in \Lambda(\omega)$,
\begin{align*}
d_P\Big(\mu_{\omega,n}^{\beta,\aalpha(\llambda)}\Big(\frac{X_n-\aalpha(\llambda) n}{\sqrt n}\in\cdot\Big),\mathcal N\Big)\xrightarrow{n\to\infty}0.
\end{align*}
\end{theorem}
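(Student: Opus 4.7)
My plan is to combine Theorem \ref{thm:clt_prob} (which gives convergence in $\P$-probability for each fixed drift) with a Fubini argument that exploits the analytic structure of the polymer partition function in the drift parameter $\llambda$.

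First, I observe that the proof of Theorem \ref{thm:clt_prob}(ii) uses only that weak disorder holds in a neighborhood of the base drift. By Theorem \ref{thm:clt_prob}(i) and the choice of $\lambda_0$, such a neighborhood exists around every $\aalpha(\llambda)$ with $\llambda\in[-\lambda_0,\lambda_0]^d$, so part (ii) extends to give, for each fixed $\llambda\in(-\lambda_0,\lambda_0)^d$, that $d_P\bigl(\mu_{\omega,n}^{\beta,\aalpha(\llambda)}((X_n-n\aalpha(\llambda))/\sqrt n\in\cdot),\mathcal N\bigr)\to 0$ in $\P$-probability as $n\to\infty$. Moreover, for each such $\llambda$, the martingale $W_n^{\beta,\aalpha(\llambda)}$ converges a.s.\ to a strictly positive limit $W_\infty^{\beta,\aalpha(\llambda)}$, so Fubini produces a single full-$\P$-measure set of $\omega$ on which $W_n^{\beta,\aalpha(\llambda)}(\omega)\to W_\infty^{\beta,\aalpha(\llambda)}(\omega)>0$ for Lebesgue-a.e.\ $\llambda$.

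Next, I would pass to the entire function
\[
G_n(\llambda):=E\bigl[e^{\llambda\cdot X_n}\,e^{\beta H_n-n\lambda(\beta)}\bigr]=c(\llambda)^n\,W_n^{\beta,\aalpha(\llambda)},\qquad c(\llambda):=\tfrac{1}{2d}\sum_{v\in\U}e^{\llambda\cdot v},
\]
where $E$ denotes the SRW expectation. The recentered characteristic function then admits the representation
\[
\mu_{\omega,n}^{\beta,\aalpha(\llambda)}\bigl(e^{i\theta\cdot(X_n-n\aalpha(\llambda))/\sqrt n}\bigr)=e^{-i\sqrt n\,\theta\cdot\aalpha(\llambda)}\,\frac{G_n(\llambda+i\theta/\sqrt n)}{G_n(\llambda)},
\]
so a second-order Taylor expansion of $\log G_n$ reduces the CLT for a.e.\ $\llambda$, a.s., to two estimates: (a) the bias $(\nabla\log G_n(\llambda)-n\aalpha(\llambda))/\sqrt n\to 0$, and (b) the variance $\tfrac{1}{n}\nabla^2\log G_n(\llambda)$ converges to a limiting covariance. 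Since $\log G_n$ is convex and real-analytic in $\llambda\in\R^d$, pointwise a.s.\ convergence of $\tfrac{1}{n}\log G_n\to\log c$ on a countable dense set (a direct consequence of the first step) combined with convexity and Rockafellar's theorem on convergence of subdifferentials gives $\tfrac{1}{n}\nabla\log G_n(\llambda)\to\aalpha(\llambda)$, which is the law of large numbers $\mu_{\omega,n}^{\beta,\aalpha(\llambda)}[X_n]/n\to\aalpha(\llambda)$. For the sharper statements (a) and (b), I would combine the $L^p$-boundedness from Theorem \ref{thmx:sj} with Cauchy estimates on the entire function $G_n$ in a shrinking complex disk around real $\llambda$: this transfers a.s.\ convergence at rational $\llambda$ to a.s.\ convergence of the first two derivatives at Lebesgue-a.e.\ $\llambda$. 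Intersecting the resulting full-measure $\llambda$-sets over a countable dense family of $\theta$ then defines $\Lambda(\omega)$.

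The main obstacle is precisely this last step. Convexity yields the LLN easily, but neither the Hessian convergence (b) nor the $o(\sqrt n)$-refinement of the bias (a) follows from convex analysis alone; both require a quantitative interplay between the analyticity of $G_n$ and the $L^p$-bounds on the martingales supplied by Theorem \ref{thmx:sj}. The exceptional set of $\llambda$ inherent in Rockafellar's theorem and in the Fubini extraction is also what forces the ``defective'' a.e.-$\llambda$ formulation, in contrast to the Brownian case of Theorem \ref{thm:clt_as_pois} where shift invariance eliminates this obstruction.
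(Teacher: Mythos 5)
Your setup is sound and you correctly identify the obstruction — one needs almost-sure control of the martingale along the diagonal $\llambda'=\llambda+\mmu/\sqrt n$, which Fubini alone cannot deliver, and shift invariance is unavailable for bond disorder — but the route you then sketch (second-order Taylor expansion of $\log G_n$, Rockafellar for the gradient, Cauchy estimates plus $L^p$-bounds for the Hessian and the $o(\sqrt n)$ bias) is not carried out, and you acknowledge as much. This is a genuine gap, not a stylistic difference. The trouble is that Cauchy estimates on the entire function $G_n$ in a disk of radius $r$ only give derivative bounds of order $r^{-k}\sup_{|\zeta-\llambda|\le r}|G_n|$; feeding in the $L^p$-bound on $W_n$ controls the size of $G_n$ but not the oscillation of its second derivative at the scale $r\sim n^{-1/2}$ needed for (b), and convexity alone, via Rockafellar, transfers only to first-order information after dividing by $n$. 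Without a mechanism to quantify the $\omega$-dependent fluctuation of $W_n^{\beta,\aalpha(\llambda')}$ as $\llambda'$ sweeps a shrinking window, neither (a) nor (b) closes.

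The paper's proof avoids derivatives entirely. It mollifies the moment generating function: instead of $W_n^{\beta,\llambda+\mmu/\sqrt n}/W_n^{\beta,\llambda}$ at a fixed complex shift, it works with the average $\frac{1}{|E_n|}\int_{\llambda+E_n}W_n^{\beta,\llambda'}\dd\llambda'$ over a window $E_n=E/\sqrt n$ shrinking nicely to $\llambda$. The key observation is that such integral averages are exactly what the Lebesgue differentiation theorem (Theorem~\ref{thmx:rudin}) controls — but applied not to the martingale itself, rather to the oscillation function $S_N(\omega,\llambda)=\sup_{n,m\geq N}|W_n^{\beta,\aalpha(\llambda)}-W_m^{\beta,\aalpha(\llambda)}|$. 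Two ingredients substitute for shift invariance: Proposition~\ref{prop:drift}(i) together with Doob's inequality yields $\E[\int_{[-\lambda_0,\lambda_0]^d}\sup_n W_n^{\beta,\aalpha(\llambda)}\dd\llambda]<\infty$ (the analog of \eqref{eq:more_complicated}), and the uniform $L^1$ control \eqref{eq:W_nW_m} replaces \eqref{eq:rot} to show $\E[\int S_N\dd\llambda]\to 0$, hence $S_N(\omega,\llambda)\to 0$ for $\P\otimes\operatorname{Leb}$-a.e.\ $(\omega,\llambda)$. Applying Theorem~\ref{thmx:rudin} to $\llambda\mapsto S_k(\omega,\llambda)$ then gives, at Lebesgue points, the convergence of the averaged MGF ratio to $e^{\frac12\|\mmu\|^2}$, and Lemma~\ref{lem:weak_conv} converts this mollified moment-generating-function convergence back into ordinary weak convergence of the endpoint law. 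If you want to salvage your plan, you should drop the Taylor/Cauchy machinery and replace it with this averaging-plus-Lebesgue-differentiation step; that is precisely the mechanism that quantifies the window-averaged fluctuation you were missing.
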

That is, an almost sure central limit theorem holds for Lebesgue almost all directions $\aalpha\in(\1/2d+[-\alpha_0,\alpha_0]^d)\cap\M(\U)$. Of course, this does not exclude the possibility that $\P\left(\0\notin \Lambda(\omega)\right)>0$ and hence we cannot say anything about the symmetric case $\llambda=\0$. In the case of \eqref{itm:pois}, we use the shift invariance to exclude the possibility that $\0$ is atypical.

\subsubsection{Large deviation principle for bond disorder}\label{sec:ldp}

We present a second application for the perturbative argument used to prove Theorem~\ref{thm:clt_prob}. To motivate it, we briefly discuss the so-called \emph{curvature conjecture}. First, it is known that the polymer endpoint satisfies a large deviation principle (LDP).

\begin{thmx}[{\cite[Theorem 1.2]{CH04},\cite[Theorem 9.1]{C17}}]\label{thmx:free}
Consider \eqref{itm:site} or \eqref{itm:bond} in arbitrary dimension and for any $\beta\geq 0$, $\aalpha\in\M(\U)$. 
\begin{enumerate}
 \item[(i)] For any $x\in\R^d$ with $\|x\|_1\leq 1$, let $(x_n)_{n\in\N}$ be a sequence in $\Z^d$ such that $\lim_{n\to\infty}\frac{x_n}n=x$ and $P^\aalpha(X_n=x_n)>0$. There exist $\p(\beta,\aalpha),\p(\beta,x)\in(-\infty,0]$ such that, almost surely,
 \begin{align}
\p(\beta,\aalpha)&=\lim_{n\to\infty}\frac 1n\log W_n^{\beta,\aalpha} =\lim_{n\to\infty}\frac 1n\E[\log W_n^{\beta,\aalpha}],\label{eq:free}\\
\p(\beta,x)&=\lim_{n\to\infty}\frac 1n \log E[e^{\beta H_n(\omega,X)-n\lambda(\beta)}\ind_{X_n=x_n}] \notag\\&=\lim_{n\to\infty}\frac 1n \E\left[\log E[e^{\beta H_n(\omega,X)-n\lambda(\beta)}\ind_{X_n=x_n}]\right]\label{eq:free_p2p}.
\end{align}
Moreover, $\p(\beta,x)$ does not depend on the choice of $(x_n)_{n\in\N}$.
\item[(ii)] The sequence $(\mu_{\omega,n}^\beta(X_n/n\in\cdot ))_{n\geq 0}$ almost surely satisfies an LDP with deterministic, convex and continuous rate function
\begin{align*}
J^\beta(x)\coloneqq \p(\beta,\1/2d)-\p(\beta,x).
\end{align*}
\end{enumerate}
\end{thmx}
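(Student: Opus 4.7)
My plan follows the standard subadditivity and concentration framework for the free energy, then derives the LDP from concavity of the point-to-point limit.

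For the free energy $\p(\beta,\aalpha)$ in \eqref{eq:free}, the Markov property at time $n$ gives the decomposition
\begin{align*}
W_{n+m}^{\beta,\aalpha}(\omega)=\sum_{x\in\Z^d}q_n^\beta(x,\omega)\,W_m^{\beta,\aalpha}(\theta_{n,x}\omega),
\end{align*}
where $q_n^\beta(x,\omega):=e^{-n\lambda(\beta)}E^\aalpha[e^{\beta H_n}\ind_{X_n=x}]$ sums to $W_n^{\beta,\aalpha}(\omega)$ and $\theta_{n,x}\omega$ is independent of $(q_n^\beta(\cdot,\omega))_x$ and equidistributed with $\omega$. Dividing by $W_n^{\beta,\aalpha}$ and applying Jensen to $\log$ yields the annealed superadditivity $\E[\log W_{n+m}^{\beta,\aalpha}]\geq \E[\log W_n^{\beta,\aalpha}]+\E[\log W_m^{\beta,\aalpha}]$; together with $\E[\log W_n^{\beta,\aalpha}]\leq\log\E[W_n^{\beta,\aalpha}]=0$ and finiteness under \eqref{eq:exp_mom}, Fekete's lemma identifies the limit $\p(\beta,\aalpha)\in(-\infty,0]$. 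Almost sure convergence then follows from concentration of $\log W_n^{\beta,\aalpha}$ around its mean at scale $o(n)$ (Azuma--Hoeffding on the edge-indexed Doob martingale under \eqref{eq:bd}, or refined entropy/Poincaré estimates under just \eqref{eq:exp_mom}), together with Borel--Cantelli.

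For the point-to-point limit in \eqref{eq:free_p2p}, I would restrict the above decomposition to pinned endpoints and concatenate along the ray through $x$: whenever $x=x^{(1)}+x^{(2)}$ is reachable in $n_1+n_2$ steps,
\begin{align*}
q_{n_1+n_2}^\beta(x,\omega)\geq q_{n_1}^\beta(x^{(1)},\omega)\,q_{n_2}^\beta(x^{(2)},\theta_{n_1,x^{(1)}}\omega).
\end{align*}
Taking expectations of logs gives annealed superadditivity along rational rays (modulo the $\Z^d$-parity of the simple random walk), and Fekete plus the same concentration estimate deliver the limit for rational $x$. Concavity of $x\mapsto\p(\beta,x)$, inherited from path concatenation between arbitrary endpoints, extends convergence continuously to all $x$ with $\|x\|_1\leq 1$ and shows independence of the approximating sequence $(x_n)$.

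For the LDP, write $\mu_{\omega,n}^\beta(X_n=y)=q_n^\beta(y,\omega)/W_n^\beta(\omega)$; the lower bound over open $G$ is immediate from the pointwise convergences above. The upper bound over closed $F$ reduces, via the polynomial count $|\{y\in\Z^d:y/n\in F\}|=O(n^d)$, to the uniform estimate
\begin{align*}
\sup_{y\in\Z^d:\,y/n\in F}\bigl|\tfrac{1}{n}\log q_n^\beta(y,\omega)-\p(\beta,y/n)\bigr|\xrightarrow{n\to\infty}0\quad\text{almost surely},
\end{align*}
from which the LDP with rate $J^\beta(x)=\p(\beta,\1/2d)-\p(\beta,x)$ follows; convexity and continuity of $J^\beta$ inherit from the corresponding properties of $\p(\beta,\cdot)$. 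The main obstacle is precisely this uniformization: the pointwise concentration produces one null set per direction, and these must be controlled simultaneously across the uncountable family of directions $y/n\in F$. The standard remedy is to exploit concavity of $\p(\beta,\cdot)$ to reduce to a countable dense set of rational directions, yielding a single null set, and then interpolate by a finite-cover argument; extra care is needed near the boundary $\|x\|_1=1$, where accessible lattice points become sparse.
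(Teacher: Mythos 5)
The paper does not prove Theorem~\ref{thmx:free} at all: it is stated as a cited result from \cite{CH04,C17}, with only the remark that the adaptations to bond disorder and to a general drift $\aalpha$ are straightforward modifications of the arguments in those references. Your sketch retraces that standard argument (Markov decomposition plus Jensen to obtain superadditivity of $\E[\log W_n^{\beta,\aalpha}]$, Fekete's lemma, exponential concentration of $\log W_n^{\beta,\aalpha}$ around its mean, concatenation superadditivity for the pinned partition function, and concavity/uniformization for the LDP upper bound), so it is essentially the same route. One small but substantive caution on the concentration step: the Doob martingale should be taken with respect to the filtration that reveals the environment one \emph{time slice} at a time, not one edge at a time. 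Revealing individual edges produces a martingale with on the order of $n^{d+1}$ increments, so Azuma at scale $\eps n$ gives $\exp(-c\eps^2 n^2/n^{d+1})$, which is not summable for $d\geq 1$; revealing entire time slices gives exactly $n$ increments, each bounded by a constant under \eqref{eq:bd} (and controllable by entropy/Poincar\'e methods under \eqref{eq:exp_mom}), yielding deviation probability $\exp(-c\eps^2 n)$ and hence Borel--Cantelli. The rest of the sketch, including the caveat about uniformizing across directions via concavity and handling the sparse boundary $\|x\|_1=1$, is consistent with the cited proofs.
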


Similar to our convention $W_n^{\beta}=W_n^{\beta,\1/2d}$, we will write $\p(\beta)$ in place of $\p(\beta,\1/2d)$. The quantity in \eqref{eq:free}, resp. \eqref{eq:free_p2p}, is known as the point-to-plane, resp. point-to-point, \emph{free energy}. It is commonly defined with $W_n^{\beta,\aalpha}$ replaced by the partition function $Z_n^{\beta,\aalpha}$, but this only leads a shift by $\lambda(\beta)$. It seems that Theorem~\ref{thmx:free} is only available for site disorder in the symmetric case $\aalpha=\1/2d$ in the literature, but the necessary modifications are straightforward by following the arguments from \cite{CH04,C17}.

\smallskip The curvature conjecture \cite[Open Problem 9.3]{C17} states that $x\mapsto J^\beta(x)$ is strictly convex, and in particular that $J^\beta(x)>0$ for all $x\neq 0$. In strong disorder, the conjecture is related to certain fluctuation exponents, which are expected to follow the KPZ scaling relation, and it would resolve long-standing questions about the long-term behavior of the polymer. For example, without assuming strict convexity of $J^\beta$ it is currently not even known whether a law of large numbers holds  in strong disorder, i.e., whether 
\begin{align*}
\lim_{n\to\infty}\mu_{\omega,n}^\beta\big(|X_n|>\eps n\big)=0.
\end{align*}
In this section, we show that in the interior of the weak disorder phase $J^\beta$ agrees with the rate function of the unperturbed model $\PSRW$ in some neighborhood of the origin. This result is of more limited interest than the curvature conjecture in strong disorder, because both the scaling exponents and the law of large numbers are already known, but we think that the ideas are potentially relevant in strong disorder as well. Namely, in Proposition~\ref{prop:drift_free} below we prove a comparison between the free energies $\p(\beta,\aalpha)$ and $\p(\beta,\aalpha')$ with different drifts $\aalpha,\aalpha'\in\M(\U)$, which should be relevant because an LDP is typically proved by exponentially tilting the original measure. An interesting observation is that in this context bond disorder seems to be easier to analyze than site disorder. 

\smallskip The function $\beta\mapsto \p(\beta)$ is known to be continuous and decreasing, so there exists $\overline\beta_{cr}$ such that $\p(\beta)=0$ if and only if $\beta\leq\overline{\beta}_{cr}$. Clearly $\beta_{cr}\leq \overline{\beta}_{cr}$, and it is believed that the two critical values are in fact equal \cite[Open Problem 3.2]{C17}. We now identify the LDP rate function $J^\beta$ for $\beta<\overline\beta_{cr}$.

\begin{theorem}\label{thm:ldp}
Consider \eqref{itm:bond} in dimension $d\geq 3$ and assume that the environment is bounded \eqref{eq:bd}. Then the following hold.
\begin{enumerate}
 \item[(i)] Let $\beta<\overline\beta_{cr}$. There exists $\alpha_0>0$ such that $\p(\beta,\aalpha)=0$ for all $\aalpha\in(\1/2d+[-\alpha_0,\alpha_0]^d)\cap\M(\U)$.
 \item[(ii)] Let $\beta<\overline\beta_{cr}$. The rate function $J^\beta$ agrees with the rate function $I^{\textup{SRW}}$ of $\PSRW$ in a neighborhood of the origin.
 \item[(iii)] For all $\beta\leq\overline\beta_{cr}$ and all $x\in\R^d$, $J^{\beta}(x)\geq I(x)$.
\end{enumerate}
\end{theorem}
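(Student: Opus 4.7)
The plan is to address the three parts in the order (iii), (i), (ii): (iii) is an elementary annealing (Jensen) bound, (i) is the main technical step and invokes Proposition \ref{prop:drift_free}, and (ii) will follow from (i) by convex duality.

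For (iii), I fix $x$ with $\|x\|_1\le 1$ and a sequence $x_n$ with $x_n/n\to x$ as in \eqref{eq:free_p2p}. Fubini and the independence of the bond disorders give
\begin{align*}
\E\bigl[E^{\text{SRW}}\bigl[e^{\beta H_n-n\lambda(\beta)}\ind_{X_n=x_n}\bigr]\bigr]=P^{\text{SRW}}(X_n=x_n),
\end{align*}
so Jensen's inequality applied to $\log$ yields
\begin{align*}
\E\bigl[\log E^{\text{SRW}}\bigl[e^{\beta H_n-n\lambda(\beta)}\ind_{X_n=x_n}\bigr]\bigr]\le\log P^{\text{SRW}}(X_n=x_n).
\end{align*}
Dividing by $n$ and letting $n\to\infty$, the left-hand side tends to $\p(\beta,x)$ by \eqref{eq:free_p2p} while Cramér's theorem identifies the right-hand side with $-I(x)$. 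Hence $\p(\beta,x)\le -I(x)$, and combined with $\p(\beta)=0$ (since $\beta\le\overline{\beta}_{cr}$) this is $J^\beta(x)\ge I(x)$.

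For (i), the upper bound $\p(\beta,\aalpha)\le 0$ is immediate from Jensen applied to the mean-one martingale $W_n^{\beta,\aalpha}$, so the real task is the matching lower bound $\p(\beta,\aalpha)\ge 0$. I pick $\beta'\in(\beta,\overline{\beta}_{cr})$ so that $\p(\beta')=0$, and then invoke Proposition \ref{prop:drift_free}, which translates the noise-operator inequality \eqref{eq:noise} into a free-energy comparison of the form
\begin{align*}
\p(\beta,\aalpha)\ge \p(\beta')
\end{align*}
valid for every $\aalpha$ in a suitable neighborhood of $\1/2d$ (depending on $\beta,\beta'$ and the noise parameter $\rho$). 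The underlying mechanism is specific to bond disorder: partially integrating out the disorder on the edges pointing in a fixed direction can be identified with introducing a drift, so that the noise operator lets us ``trade'' a small increase of inverse temperature for a small drift. Deriving the \emph{lower} bound above (as opposed to the trivial upper bound from the martingale property) is the hard part of the proof and is where I expect the main obstacle to lie. Since $\p(\beta')=0$, we obtain $\p(\beta,\aalpha)\ge 0$, hence $\p(\beta,\aalpha)=0$, on this neighborhood.

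For (ii), I start from the change-of-measure identity
\begin{align*}
W_n^{\beta,\aalpha(\llambda)}=e^{-n\Lambda(\llambda)}\,W_n^\beta\,\mu_{\omega,n}^\beta\bigl[e^{\llambda\cdot X_n}\bigr],
\end{align*}
where $\Lambda(\llambda)=\log\bigl(\tfrac{1}{2d}\sum_{v\in\U}e^{\llambda_v}\bigr)$ is the log-MGF of the SRW step. Taking logs, dividing by $n$, and applying Varadhan's lemma to the LDP from Theorem \ref{thmx:free}(ii) (with rate $J^\beta$) gives in the limit
\begin{align*}
\p(\beta,\aalpha(\llambda))=-\Lambda(\llambda)+\p(\beta)+(J^\beta)^*(\llambda).
\end{align*}
By (i) and $\p(\beta)=0$, the left-hand side vanishes on a neighborhood $U$ of $\llambda=0$, so $(J^\beta)^*=\Lambda$ on $U$. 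Since $J^\beta$ is convex and lower semicontinuous (Theorem \ref{thmx:free}(ii)) we have $J^\beta=(J^\beta)^{**}$; combined with the global bound $(J^\beta)^*\le\Lambda$ from (iii) and the equality on $U$, the standard Legendre-duality argument shows $J^\beta(x)=I(x)$ for every $x$ of the form $\nabla\Lambda(\llambda)$ with $\llambda\in U$. Since $\nabla\Lambda$ is a local diffeomorphism at $0$ (its Hessian there is the non-degenerate SRW increment covariance), these $x$ fill out a neighborhood of the origin, which completes the proof.
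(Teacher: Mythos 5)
Your proposal is correct, and for parts (i) and (ii) it follows essentially the same route as the paper: part (i) is exactly the application of Proposition \ref{prop:drift_free} with a slightly larger inverse temperature $\beta'<\overline\beta_{cr}$ at which $\p(\beta')=0$ (the paper adds one clarifying step you only allude to, namely invoking Theorem \ref{thm:semigroup}(ii) to get $\rho_0^{\beta_-,\beta_+}(\beta,\beta')<1$, which together with continuity of $\aalpha\mapsto d(\aalpha,\1/2d)$ is what produces a nontrivial box $\1/2d+[-\alpha_0,\alpha_0]^d$); and part (ii) is the same Legendre--duality argument that the paper compresses into a citation of the G\"artner--Ellis theorem, applied to the identity $\lim_n\frac 1n\log\mu^\beta_{\omega,n}[e^{\llambda\cdot X_n}]=\Lambda(\llambda)+\p(\beta,\aalpha(\llambda))-\p(\beta)$. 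Your statement ``$\p(\beta,\aalpha)\geq\p(\beta')$'' is not literally what Proposition \ref{prop:drift_free} gives (it gives $\p(\beta,\aalpha)\geq m(\aalpha\mid\1/2d)\,\p(\beta')$), but since $\p(\beta')=0$ the conclusion is unaffected.

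The one place you genuinely diverge from the paper is part (iii). The paper obtains $J^\beta\geq I^{\textup{SRW}}$ as a corollary of the same cumulant-generating-function identity used for (ii): since $\p(\beta,\aalpha)\leq 0$ always and $\p(\beta)=0$ for $\beta\leq\overline\beta_{cr}$, that identity gives $\lim_n\frac1n\log\mu^\beta_{\omega,n}[e^{\llambda\cdot X_n}]\leq\Lambda(\llambda)$ for all $\llambda$, and G\"artner--Ellis then yields the inequality on rate functions. You instead apply Jensen directly to the quenched point-to-point quantity, using $\E\bigl[E[e^{\beta H_n-n\lambda(\beta)}\ind_{X_n=x_n}]\bigr]=P^{\textup{SRW}}(X_n=x_n)$, to get $\p(\beta,x)\leq -I^{\textup{SRW}}(x)$ and hence $J^\beta(x)=\p(\beta)-\p(\beta,x)\geq I^{\textup{SRW}}(x)$. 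Both are valid; your version is the classical ``annealed upper bound on the quenched free energy'' and is arguably more elementary and self-contained, since it avoids Varadhan/G\"artner--Ellis entirely, whereas the paper's version is more uniform with its treatment of (ii) because it reuses the same identity. Either presentation would be acceptable.
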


\begin{remark}
\begin{enumerate}
 \item[(i)] We think that Theorem~\ref{thm:ldp} is sharp in the sense that $J^\beta(x)>I^{\textup{SRW}}(x)$ for all $\beta>\beta_{cr}$ and $x\neq 0$. Note that this would imply the curvature conjecture discussed above.
 \item[(ii)] It is not hard to see that 
 \begin{align*}
J^\beta(e_1)=\p(\beta)-\beta\E[\omega_0]+\lambda(\beta)+I^{\textup{SRW}}(e_1),
\end{align*}
 so by continuity we have $J^\beta>I^{\textup{SRW}}$ in some neighborhood of $e_1$. In other words, the disorder irrelevance close to the origin from Theorem~\ref{thm:ldp}~(ii) does not extend to the whole domain.
\end{enumerate}
\end{remark}

The result is inspired by \cite[Exercise 9.1]{C17}, where the same conclusion is obtained for $\beta<\beta_{cr}^{L^2}$ using the so-called replica trick. Recently \cite{FJ21}, we proved an analogue of Theorem~\ref{thm:ldp} for a related model in continuous time and discrete space, which is parametrized in terms of the jump rate $\kappa$ of the underlying random walk instead of the inverse temperature $\beta$. In place of Theorem~\ref{thm:semigroup}, we used the comparison result \cite[Theorem~1]{J20} to ``integrate out'' the drift at the ``cost'' of decreasing the jump rate $\kappa$, but this idea is not easily adapted to discrete time: indeed, the set of discrete-time nearest-neighbor paths is not closed under coordinate-wise addition, so $\PSRW$ does not enjoy the convolution property discussed in the beginning of \cite[Section 2]{FJ21}. Moreover, the inverse temperature $\beta$ is not a direct analogue of the jump rate $\kappa$.

\subsection{Auxiliary results}

In the next section, we introduce the new tool mentioned in the introduction. Afterwards, in Section~\ref{sec:drift}, we state some technical results necessary for the perturbative arguments in Theorems~\ref{thm:clt_prob} and \ref{thm:ldp}. We only consider the discrete-time models \eqref{itm:site} and \eqref{itm:bond}. A suitable version of Theorem~\ref{thm:semigroup} should, in principle, be valid in the continuous-time setting  as well, but due to the shift invariance discussed at the beginning of Section~\ref{sec:as} the generalization of Proposition~\ref{prop:drift} is not interesting for the Brownian polymer model. 

\subsubsection{A quantitative comparison result}\label{sec:results_semigroup}

\smallskip We start by introducing the \emph{noise operator} $
\Ts_\rho$ for site disorder. Let $\widetilde\omega$ be an independent copy of $\omega$ and $(U_{t,x})_{t\in\N,x\in\Z^d}$ an independent family of uniformly distributed random variables. For $\rho\in[0,1]$, define $\Ts_\rho\colon L^2(\Omegas)\to L^2(\Omegas)$ by
\begin{align}\label{eq:T_rho_def}
(\Ts_\rho f)(\omega)\coloneqq \E\left[f(\omega_\rho)\big|\omega\right],
\end{align}
where $\omega_\rho\in\Omegas$ is given by
\begin{align}\label{eq:omega_rho}
\omega_\rho(t,x)\coloneqq 
\begin{cases}
\omega(t,x)&\text{ if }U_{t,x}\leq\rho\\
\widetilde\omega(t,x)&\text{ if }U_{t,x}>\rho.
\end{cases}
\end{align}
In words, $\Ts_\rho$ acts on $\omega$ by resampling each coordinate with probability $1-\rho$, independently. A useful generalization is to consider a family $\rrho=(\rrho_{t,x})_{t\in\N,x\in\Z^d}$ of parameters and define the \emph{inhomogeneous noise operator} by $(\Ts_\rrho f)(\omega)\coloneqq \E\left[f(\omega_\rrho)\big|\omega\right]$, where
\begin{align}\label{eq:omega_rho_inho}
\omega_\rrho(t,x)\coloneqq 
\begin{cases}
\omega(t,x)&\text{ if }U_{t,x}\leq\rrho_{t,x}\\
\widetilde\omega(t,x)&\text{ if }U_{t,x}>\rrho_{t,x}.
\end{cases}
\end{align}
The noise operator $\Tb_\rho\colon L^2(\Omegab)\to L^2(\Omegab)$ for \eqref{itm:bond} is defined similarly by independently resampling the environment at each bond. We now state the comparison result mentioned in Section~\ref{sec:new}.

\begin{theorem}\label{thm:semigroup}
Consider either \eqref{itm:bond} or \eqref{itm:site} and assume that the environment is bounded \eqref{eq:bd}. Let $0<\beta_-<\beta_+<\infty$.
\begin{enumerate}
\item[(i)] For every $\aalpha\in\M(\U)$ and $\beta_1,\beta_2\in[\beta_-,\beta_+]$ with $\beta_1\leq\beta_2$, there exists $\rho_0^{\beta_-,\beta_+}(\beta_1,\beta_2)\in[0,1]$ such that, for every $\rrho\geq \rho_0^{\beta_-,\beta_+}(\beta_1,\beta_2)$, $n\in\N$ and $f\colon \R_+\to\R$ convex,
\begin{align}\label{eq:semigroup}
\E\left[f(W_n^{\beta_1,\aalpha})\right]\leq \E\left[f(T_\rrho W_n^{\beta_2,\aalpha})\right].
\end{align}
\item[(ii)] There exists $C=C(\beta_-,\beta_+)$ such that, for every $\beta_1,\beta_2\in[\beta_-,\beta_+]$ with $\beta_1\leq\beta_2$,  \begin{align}\label{eq:rho_estimate}
\rho_0^{\beta_-,\beta_+}(\beta_1,\beta_2)\leq 1-C(\beta_2/\beta_1-1).
\end{align}
\item[(iii)] For every $\aalpha\in\M(\U)$ and $\beta_1,\beta_2\in[\beta_-,\beta_+]$ with $\beta_1\leq\beta_2$ and $\rrho\geq \rho_0^{\beta_-,\beta_+}(\beta_1,\beta_2)$, there exists a coupling $(\omega_1,\omega_2)$ with marginals $\P$ such that, for every $n\in\N$,
\begin{align}\label{eq:semigroup_coupling}
W_n^{\beta_1,\aalpha}(\omega_1)=\E\left[(T_\rrho W_n^{\beta_2,\aalpha})(\omega_2)\big|\omega_1\right].
\end{align}
\end{enumerate}
\end{theorem}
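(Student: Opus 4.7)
The plan is to prove (iii) directly via a coordinate-wise Strassen coupling, from which (i) follows instantly by conditional Jensen's inequality, and to obtain the quantitative bound (ii) by a perturbation argument at $\rho = 1$.

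\textbf{Reduction to a single-coordinate coupling.} Both models share the crucial structural property that, along any fixed path $X$, the sequence of coordinates $\xi_i(X)$ (edges for \eqref{itm:bond}, sites for \eqref{itm:site}) is pairwise distinct. Expanding $T_\rrho$ and integrating out the resampled coordinates inside the path product,
\begin{align*}
W_n^{\beta_1,\aalpha}(\omega) = E^\aalpha\!\Bigl[\prod_{i=1}^n g_1(\omega_{\xi_i(X)})\Bigr], \qquad (T_\rrho W_n^{\beta_2,\aalpha})(\omega) = E^\aalpha\!\Bigl[\prod_{i=1}^n\!\bigl(\rrho_{\xi_i(X)} g_2(\omega_{\xi_i(X)}) + 1-\rrho_{\xi_i(X)}\bigr)\Bigr],
\end{align*}
with $g_j(y) = e^{\beta_j y - \lambda(\beta_j)}$. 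If one can exhibit a coupling $(\omega_1,\omega_2)$ with marginals $\P$, independent across coordinates, and such that at every coordinate $\xi$,
\begin{align*}
\E\bigl[\rrho_\xi g_2(\omega_2(\xi)) + 1-\rrho_\xi \,\big|\, \omega_1\bigr] = g_1(\omega_1(\xi)),
\end{align*}
then, because the $\xi_i(X)$ are distinct for each fixed $X$, the conditional expectation factorises across the product, delivering \eqref{eq:semigroup_coupling}, and hence (iii). The inequality (i) is then immediate from conditional Jensen's inequality applied to the outer expectation.

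\textbf{Existence of the single-coordinate coupling.} This is a one-dimensional martingale coupling problem. By Strassen's theorem, the desired coupling on $(\Omega_0,\F_0)^2$ with both marginals $\P_0$ exists if and only if the law of $g_1(\omega_0)$ is dominated in the convex order by that of $\rho g_2(\omega_0)+(1-\rho)$; as both have mean $1$, this amounts to $\E[(g_1(\omega_0)-t)_+] \leq \E[(\rho g_2(\omega_0)+1-\rho-t)_+]$ for every $t\in\R$. Define $\rho_0^{\beta_-,\beta_+}(\beta_1,\beta_2)$ as the infimum of such $\rho$. Monotonicity in $\rho$ is straightforward: $\rho\mapsto \rho g_2(\omega_0)+(1-\rho)$ is an affine interpolation between the constant $1$ and $g_2(\omega_0)$ and hence increasing in the convex order, which lifts the homogeneous statement to the inhomogeneous $\rrho\geq \rho_0$ via independent coordinate-wise choices.

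\textbf{Quantitative estimate (ii) and main obstacle.} At $\rho=1$ the required convex order is Neveu's monotonicity $\E[f(g_{\beta_1})]\leq \E[f(g_{\beta_2})]$, strict whenever $\omega_0$ is non-constant and $f$ is strictly convex. To measure how far $\rho$ can be pushed below $1$, introduce
$F_f(\beta,\rho) \coloneqq \E\bigl[f\bigl(\rho\, g_\beta(\omega_0) + 1-\rho\bigr)\bigr]$
and compute at $(\beta_1,1)$: on the one hand $\partial_\rho F_f = \operatorname{Cov}_{\P_0}(f'(g_{\beta_1}),g_{\beta_1})\geq 0$ by FKG on $\R$ (both $f'$ and $g_{\beta_1}$ are non-decreasing in $\omega_0$), and on the other $\partial_\beta F_f = \E[f'(g_{\beta_1})(\omega_0-\lambda'(\beta_1))g_{\beta_1}]\geq 0$ as a covariance under the tilted measure $g_{\beta_1}\,\dd\P_0$. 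The plan is to use boundedness of $\omega_0$ from \eqref{eq:bd} and of $\beta\in[\beta_-,\beta_+]$ to establish the pointwise domination $\partial_\beta F_f \geq c\,\beta_1^{-1}\partial_\rho F_f$ uniformly in convex $f$, for some $c=c(\beta_-,\beta_+)>0$; integrating this differential inequality along a path $(\beta,\rho(\beta))$ with $\rho(\beta_1)=1$ then yields $\rho(\beta_2)\geq 1-C(\beta_2/\beta_1-1)$, giving (ii). The main technical difficulty is precisely the uniformity of $c$ in $f$: both partials are positive linear functionals of $f''$ after integration by parts, so one must dominate one integral kernel by a constant multiple of the other pointwise on the compact support of $g_\beta(\omega_0)$ -- this is where the bounded-environment assumption \eqref{eq:bd} becomes indispensable, since it confines both $g_\beta(\omega_0)$ and $\omega_0 g_\beta(\omega_0)$ to fixed compact intervals, on which the extremal convex $f$ reduces essentially to an affine function.
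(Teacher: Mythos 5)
Your treatment of parts~(i) and~(iii) follows essentially the same route as the paper: reduce to a single-coordinate statement by exploiting that, for each fixed path, the visited sites (resp.\ traversed bonds) are pairwise distinct; establish the convex-order comparison $h^{\beta_1}\preceq_{cx}\rho\,h^{\beta_2}+(1-\rho)$ on a single coordinate; invoke the Strassen-type martingale coupling (the paper cites \cite[Theorem~3.A.4]{SS07}) independently at each coordinate; and deduce~(i) from~(iii) by conditional Jensen. The passage from a scalar $\rho_0$ to an inhomogeneous $\rrho\geq\rho_0$ via monotonicity in $\rho$ of the convex order is also the paper's move. So this part is correct and not a new route.

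For part~(ii) your plan is genuinely different from the paper's, and it is where the proposal has a real gap. The paper characterises $\preceq_{cx}$ via the integrated inverse distribution functions (equivalently, stop-loss transforms), writes $\rho_0(\beta_1,\beta_2)$ explicitly as $1-\inf_{x\in(0,1)}f(\beta_1,\beta_2,x)/g(\beta_1,\beta_2,x)$ with $f,g$ built from $\int_0^xF^{-1}_{e^{\beta_i\omega}}$, and then directly bounds $f\geq c_1(\beta_2/\beta_1-1)(x\wedge(1-x))$ and $g\leq c_2(x\wedge(1-x))$, using the identity $F^{-1}_{e^{\beta_2\omega}}=\bigl(F^{-1}_{e^{\beta_1\omega}}\bigr)^{\beta_2/\beta_1}$ and a two-variable symmetrisation; the boundedness \eqref{eq:bd} enters to control the derivatives of the Lorenz curve at $0$ and $1$. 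You instead propose to differentiate $F_f(\beta,\rho)=\E[f(\rho g_\beta+1-\rho)]$ and integrate a differential inequality $\partial_\beta F_f\geq c\,\partial_\rho F_f$ along a curve $\rho(\beta)$ starting at $(\beta_1,1)$. Two things are missing. First, you only compute the partials at $(\beta_1,1)$; integrating the inequality requires it at every $(\beta,\rho(\beta))$ with $\rho<1$, where the formulas change (the thresholds for the extremal $f_t(y)=(y-t)_+$ become $\rho$-dependent), and you give no argument there. Second, the ``pointwise domination of kernels'' that you identify as the main difficulty is exactly the content that needs to be proved uniformly in $t$ and in $(\beta,\rho)$ along the path; declaring it ``the plan'' leaves the quantitative estimate unestablished. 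The estimate is plausible -- both $\partial_\rho F_{f_t}$ and $\partial_\beta F_{f_t}$ vanish to the same first order at the endpoints $t$ corresponding to $\essinf$ and $\esssup$ of $g_\beta(\omega_0)$, which is where \eqref{eq:bd} is needed -- but it has to be carried out, and in its present form~(ii) is a sketch rather than a proof. If completed, the differential route would recover the paper's bound by a different and arguably cleaner mechanism; as written, it does not.
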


\begin{remark}
\begin{enumerate}
 \item[(i)] Note that $T_{1}W_n^{\beta}=W_n^\beta$, so \eqref{eq:semigroup} in particular implies that $\beta\mapsto \E[f(W_n^\beta)]$ is increasing in $[\beta_-,\beta_+]$, partially recovering the result from \cite[Theorem 1.2]{N16}.
 \item[(ii)] The assumption of boundedness is sharp. More precisely, without \eqref{eq:bd} the only $\rho$ satisfying \eqref{eq:semigroup} is $\rho_0=1$, so there is no improvement on the result from  \cite{N16}. See Remark~\ref{rem:lorenz} below.
 \item[(iii)] It is known that \eqref{eq:semigroup} and \eqref{eq:semigroup_coupling} are equivalent for any fixed $n$, see \cite[Theorem 3.A.4]{SS07}. The value of Theorem~\ref{thm:semigroup}(iii) is that there is a coupling satisfying \eqref{eq:semigroup_coupling} simultaneously for all $n\in\N$, which we need in Proposition~\ref{prop:drift}(ii).
 \item[(iv)] The constant $C$ from Theorem~\ref{thm:semigroup}(ii) is explicit and depends only on the marginal $\P_0$ of $\P$, see \eqref{eq:rho0}.
 \item[(v)] This result is motivated by \cite[5.A.7.b]{majorization}, where a similar statement appeared for random variables with finite support, although they did not investigate under which conditions $\rho_0<1$ holds.
\end{enumerate}
\end{remark}

An illustrative example is the Bernoulli environment, where we can choose $\omega_1=\omega_2$ in \eqref{eq:semigroup_coupling}.

\begin{example}
Assume $\P_0(\omega_0=0)=1-\P(\omega_0=1)=p\in(0,1)$
. We claim that there exists $\rho=\rho(\beta_1,\beta_2)$ such that almost surely
\begin{align*}
W_n^{\beta_1}(\omega)=(T_\rho W_n^{\beta_2})(\omega).
\end{align*}
Indeed, requiring $e^{\beta_1\omega-\lambda(\beta_1)}=T_\rho e^{\beta_2\omega-\lambda(\beta_2)}$ for both $\omega=0$ and $\omega=1$ results in two equations,
\begin{align*}
e^{-\lambda(\beta_1)}&=\rho e^{-\lambda(\beta_2)}+(1-\rho),\\
e^{\beta_1-\lambda(\beta_1)}&=\rho e^{\beta_2-\lambda(\beta_2)}+(1-\rho).
\end{align*}
Some calculations reveal that both are solved by $\rho\coloneqq \frac{e^{\lambda(\beta_2)}(e^{\beta_1}-1)}{e^{\lambda(\beta_1)}(e^{\beta_2}-1)}$.
\end{example}

\subsubsection{Stability under perturbation by small drifts for bond disorder}\label{sec:drift}

Our main conclusion from Theorem~\ref{thm:semigroup} is that for bond disorder, \eqref{eq:WD} is stable under perturbation by small drifts, which is the content of Proposition~\ref{prop:drift} below. To make this precise, we introduce a ``distance'' between $\aalpha,\aalpha'\in\M(\U)$,
\begin{align}
m(\aalpha|\aalpha')&\coloneqq \min\Big\{\frac{\aalpha_u}{\aalpha_u'}:u\in\U\Big\},\label{eq:def_m}\\
d(\aalpha,\aalpha')&\coloneqq m(\aalpha|\aalpha')m(\aalpha'|\aalpha)\label{eq:def_d}.
\end{align}
Indeed, note that $d(\aalpha,\aalpha'),m(\aalpha|\aalpha')\in[0,1]$ and $d(\aalpha,\aalpha')=m(\aalpha|\aalpha')=1$ if and only if $\aalpha=\aalpha'$. 

\begin{proposition}\label{prop:drift}
Consider \eqref{itm:bond}, assume that the environment is bounded \eqref{eq:bd} and let $0<\beta_-<\beta_+<\infty$, $\aalpha_+\in\M(\U)$ and 
\begin{align}
\Lambda(\beta_-,\beta_+,\aalpha_+)\coloneqq \left\{(\aalpha,\beta)\in\M(\U)\times[\beta_-,\beta_+]\colon d(\aalpha,\aalpha_+)\geq \rho_0^{\beta_-,\beta_+}(\beta,\beta_+)\right\}.
\end{align}
\begin{enumerate}
 \item[(i)] For every $f\colon \R_+\to\R$ convex,
\begin{align}
\sup_n\sup_{(\aalpha,\beta)\in\Lambda}\E\left[f(W_n^{\beta,\aalpha})\right]\leq \sup_n\E\left[f(W_n^{\beta_+,\aalpha_+})\right].
\end{align}
 \item[(ii)] If \eqref{eq:WD} holds for $(\aalpha_+,\beta_+)$ then there exists $p>1$ such that 
\begin{align}\label{eq:W_nW_infty}
\lim_{n\to\infty}\sup_{(\aalpha,\beta)\in\Lambda}\|W_n^{\beta,\aalpha}-W_\infty^{\beta,\aalpha}\|_p=0.
\end{align}
 Moreover, 
 \begin{align}\label{eq:W_nW_m}
\lim_{N\to\infty}\sup_{(\aalpha,\beta)\in\Lambda}\E\left[ \sup_{n,m\geq N}\left|W_n^{\beta,\aalpha}-W_m^{\beta,\aalpha}\right|\right]=0.
\end{align}
\end{enumerate}
\end{proposition}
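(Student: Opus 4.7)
The plan for part~(i) is to construct a direction-dependent noise operator that simultaneously absorbs the temperature change $\beta\to\beta_+$ and the drift change $\aalpha\to\aalpha_+$. Concretely, I would set $\rho_e=\rho_{u(e)}$ with
\[
\rho_u\coloneqq m(\aalpha|\aalpha_+)\,\frac{\aalpha_{+,u}}{\aalpha_u},
\]
for which a direct computation gives $\rho_u\in[d(\aalpha,\aalpha_+),1]$. The hypothesis $(\aalpha,\beta)\in\Lambda$ ensures $\min_u\rho_u\geq\rho_0^{\beta_-,\beta_+}(\beta,\beta_+)$, so Theorem~\ref{thm:semigroup}(i), applied at drift $\aalpha$, delivers the initial bound $\E[f(W_n^{\beta,\aalpha})]\leq\E[f(\Tb_\rrho W_n^{\beta_+,\aalpha})]$.

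The remaining step uses the bond-disorder structure. Exploiting the identity $\aalpha_u\rho_u=m'\aalpha_{+,u}$ with $m'\coloneqq m(\aalpha|\aalpha_+)$, I would rewrite and expand
\[
\Tb_\rrho W_n^{\beta_+,\aalpha}=\sum_X\prod_{i=1}^n\bigl[m'\aalpha_{+,u_i}e^{\beta_+\omega_{e_i}-\lambda(\beta_+)}+(1-m')\nu_{u_i}\bigr]=\sum_{S\subseteq\n}(m')^{|S|}(1-m')^{n-|S|}\,V_n^S,
\]
with $\nu_u=(\aalpha_u-m'\aalpha_{+,u})/(1-m')\in\M(\U)$ and $V_n^S$ the ``partial polymer'' that uses drift $\aalpha_+$ with environment factor on the steps in $S$, and drift $\nu$ without environment factor on the steps in $S^c$. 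Conditioning on the $S^c$-directions writes $V_n^S=\sum_\tau Q^\tau W_\tau$ as a convex combination of active-step partition functions; by the translation invariance of bond disorder each $W_\tau$ is equal in distribution to $W_{|S|}^{\beta_+,\aalpha_+}$. Jensen's inequality for convex $f$ then yields $\E[f(V_n^S)]\leq\E[f(W_{|S|}^{\beta_+,\aalpha_+})]$, and a further Jensen step combined with the monotonicity of $k\mapsto\E[f(W_k^{\beta_+,\aalpha_+})]$ (itself a consequence of the martingale property and convexity of $f$) gives $\E[f(\Tb_\rrho W_n^{\beta_+,\aalpha})]\leq\E[f(W_n^{\beta_+,\aalpha_+})]$. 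Chaining and taking $\sup_n$ on the left yields part~(i).

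For part~(ii), I would apply Theorem~\ref{thmx:sj} to $(\aalpha_+,\beta_+)$ to obtain some $p>1$ with $M\coloneqq\sup_n\|W_n^{\beta_+,\aalpha_+}\|_p<\infty$. Setting $f(x)=x^p$ in part~(i) immediately gives the uniform bound $\sup_n\sup_{(\aalpha,\beta)\in\Lambda}\|W_n^{\beta,\aalpha}\|_p\leq M$, so each $(W_n^{\beta,\aalpha})_n$ is a positive $L^p$-bounded martingale and converges in $L^p$ to $W_\infty^{\beta,\aalpha}$. The uniformity over $\Lambda$ in \eqref{eq:W_nW_infty} is extracted from the uniform $L^p$-bound via a standard truncation argument: approximate $W_\infty^{\beta,\aalpha}$ by $W_\infty^{\beta,\aalpha}\wedge K$ in $L^p$ uniformly in $\Lambda$ (using Markov's inequality and the uniform $L^p$ bound), and then apply dominated convergence to the resulting bounded martingale. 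Finally, \eqref{eq:W_nW_m} follows from Doob's maximal inequality applied to $(W_n^{\beta,\aalpha}-W_\infty^{\beta,\aalpha})_n$ together with \eqref{eq:W_nW_infty}.

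The main obstacle is the bond-disorder decomposition step, where $V_n^S$ must be identified as a convex combination of translation-equivalent copies of $W_{|S|}^{\beta_+,\aalpha_+}$. This uses crucially that bond-disorder environments are iid across distinct bonds of the lattice, and it does not carry over to site disorder -- consistent with the remark in Section~\ref{sec:new} that the drift-integration argument ``only works for bond disorder''.
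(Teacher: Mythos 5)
Your argument for part~(i) is essentially the paper's: your expansion of $\Tb_\rrho W_n^{\beta_+,\aalpha}$ over subsets $S\subseteq\n$ reconstructs the content of the paper's Lemma~\ref{lem:strange_coupling} (in the paper, $S^c$ is encoded as the non-jump times of an auxiliary lazy-walk path $\Pi\sim P^{\eeta}$, and your $V_n^S$ is the conditional $W_{|S|}^{\beta_+,\aalpha_+}(\omega(\Pi))$). The chain ``Theorem~\ref{thm:semigroup}(i) $\Rightarrow$ decompose $\Tb_\rrho W^{\beta_+,\aalpha}_n$ as a mixture of shorter $W^{\beta_+,\aalpha_+}_k$'s $\Rightarrow$ Jensen plus submartingale monotonicity'' is correct and matches the paper's part~(i). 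Your observation that submartingale monotonicity lets one replace $\sup_k$ by $\E[f(W_n^{\beta_+,\aalpha_+})]$ is a minor sharpening.

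Part~(ii), however, has a genuine gap. A uniform $L^p$ bound $\sup_n\sup_{(\aalpha,\beta)\in\Lambda}\|W_n^{\beta,\aalpha}\|_p\leq M$ does \emph{not} yield a uniform rate of $L^p$-convergence over $\Lambda$; the truncation/dominated-convergence argument you sketch controls each $(\aalpha,\beta)$ separately, but nothing forces the ``for all $\eps$ there is $N$'' to hold uniformly. A concrete counterexample: if $(M_n^{(k)})_n$ is the martingale that jumps at time $k$ from $0$ to a mean-zero $X$ with $\|X\|_p=1$, the family is uniformly $L^p$-bounded and each member converges, yet $\sup_k\|M_n^{(k)}-M_\infty^{(k)}\|_p=1$ for every $n$. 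The paper avoids this by using the \emph{coupling} from Theorem~\ref{thm:semigroup}(iii): one gets, pathwise,
\begin{align*}
W_n^{\beta,\aalpha}(\omega_1)=\E\otimes E^{\eeta}\left[W^{\beta_+,\aalpha_+}_{|N_n(\Pi)|}(\omega_2(\Pi))\,\Big|\,\omega_1\right],
\end{align*}
which passes to the limit $n\to\infty$ (dominated convergence, using \eqref{eq:sup_integrable}) to give the matching identity for $W_\infty^{\beta,\aalpha}$. Jensen then bounds $\|W_n^{\beta,\aalpha}-W_\infty^{\beta,\aalpha}\|_p^p$ by a binomial mixture of $\|W_k^{\beta_+,\aalpha_+}-W_\infty^{\beta_+,\aalpha_+}\|_p^p$ with success probability $q=m(\aalpha|\aalpha_+)$ bounded below on $\Lambda$, and the uniformity comes precisely from this transfer to the single reference martingale at $(\aalpha_+,\beta_+)$. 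Without Theorem~\ref{thm:semigroup}(iii) you only relate expectations of convex functionals, not the limits $W_\infty$, and the uniform estimate \eqref{eq:W_nW_infty} does not follow. Your deduction of \eqref{eq:W_nW_m} from \eqref{eq:W_nW_infty} via Doob's maximal inequality is fine once \eqref{eq:W_nW_infty} is in hand, though the paper instead derives \eqref{eq:W_nW_m} directly from the coupling and \eqref{eq:sup_integrable}.
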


Next, we have the following perturbation result for the free energy.

\begin{proposition}\label{prop:drift_free}
Under the assumptions of Proposition~\ref{prop:drift}, for every $\beta_1,\beta_2\in[\beta_-,\beta_+]$ with $\beta_1\leq\beta_2$ and every $\aalpha_1,\aalpha_2\in\M(\U)$ such that
\begin{align}\label{eq:thisthis}
d(\aalpha_1,\aalpha_2)\geq\rho_0^{\beta_-,\beta_+}(\beta_1,\beta_2),
\end{align}
it holds that 
\begin{align}
\p(\beta_1,\aalpha_1)\geq m(\aalpha_1|\aalpha_2)\p(\beta_2,\aalpha_2).
\end{align}
\end{proposition}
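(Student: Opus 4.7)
The idea is to apply Proposition~\ref{prop:drift}(i) not with the convex function $f = -\log$ (which would only yield an additive comparison, too weak to produce the factor $m(\aalpha_1|\aalpha_2)$ multiplicatively) but with the convex power $f(x) = -x^m$, where I set $m := m(\aalpha_1|\aalpha_2) \in (0,1]$. Indeed, $f''(x) = -m(m-1)x^{m-2}\geq 0$ on $(0,\infty)$. The plan is then to translate the resulting moment inequality into a free-energy inequality via Jensen's inequality and an annealed-to-quenched rate identification.

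\emph{Step 1 (setup and moment bound).} Take $\aalpha_+ := \aalpha_2$, $\beta_+ := \beta_2$, so that by the standing hypothesis $d(\aalpha_1,\aalpha_2) \geq \rho_0^{\beta_-,\beta_+}(\beta_1,\beta_2)$, the pair $(\aalpha_1,\beta_1)$ lies in the set $\Lambda(\beta_-,\beta_2,\aalpha_2)$ introduced in Proposition~\ref{prop:drift}. Applying Proposition~\ref{prop:drift}(i) to $f(x) = -x^m$ yields
\[
\E\bigl[(W_n^{\beta_1,\aalpha_1})^m\bigr] \;\geq\; \inf_{k\in\N}\E\bigl[(W_k^{\beta_2,\aalpha_2})^m\bigr] \qquad \text{for every } n \in \N.
\]
The $L^p$-boundedness of Theorem~\ref{thmx:sj} with $p > 1$ ensures that this quantity behaves well: on a pointwise-in-$n$ level, one can further use Theorem~\ref{thm:semigroup}(i) with same drift to upgrade this to $\E[(W_n^{\beta_1,\aalpha_1})^m] \geq \E[(W_n^{\beta_2,\aalpha_1})^m]$, and then a pathwise change-of-measure bound $(W_n^{\beta_2,\aalpha_1})^m \geq m^{mn}(W_n^{\beta_2,\aalpha_2})^m$ (from $P^{\aalpha_1}(X) \geq m^n P^{\aalpha_2}(X)$) to absorb the drift change.

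\emph{Step 2 (passage to the free energy).} By Jensen's inequality applied to the convex function $x \mapsto e^{mx}$,
\[
\E\bigl[(W_n^{\beta,\aalpha})^m\bigr] \;=\; \E\bigl[e^{m\log W_n^{\beta,\aalpha}}\bigr] \;\geq\; e^{m\,\E[\log W_n^{\beta,\aalpha}]},
\]
and the $L^1$-convergence $\frac{1}{n}\E[\log W_n^{\beta,\aalpha}] \to \p(\beta,\aalpha)$, valid by combining Theorem~\ref{thmx:free} (which gives the a.s.\ limit) with the uniform integrability supplied by Theorem~\ref{thmx:sj}, gives the lower bound $\liminf_n \frac{1}{n}\log \E[(W_n^{\beta,\aalpha})^m] \geq m\,\p(\beta,\aalpha)$. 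The matching upper bound $\limsup_n \frac{1}{n}\log\E[(W_n^{\beta,\aalpha})^m] \leq m\,\p(\beta,\aalpha)$ can be established from a concentration estimate for $\log W_n^{\beta,\aalpha}$ around its mean $n\,\p(\beta,\aalpha)$: in the bounded-environment setting \eqref{eq:bd}, Azuma--Hoeffding applied to the Doob martingale obtained by revealing the disorder one bond at a time yields $\P(|\log W_n^{\beta,\aalpha} - \E[\log W_n^{\beta,\aalpha}]| > tn) \leq e^{-c t^2 n}$, which together with the $L^p$-bound gives $\lim_n \frac{1}{n}\log\E[(W_n^{\beta,\aalpha})^m] = m\,\p(\beta,\aalpha)$.

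\emph{Step 3 (conclusion).} Taking $\lim_n \frac{1}{n}\log$ in the inequality from Step~1 and using Step~2 on both sides,
\[
m\,\p(\beta_1,\aalpha_1) \;\geq\; m\log m + m\,\p(\beta_2,\aalpha_2).
\]
Dividing by $m > 0$ leaves $\p(\beta_1,\aalpha_1) \geq \log m + \p(\beta_2,\aalpha_2)$, which is weaker than the target. The multiplicative factor must therefore arise by refining Step~1: instead of the crude bound $P^{\aalpha_1}(X)/P^{\aalpha_2}(X) \geq m^n$, one uses the inhomogeneous noise operator to absorb the drift change into the $\beta$-change, applying Theorem~\ref{thm:semigroup} directionally so that the per-direction resampling parameters $\rho_u$ encode the ratios $\aalpha_{1,u}/\aalpha_{2,u}$; this is the same mechanism as in the intuition ``integrating out edges pointing in the $e_1$-direction yields a polymer with a drift in the opposite direction'' mentioned in Section~\ref{sec:new}.

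\emph{Main obstacle.} The multiplicative factor $m(\aalpha_1|\aalpha_2)$ is the most delicate point: the naive pathwise bound together with the $f = -x^m$ moment comparison gives only the weaker additive bound $\p(\beta_1,\aalpha_1) \geq \log m + \p(\beta_2,\aalpha_2)$. Producing the stated multiplicative form requires a finer direction-dependent application of the noise operator $T_\rrho$, combined with the annealed-to-quenched identification of Step~2; this is where the specific structure of bond (as opposed to site) disorder becomes essential, as noted in Section~\ref{sec:ldp}.
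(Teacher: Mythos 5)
Your proposal does not recover the paper's argument, and the route you sketch has a fundamental gap that you partially but not fully identify.

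The paper's proof applies Theorem~\ref{thm:semigroup} with the \emph{concave} function $\log$ (i.e., $f=-\log$, not $f=-x^m$), then invokes Lemma~\ref{lem:strange_coupling} to rewrite $T_\rrho W_n^{\beta_2,\aalpha_1}(\omega)$ as the mixture $E^\eeta\big[W^{\beta_2,\aalpha_2}_{|N_n(\Pi)|}(\omega(\Pi))\big]$, where $|N_n(\Pi)|$ is $\operatorname{Bin}(n,q)$-distributed with $q=m(\aalpha_1|\aalpha_2)$ under $P^\eeta$. Jensen for $\log$ then gives $\E[\log W_n^{\beta_1,\aalpha_1}]\geq\sum_k P(\operatorname{Bin}(n,q)=k)\,\E[\log W_k^{\beta_2,\aalpha_2}]$, and the multiplicative factor $q$ falls out simply because the martingale on the right-hand side is run up to a \emph{random time of order $qn$} rather than $n$; concentration of the binomial around $qn$ finishes the proof after dividing by $n$. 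The multiplicative factor is thus a \emph{time-change} phenomenon, not a measure-change one, and it is packaged entirely inside Lemma~\ref{lem:strange_coupling}, which your proof never actually invokes (you gesture at it only in the last two paragraphs without carrying out any computation).

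Beyond the acknowledged weakness of the additive bound, your Step~2 is incorrect as stated. The identity $\lim_n\frac1n\log\E\big[(W_n^{\beta,\aalpha})^m\big]=m\,\p(\beta,\aalpha)$ does not hold for fixed $m\in(0,1]$. The Jensen direction gives the $\geq$ inequality, but Azuma--Hoeffding for the Doob martingale of $\log W_n$ yields $\E[e^{m(\log W_n-\E\log W_n)}]\leq e^{c m^2 n}$, an exponentially large correction that does not vanish after dividing by $n$. The resulting upper bound is $\frac1n\log\E[(W_n)^m]\leq m\,\p + c m^2 + o(1)$, leaving an $O(m^2)$ gap. The extreme case $m=1$ makes the failure explicit: $\E[W_n]\equiv 1$ always, so the left-hand side is $0$, yet $\p(\beta,\aalpha)$ is strictly negative outside the weak-disorder phase (and Proposition~\ref{prop:drift} does not assume weak disorder). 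The annealed and quenched exponential rates agree only in the $m\to0$ limit, which is exactly why the paper works with $\E[\log W_n]$ directly rather than fractional moments. In short: switching to $\log$ and passing through the coupling of Lemma~\ref{lem:strange_coupling} are both essential, and neither is optional window dressing in the argument.
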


\subsection{Outline}
We first give the proofs for the auxiliary results: Theorem~\ref{thm:semigroup} can be found in Section~\ref{sec:proof_semi} and Propositions~\ref{prop:drift} and \ref{prop:drift_free} are proved in Section~\ref{sec:proof_drift}. The proofs for Theorems~\ref{thm:clt_prob}, \ref{thm:ldp} and \ref{thm:clt_as_pois} can be found in Sections~\ref{sec:proof_clt_prob}, \ref{sec:proof_ldp} and \ref{sec:proof_clt_as}.

\section{Proof of Theorem~\ref{thm:semigroup}}\label{sec:proof_semi}

We write 
\begin{align}\label{eq:h}
\h^\beta\coloneqq \exp\left(\beta \omega_0-\lambda(\beta)\right)
\end{align}
for the contribution of a generic instance of the environment and $h^\beta_{t,x}$, resp. $h^\beta_e$ for the contribution from site $(t,x)\in\N\times\Z^d$ resp. from bond $e\in E$. With this notation,
\begin{align}\label{eq:expression}
W_n^{\beta,\aalpha}(\omega)=
\begin{cases}
\sum_{\pi\in\S_n}P^\aalpha(\pi)\prod_{t=1}^n h^\beta_{t,\pi_t}&\text{ for }\eqref{itm:site},\\
\sum_{\pi\in\S_n}P^\aalpha(\pi)\prod_{e\in\pi} h^\beta_e&\text{ for }\eqref{itm:bond},
\end{cases}
\end{align}
where $\S_n$ denote the set of nearest-neighbor paths of length $n$ and where we identify at path with the set of its edges. We recall the relation $X\preceq_{cx}Y$, $X$ is smaller than $Y$ in the \emph{convex order}, between two real-valued random variables $X$ and $Y$, which holds if 
\begin{align*}
\E[f(X)]\leq\E[f(Y)]
\end{align*}
for all $f\colon\R\to\R$ convex such that both sides of the above inequality are well-defined. We refer to \cite{SS07} or to \cite{MS02} for surveys about $\preceq_{cx}$. In the next lemma, we show that Theorem~\ref{thm:semigroup} holds element-wise, from which the result follows by using the fact that $\preceq_{cx}$ is stable under taking mixtures.

\begin{lemma}\label{lem:aux}
Under the assumptions of Theorem~\ref{thm:semigroup}, there exists $C>0$ such that for every $\beta_1,\beta_2\in[\beta_-,\beta_+]$ with $\beta_1\leq\beta_2$,
\begin{align}\label{eq:to_do}
h^{\beta_1}\preceq_{cx}\rho_0 h^{\beta_2}+(1-\rho_0),
\end{align}
where $\rho_0(\beta_1,\beta_2)\coloneqq 1-C(\beta_2/\beta_1-1)$.
\end{lemma}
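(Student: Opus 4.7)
The plan is to verify \eqref{eq:to_do} via the Karlin--Novikoff single-crossing cut criterion for the convex order: if $X, Y$ have the same finite mean and there exists $t_0$ with $F_X(t) \leq F_Y(t)$ for $t \leq t_0$ and the reverse for $t \geq t_0$, then $X \preceq_{cx} Y$. Both sides of \eqref{eq:to_do} have mean $1$, so only the single-crossing condition must be checked. I view both sides as functions of the single-site environment by setting $H_1(\eta) := e^{\beta_1 \eta - \lambda(\beta_1)}$ and $H_2(\eta) := \rho_0 \, e^{\beta_2 \eta - \lambda(\beta_2)} + (1-\rho_0)$, so that the two sides become $H_1(\omega_0)$ and $H_2(\omega_0)$, both smooth and strictly increasing in $\eta$. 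Then $F_{H_1(\omega_0)}(t) \leq F_{H_2(\omega_0)}(t)$ iff $H_1^{-1}(t) \leq H_2^{-1}(t)$ iff $H_1(\eta) \geq H_2(\eta)$ at $\eta = H_1^{-1}(t)$, so the cut criterion reduces to showing that $H_1 - H_2$, on the support of $\omega_0$ (which is contained in $[-K,K]$ by \eqref{eq:bd}), is ``nonnegative, then nonpositive''.

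Differentiation shows that $H_1 - H_2$ has a unique critical point on $\R$, and using $\beta_1 < \beta_2$ this critical point is a global maximum. Combined with the limits $H_1 - H_2 \to -(1-\rho_0)$ and $-\infty$ as $\eta \to -\infty$ and $+\infty$ respectively, this gives at most two zeros $\eta^- \leq \eta^+$ on $\R$, with $H_1 - H_2 \geq 0$ precisely on $[\eta^-, \eta^+]$. It therefore suffices to enforce $\eta^- \leq -K$ and $\eta^+ \leq K$, or equivalently $H_1(-K) \geq H_2(-K)$ and $H_1(K) \leq H_2(K)$, which rearrange to the explicit lower bounds
\begin{equation*}
\rho_0 \geq \frac{1 - e^{-\beta_1 K - \lambda(\beta_1)}}{1 - e^{-\beta_2 K - \lambda(\beta_2)}} \quad\text{and}\quad \rho_0 \geq \frac{e^{\beta_1 K - \lambda(\beta_1)} - 1}{e^{\beta_2 K - \lambda(\beta_2)} - 1}.
\end{equation*}

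It remains to show that $\rho_0 := 1 - C(\beta_2/\beta_1 - 1)$ satisfies both inequalities for a suitable $C > 0$, i.e., that each of the two complementary quantities $1 - (\text{ratio})$ admits a lower bound of the form $C(\beta_2/\beta_1 - 1)$. The numerator of $1-(\text{second ratio})$ factors as $e^{\beta_1 K - \lambda(\beta_1)}(e^{\Delta} - 1)$ with $\Delta := (\beta_2 - \beta_1)K - (\lambda(\beta_2) - \lambda(\beta_1))$, and by the mean value theorem $\Delta \geq (K - \sup_{[\beta_-,\beta_+]} \lambda')(\beta_2 - \beta_1)$. The bound \eqref{eq:bd} gives $\omega_0 < K$ a.s., so $\lambda'(\beta) < K$ for every $\beta$ (as a tilted mean of $\omega_0$), and by continuity of $\lambda'$ on the compact interval $[\beta_-,\beta_+]$ the supremum is strictly less than $K$. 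Hence $\Delta$ is bounded below by a positive multiple of $\beta_2 - \beta_1 \geq \beta_-(\beta_2/\beta_1 - 1)$; after bounding the denominator above on $[\beta_-, \beta_+]$ and treating the first ratio symmetrically (using $\omega_0 > -K$ a.s.), one obtains the required constant $C = C(\beta_-, \beta_+, \P_0)$.

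The main subtlety is keeping sign conventions straight in the cut criterion and using $|\omega_0| < K$ strictly a.s.\ to get a nontrivial positive $C$; if $\omega_0$ attained $\pm K$ with positive probability, the critical ratios above would approach $1$ faster than linearly in $\beta_2 - \beta_1$ and no $C > 0$ would work, consistent with the authors' remark that boundedness is sharp.
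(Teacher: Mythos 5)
Your argument is correct, and it takes a genuinely different route from the paper. You verify the convex order via the Karlin--Novikoff single-crossing cut criterion, reducing everything to checking the sign of $H_1 - H_2$ at the two endpoints $\pm K$ of the support and then bounding the resulting endpoint ratios. The paper instead uses the stop-loss / Lorenz-curve characterization (citing \cite[Theorem 3.A.5]{SS07}), equivalent to comparing the integrated quantile functions $x\mapsto \int_0^x F^{-1}_{h(\beta_i)}(u)\,\dd u$ over all $x\in(0,1)$; this yields the \emph{optimal} $\rho_0$ as a supremum in \eqref{eq:rho0}, which is then bounded linearly by analyzing the numerator and denominator near $x=0$ and $x=1$. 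Your approach is more elementary (a single critical-point computation for $H_1-H_2$ versus quantile estimates on the full interval) and produces an explicit, easily computable $\rho_0$, at the cost of possibly not being sharp; the paper's $\rho_0$ is the smallest value for which \eqref{eq:to_do} holds but requires the heavier machinery and the figure discussion in Remark \ref{rem:lorenz}. A small point worth adding if you write this up fully: since $\eta^{-}\leq -K$ places the initial negative region of $H_1 - H_2$ entirely outside $(-K,K)\supseteq \operatorname{supp}\omega_0$, and $H_1(-K)\geq H_2(-K)$ forces $H_2^{-1}(t)>-K$ whenever $H_1^{-1}(t)>-K$, one should briefly note that the residual $``+"$ of $F_{H_1(\omega_0)}-F_{H_2(\omega_0)}$ on the left is actually identically zero there, so the cut criterion applies cleanly; you gesture at this with the phrase ``on the support of $\omega_0$'' but the check is worth spelling out. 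Your remark at the end about sharpness of \eqref{eq:bd} correctly mirrors the paper's Remark \ref{rem:lorenz}: the positivity of $K-\sup_{[\beta_-,\beta_+]}\lambda'$ (and its symmetric counterpart) is exactly where boundedness enters.
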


\begin{proof}[Proof of Theorem~\ref{thm:semigroup} for \eqref{itm:site} assuming Lemma~\ref{lem:aux}]
From \cite[Exercise 1.2]{peacocks} we obtain that, for any $\rho\in[\rho_0,1]$ and $\beta\geq 0$,
\begin{align*}
\rho_0 h^{\beta}+(1-\rho_0)\preceq_{cx}\rho h^{\beta}+(1-\rho).
\end{align*}
Thus, by Lemma~\ref{lem:aux} and the transitivity of $\preceq_{cx}$, for all $\beta_1,\beta_2\in[\beta_-,\beta_+]$ with $\beta_1\leq\beta_2$ and $\rho\geq \rho_0(\beta_1,\beta_2)$,
\begin{align*}
h^{\beta_1}\preceq_{cx}\rho h^{\beta_2}+(1-\rho).
\end{align*}
For any $(t,x)\in\N\times\Z^d$, by \cite[Theorem 3.A.4]{SS07}, there exists a coupling $(\widehat h_{t,x}^{\beta_1},\widehat h_{t,x}^{\beta_2})$ with the same marginals as $h^{\beta_1}$ and $h^{\beta_2}$ and such that
\begin{align*}
\widehat h_{t,x}^{\beta_1}=\rho\,\E\left[\widehat h_{t,x}^{\beta_2}\big|\widehat h_{t,x}^{\beta_1}\right]+(1-\rho).
\end{align*}
Since $\omega_0\mapsto h^\beta(\omega_0)$ is one-to-one, we obtain $(\omega_{t,x}^1,\omega_{t,x}^2)$ with $h^{\beta_i}_{t,x}(\omega^i_{t,x})=\widehat h^{\beta_i}_{t,x}$ for $i=1,2$, and it is clear that $\omega^i\coloneqq (\omega^i_{t,x})_{t\in\N,x\in\Z^d}$ has law $\Ps$ for $i=1,2$. For any $n\in\N$,
\begin{alignat*}{2}
\E\left[T_\rho W^{\beta_2,\aalpha}_n(\omega^2)\big|\omega^1\right]&=\sum_{\pi\in\S_n}P^\aalpha(\pi)\prod_{t=1}^n\E\left[\left(\rho h^{\beta_2}_{t,\pi(t)}(\omega^2)+(1-\rho)\right)\Big|\omega^1\right]&\\
&=\sum_{\pi\in\S_n}P^\aalpha(\pi)\prod_{t=1}^n\E\left[\big(\rho \widehat h^{\beta_2}_{t,\pi(t)}+(1-\rho)\big)\Big|\widehat h^{\beta_1}_{t,\pi(t)}\right]&\\
&=\sum_{\pi\in\S_n}P^\aalpha(\pi)\prod_{t=1}^n\widehat h^{\beta_1}_{t,\pi(t)}&\\&=W^{\beta_1,\aalpha}_n(\omega^1).
\end{alignat*}
This proves part (ii) and (iii), and part (i) follows from part (iii) by Jensen's inequality for conditional expectation.
\end{proof}

\begin{proof}[Proof of Lemma~\ref{lem:aux}]
To keep the notation is simple, we write $h(\beta)$ instead of $h^{\beta}$. Using \cite[Theorem 3.A.5]{SS07}, \eqref{eq:to_do} is equivalent to 
\begin{align}\label{eq:new_to_do}
\int_0^xF_{h(\beta_1)}^{-1}(u)\dd u\geq \int_0^xF_{\rho h(\beta_2)+(1-\rho)}^{-1}(u)\dd u\quad\text{ for all }x\in[0,1],
\end{align}
where $F_Z^{-1}$ denotes the right-continuous inverse of the distribution of a random variable $Z$. Note that
\begin{align*}
F_{h(\beta_1)}^{-1}(u)&=\frac{F_{e^{\beta_1\omega}}^{-1}(u)}{\E[e^{\beta_1\omega}]},\\
F_{\rho h(\beta_2)+(1-\rho)}^{-1}(u)&=\rho\frac{F_{e^{\beta_2\omega}}^{-1}(u)}{\E[e^{\beta_2\omega}]}+(1-\rho).
\end{align*}
We therefore have to check that, for all $x\in[0,1]$,
\begin{align}\label{eq:new_new_to_do}
\E[e^{\beta_2\omega}]\left(x\E[e^{\beta_1\omega}]-\int_0^xF_{e^{\beta_1\omega}}^{-1}(u)\dd u\right)\leq \rho\E[e^{\beta_1\omega}]\left(x\E[e^{\beta_2\omega}]-\int_0^xF_{e^{\beta_2\omega}}^{-1}(u)\dd u\right).
\end{align}
Observe that the functions $x\mapsto \int_0^xF_{e^{\beta_1\omega}}^{-1}(u)\dd u$ and $x\mapsto\int_0^xF_{e^{\beta_2\omega}}^{-1}(u)\dd u$ are convex. Since $\omega$ is not almost surely constant and $\beta_1>0$, they intersect the linear functions $x\mapsto x\E[e^{\beta_1\omega}]$ and $x\mapsto x\E[e^{\beta_2\omega}]$ exactly for $x=0$ and $x=1$. Hence both sides of \eqref{eq:new_new_to_do} are strictly positive for $x\in(0,1)$. We define
\begin{align}
\rho_0(\beta_1,\beta_2)&\coloneqq \sup_{x\in(0,1)}\frac{x\E[e^{\beta_1\omega}]\E[e^{\beta_2\omega}]-\E[e^{\beta_2\omega}]\int_0^xF_{e^{\beta_1\omega}}^{-1}(u)\dd u}{x\E[e^{\beta_1\omega}]\E[e^{\beta_2\omega}]-\E[e^{\beta_1\omega}]\int_0^xF_{e^{\beta_2\omega}}^{-1}(u)\dd u}\notag\\
&=1-\inf_{x\in(0,1)}\underbrace{\frac{\E[e^{\beta_2\omega}]\int_0^xF_{e^{\beta_1\omega}}^{-1}(u)\dd u-\E[e^{\beta_1\omega}]\int_0^xF_{e^{\beta_2\omega}}^{-1}(u)\dd u}{x\E[e^{\beta_1\omega}]\E[e^{\beta_2\omega}]-\E[e^{\beta_1\omega}]\int_0^xF_{e^{\beta_2\omega}}^{-1}(u)\dd u}}_{=:\frac{f(\beta_1,\beta_2,x)}{g(\beta_1,\beta_2,x)}}\label{eq:rho0}.
\end{align}
For \eqref{eq:new_to_do}, we have to show that there exists $C>0$ such that $\rho_0\leq 1-C(\beta_2/\beta_1-1)$. This will follow if we find $c_1,c_2>0$ such that, for all $x\in(0,1)$ and all $\beta_-<\beta_1<\beta_2<\beta_+$,
\begin{align}
f(\beta_1,\beta_2,x)&\geq c_1\left(\beta_2/\beta_1-1\right)(x\wedge (1-x)),\label{eq:f}\\
g(\beta_1,\beta_2,x)&\leq c_2(x\wedge (1-x)).\label{eq:g}
\end{align}
Using $F_{e^{\beta_2\omega}}^{-1}(u)=(F_{e^{\beta_1\omega}}^{-1}(u))^{\beta_2/\beta_1}$, we get
\begin{align*}
f(\beta_1,\beta_2,x)&=\E[e^{\beta_2\omega}]\int_0^xF_{e^{\beta_1\omega}}^{-1}(u)\dd u-\E[e^{\beta_1\omega}]\int_0^xF_{e^{\beta_2\omega}}^{-1}(u)\dd u\\
&=\int_0^xF_{e^{\beta_1\omega}}^{-1}(u)\dd u\int_0^1F_{e^{\beta_2\omega}}^{-1}(v)\dd v-\int_0^1F_{e^{\beta_1\omega}}^{-1}(u)\dd u\int_0^xF_{e^{\beta_2\omega}}^{-1}(v)\dd v\\
&=\int_0^xF_{e^{\beta_1\omega}}^{-1}(u)\dd u\int_0^1(F_{e^{\beta_1\omega}}^{-1}(v))^{\beta_2/\beta_1}\dd v-\int_0^1F_{e^{\beta_1\omega}}^{-1}(u)\dd u\int_0^x(F_{e^{\beta_1\omega}}^{-1}(v))^{\beta_2/\beta_1}\dd v\\
&=\int_{[0,1]^d}\ind_{\{u<x<v\}}F_{e^{\beta_1\omega}}^{-1}(u)F_{e^{\beta_1\omega}}^{-1}(v)\left((F_{e^{\beta_1\omega}}^{-1}(v))^{\beta_2/\beta_1-1}-(F_{e^{\beta_1\omega}}^{-1}(u))^{\beta_2/\beta_1-1}\right)\dd u\dd v.
\end{align*}
Note that the integrand is non-negative since $\beta_2\geq\beta_1$ and $F_{e^{\beta_1\omega}}^{-1}$ is increasing. Choose $\eps>0$ small enough that
\begin{align*}
a\coloneqq \sup_{\beta\in[\beta_-,\beta_+]}F_{e^{\beta\omega}}^{-1}(\eps)<\inf_{\beta\in[\beta_-,\beta_+]}F_{e^{\beta\omega}}^{-1}(1-\eps)=:b.
\end{align*}
Then for all $x\leq\eps$,
\begin{align*}
f(\beta_1,\beta_2,x)&\geq \left(b^{\beta_2/\beta_1-1}-a^{\beta_2/\beta_1-1}\right)\int_0^xF_{e^{\beta_1\omega}}^{-1}(u)\dd u\int_{1-\eps}^1 F_{e^{\beta_1\omega}}^{-1}(u)\dd u\\
&\geq x\left(b^{\beta_2/\beta_1-1}-a^{\beta_2/\beta_1-1}\right) (1-\eps)e^{-2K\beta_+}\\
&\geq x\left((b/a)^{\beta_2/\beta_1-1}-1\right) e^{-(K+1)\beta_+/\beta_-}(1-\eps)e^{-2K\beta_+}\\
&\geq x\left(\beta_2/\beta_1-1\right)\log (b/a)e^{-(K+1)\beta_+/\beta_-}(1-\eps)e^{-2K\beta_+}.
\end{align*}
Similar calculations show
\begin{align*}
f(\beta_1,\beta_2,x)&\geq (1-x)(\beta_2/\beta_1-1)c\quad\text{ for }x\geq 1-\eps,\\
f(\beta_1,\beta_2,x)&\geq (\beta_2/\beta_1-1)c\quad\text{ for }x\in[\eps,1-\eps].
\end{align*}
Combining these claims gives \eqref{eq:f}. It remains to check \eqref{eq:g}, for which we observe that $g$ is concave and differentiable with
\begin{align*}
g'(0)&=\E[e^{\beta_1\omega}]\left(\E[e^{\beta_2(\omega-\essinf\omega)}]-1\right)\essinf e^{\beta_2\omega},\\
g'(1)&=-\E[e^{\beta_1\omega}]\left(1-\E[e^{\beta_2(\omega-\esssup\omega)}]\right)\esssup e^{\beta_2\omega}.
\end{align*}
Since $\omega$ is bounded and not almost surely constant, it is not hard to see that 
\begin{align*}
\sup_{\beta_-\leq\beta_1\leq\beta_2\leq\beta_+}g'(0)&<\infty,\\
\inf_{\beta_-\leq\beta_1\leq\beta_2\leq\beta_+}g'(0)&>-\infty,
\end{align*}
and this concludes the proof.\end{proof}

\begin{remark}\label{rem:lorenz}
We quickly illustrate why we have to assume boundedness \eqref{eq:bd} in Theorem~\ref{thm:semigroup}. It is related to the so-called \emph{Lorenz curve} of a random variable $X$, which is the function defined by 
\begin{align*}
L_X(x)=\frac{\int_0^xF_{X}^{-1}(u)\dd u}{\E[X]},
\end{align*}
see the discussion in \cite[Chapter 17.C]{majorization}. Two examples are depicted in Figure~\ref{fig:lorenz}. If $X$ is not almost surely constant then $L_X$ is a concave function intersecting the diagonal exactly at $x=0$ and $x=1$. The extent to which $L_X$ lies below the diagonal is related to the ``variability'' of $X$, which in the original context is interpreted as a measure of inequality in the distribution of  wealth within a society. Notice that $\rho_0$ from \eqref{eq:rho0} can be written as
\begin{align*}
1-\inf_{x\in(0,1)}\frac{L_{e^{\beta_1\omega}}(x)-L_{e^{\beta_2\omega}}(x)}{x-L_{e^{\beta_2\omega}}(x)}.
\end{align*}
As is easy to guess from Figure~\ref{fig:lorenz}, the infimum is positive exactly if the derivative of $L_{e^{\beta_1\omega}}$ at $0$ and $1$ is bounded away from $0$ and $\infty$, which is equivalent to \eqref{eq:bd}.
\end{remark}

\begin{figure}[h!]
\includegraphics[width=.45\textwidth]{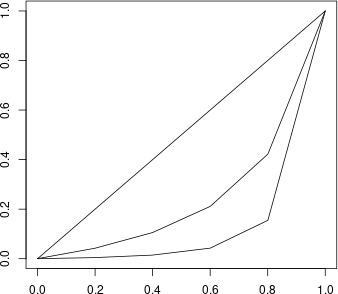}
\includegraphics[width=.45\textwidth]{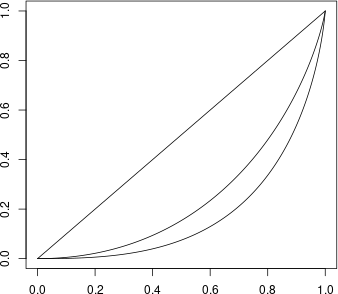}
\caption{The middle lines are the Lorenz curves (see Remark~\ref{rem:lorenz}) of a random variable $X$ whose support is bounded away from $0$ and $\infty$ (left picture) and of an exponential random variable (right picture). The bottom lines are the Lorenz curves of $X^2$.}\label{fig:lorenz}
\end{figure}

\section{Proof of the perturbative results for bond disorder}\label{sec:proof_drift}

Throughout this section we consider \eqref{itm:bond}. Let $E_u$ denote the set of edges in direction $u\in\U$,
\begin{align}\label{eq:Eu}
E_u\coloneqq \left\{\big((i,x),(i+1,x+u)\big):i\in\N,x\in\Z^d\right\}.
\end{align}

\subsection{A comparison lemma}

The main technical ingredient in the proof of Proposition~\ref{prop:drift} is Lemma~\ref{lem:strange_coupling} below, where we construct a coupling to relate $T_\rrho W_n^{\beta,\aalpha}$ to $W_n^{\beta,\aalpha'}$, for some inhomogeneous resampling parameter $\rrho$ depending on $\beta,\aalpha,\aalpha'$. The idea is that $\rrho_e$ depends on the direction of edge $e$, i.e. on  $u\in\U$ such that $e\in E_u$. We need to introduce some notation to identify $T_\rrho W_n^{\beta,\aalpha}$ with a mixture of $W_n^{\beta,\aalpha'}$.

\smallskip Recall that $\S_n$ denotes the set of nearest-neighbor paths $\pi$ of length $n$, i.e. $|\pi_{i}-\pi_{i-1}|_1=1$ for $i=1,\dots,n$, with $\S\coloneqq \S_\infty$. We write $\S_n^0$, resp. $\S^0$, for the set of paths $\pi$ that are allowed to stay in place, i.e. $|\pi_{i+1}-\pi_i|_1\leq 1$ for all $i$, and $P^\eeta$ for the law of a random walk with increment distribution $\eeta\in\M(\U\cup\{0\})$, defined similarly to \eqref{eq:RW_drift}. For $\pi\in\S^0$, we define the set of non-jump times of $\pi$,
\begin{align*}
N_n(\pi)&\coloneqq \{i\in\n:\pi_i=\pi_{i+1}\},
\end{align*}
where $\n=\{0,dots ,n\}$. For an edge $e=\big((i,x),(i+1,x+u)\big)\in E$, we introduce $e(\pi)\coloneqq \left((i',x'),(i'+1,x'+u)\right)\in E$ by
\begin{align*}
i'=\inf\{n:|N_\infty(\pi)\cap\n|> i\}\quad\text{ and }\quad x'=x+\pi_{i'}.
\end{align*}
Finally, given $\omega\in\Omegab$ and $\pi\in\S^0$, we define a new environment $\omega(\pi)\in\Omegab$ by $(\omega(\pi))_e\coloneqq \omega_{e(\pi)}$. In words, $\omega(\pi)$ is defined by deleting the time-slices where $\pi$ jumps and by spatially shifting the remaining time-slices according to the jumps of $\pi$. See Figure~\ref{fig:coupling} for an illustration. We stress that $\omega(\pi)$ has law $\Pb$ for every fixed $\pi\in\S_n^0$.

\begin{figure}[b]
\includegraphics[width=\textwidth]{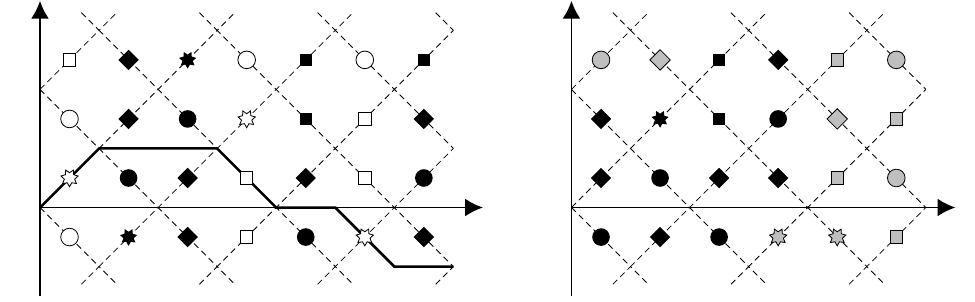}
\caption{Illustration of the coupling between $\omega$ (left), $\pi$ (thick path on the left) and $\omega(\pi)$ (right). Different values of the environment are represented by symbols on the bonds (dashed lines), with colors indicating whether the time-slice will be deleted (white) or not (black). The gray symbols on the right are filled up from outside the visible area on the left.}\label{fig:coupling}
\end{figure}

\begin{lemma}\label{lem:strange_coupling}
Consider \eqref{itm:bond} and recall \eqref{eq:def_m}. Let $\ggamma,\ggamma'\in\M(\U)$ and define $\rrho(\ggamma\to\ggamma')=(\rrho_e)_{e\in E}\in(0,1]^E$ and $\eeta(\ggamma\to\ggamma')\in\M(\U\cup\{0\})$ by
\begin{align*}
\rrho_e&\coloneqq \frac{\ggamma_u'}{\ggamma_u}m(\ggamma|\ggamma')\quad\text{ for }e\in E_u,\\
\eeta_u&\coloneqq \ggamma_u-\ggamma_u'm(\ggamma|\ggamma')\quad\text{ for }u\in\U,\\
\eeta_0&\coloneqq m(\ggamma|\ggamma').
\end{align*}
Then, for any $\omega\in\Omegab$, $n\in\N$ and $\beta\geq 0$,
\begin{align}\label{eq:strange}
\sum_\pi P^{\eeta}(\pi)W_{|N_n(\pi)|}^{\beta,\ggamma'}(\omega(\pi))= \left(T_\rrho W_{n}^{\beta,\ggamma}\right)(\omega).
\end{align}
\end{lemma}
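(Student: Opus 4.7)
The plan is to establish \eqref{eq:strange} by first expanding the right-hand side into a double sum indexed by pairs $(\pi,S)\in\S_n\times 2^{\n}$, and then reorganizing this sum via an interlacing bijection onto pairs $(\pi',\sigma)$ with $\pi'\in\S_n^0$ and $\sigma\in\S_{|N_n(\pi')|}$, precisely matching the left-hand side.

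First, I would unpack the noise operator. Independence of resampling across bonds together with $\E[h^\beta]=1$ gives
$$T_\rrho W_n^{\beta,\ggamma}(\omega)=\sum_{\pi\in\S_n}P^\ggamma(\pi)\prod_{i=1}^n\bigl(\rrho_{e_i(\pi)}h^\beta_{e_i(\pi)}(\omega)+(1-\rrho_{e_i(\pi)})\bigr),$$
where $e_i(\pi)$ is the $i$-th edge of $\pi$ and $u_i(\pi)\coloneqq\pi_i-\pi_{i-1}$ its direction. The key algebraic identities from the definitions of $\rrho$ and $\eeta$ are
$$\ggamma_u\rrho_e=m(\ggamma|\ggamma')\,\ggamma'_u\qquad\text{and}\qquad\ggamma_u(1-\rrho_e)=\eeta_u\qquad\text{for }e\in E_u.$$
Absorbing the factor $\ggamma_{u_i(\pi)}$ of $P^\ggamma(\pi)$ into each bracket rewrites the above as
$$T_\rrho W_n^{\beta,\ggamma}(\omega)=\sum_{\pi\in\S_n}\prod_{i=1}^n\bigl(m\,\ggamma'_{u_i(\pi)}h^\beta_{e_i(\pi)}(\omega)+\eeta_{u_i(\pi)}\bigr).$$

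Next, I would expand the product: for each $\pi\in\S_n$ and each $S\subseteq\n$, pick the $\eeta$-term at positions $i\in S$ and the $m\ggamma'h^\beta$-term at positions $i\in S^c$. I then parametrize the resulting double sum by pairs $(\pi',\sigma)$ via the interlacing bijection: given $(\pi,S)$, let $\pi'$ jump by $u_i(\pi)$ at each $i\in S$ and stay at each $i\in S^c$, and let $\sigma$ be the path whose $k$-th step has direction $u_{i_k}(\pi)$, where $i_1<\dots<i_{|S^c|}$ enumerates $S^c$. Under this map $J(\pi')=S$ and $N_n(\pi')=S^c$, and using $\eeta_0=m$ the probability weights combine as
$$\prod_{i\in S}\eeta_{u_i(\pi)}\cdot m^{|S^c|}\cdot\prod_{i\in S^c}\ggamma'_{u_i(\pi)}=P^\eeta(\pi')\,P^{\ggamma'}(\sigma).$$

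The final step is to match the environment contributions, which reduces to
$$\prod_{i\in S^c}h^\beta_{e_i(\pi)}(\omega)=\prod_{k=1}^{|N_n(\pi')|}h^\beta_{e_k(\sigma)}(\omega(\pi')).$$
Since $(\omega(\pi'))_{e'}=\omega_{e'(\pi')}$ by construction, this reduces to verifying $e_k(\sigma)(\pi')=e_{i_k}(\pi)$ for every $k$. I expect this to be the main technical obstacle: the map $e\mapsto e(\pi')$ is precisely engineered, as illustrated in Figure~\ref{fig:coupling}, so that the $k$-th edge of $\sigma$, lifted via $\pi'$, lands on the $k$-th non-stay edge of $\pi$, namely $e_{i_k}(\pi)$. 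The verification should proceed by induction on $k$, carefully checking that the time index $j_k$ and the spatial anchor $\pi'_{j_k}$ appearing in the definition of $e(\pi')$ line up with the basepoint of $e_{i_k}(\pi)$ after subtracting the accumulated $\sigma$-displacement already absorbed at earlier non-stay times. Once this identification is in place, grouping the sum over $(\pi,S)$ by the corresponding $(\pi',\sigma)$ and recognizing the inner sum over $\sigma$ as $W^{\beta,\ggamma'}_{|N_n(\pi')|}(\omega(\pi'))$ delivers \eqref{eq:strange}.
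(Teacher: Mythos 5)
Your proposal is correct and follows essentially the same route as the paper's proof: both expand $T_\rrho W_n^{\beta,\ggamma}$ over paths, expand the product over retained/resampled positions using the identities $\ggamma_u\rrho_e=m(\ggamma|\ggamma')\,\ggamma'_u$ and $\ggamma_u(1-\rrho_e)=\eeta_u$, and regroup by a lazy path paired with a contracted path (the paper keeps a full-length $\sigma\ll_n\pi$ and uses $\sigma\setminus\pi$ for the environment contributions, which is the same bijection you describe). The identification $e_k(\sigma)(\pi')=e_{i_k}(\pi)$ that you flag is indeed the point the paper leaves implicit in its final underbrace; it follows directly from $\pi(i_k-1)=\pi'(i_k-1)+\sigma(k-1)$, so no induction is really needed.
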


\begin{proof}
We have to introduce further notation that will not be used outside of this proof. For $\pi\in\S_n^0$ and $\sigma\in\S_n$, we say that $\sigma$ obeys $\pi$, $\sigma\ll_n\pi$, if $\pi$ and $\sigma$ have the same non-zero jumps,
\begin{align*}
\sigma\ll_n\pi\quad\iff\quad\pi_{i+1}-\pi_i=\sigma_{i+1}-\sigma_i\text{ for all }i\in\n\setminus N_n(\pi).
\end{align*}
Recall that we identify a path with the set of its bonds. For $\sigma\ll_n\pi$, we write $\sigma\setminus\pi$ for the set of bonds of $\sigma$ during time steps where $\pi$ did not jump, i.e.
\begin{align*}
\big((i,x),(i+1,x+u)\big)\in\sigma\setminus\pi\quad\iff\quad \sigma_i=x,\sigma_{i+1}=x+u\text{ and }i\in N_n(\pi).
\end{align*}
Note that $|\sigma\setminus\pi|=|N_n(\pi)|$. The proof is now a simple calculation. 
\begin{align*}
\left(T_\rrho W_{n}^{\beta,\ggamma}\right)(\omega)&=\sum_{\sigma\in\S_n} \prod_{u\in\U}(\ggamma_u)^{|E_u\cap\sigma|}\prod_{e\in E_u\cap\sigma} \left(\rrho_eh^\beta_e(\omega)+(1-\rrho_e)\right)\\
&=\sum_{\pi\in\S_n^0}\Big(\prod_{u\in\U}\big(\ggamma_u(1-\rrho_u)\big)^{|E_u\cap \pi|}\Big)\sum_{\sigma\in\S_n\colon\sigma\ll_n\pi}\prod_{u\in\U}(\ggamma_u\rrho_u)^{|E_u\cap (\sigma\setminus\pi)|}\prod_{e\in E_u\cap(\sigma\setminus\pi)} h^\beta_e(\omega)\\
&=\sum_{\pi\in\S_n^0}\underbrace{\Big(\prod_{u\in\U}\eeta_u^{|E_u\cap \pi|}\Big)\eeta_0^{|N_n(\pi)|}}_{=P^{\eeta}(\pi)}\underbrace{\sum_{\sigma\in\S_n\colon\sigma\ll_n\pi}\prod_{u\in\U}(\ggamma_u')^{|E_u\cap (\sigma\setminus\pi)|}\prod_{e\in E_u\cap(\sigma\setminus\pi)} h^\beta_e(\omega)}_{=W_{|N_n(\pi)|}^{\beta,\ggamma'}(\omega(\pi))}.\tag*{\qedhere}
\end{align*}
\end{proof}

\subsection{Proof of Propositions~\ref{prop:drift} and~\ref{prop:drift_free}}

\begin{proof}[Proof of Proposition~\ref{prop:drift}]
Fix $(\aalpha,\beta)\in\Lambda$ and let $\rrho=\rrho(\aalpha\to\aalpha_+)$ and $\eeta=\eeta(\aalpha\to\aalpha_+)$ be as defined in Lemma~\ref{lem:strange_coupling}. By definition, we have, for every $e\in E$,
\begin{align*}
\rrho_e\geq d(\aalpha,\aalpha_+)\geq \rho_0^{\beta_-,\beta_+}(\beta,\beta_+),
\end{align*}
so Theorem~\ref{thm:semigroup} implies
\begin{align*}
\E\left[f(W_n^{\beta,\aalpha})\right]\leq \E\left[f(T_{\rrho}W_n^{\beta_+,\aalpha})\right].
\end{align*}
Moreover, by Lemma~\ref{lem:strange_coupling} and Jensen's inequality,
\begin{align*}
\E\left[f(T_{\rrho}W_n^{\beta_+,\aalpha})\right]&=\E\left[f\Big(E^{\eeta}\left[W_{|N_n(\Pi)|}^{\beta_+,\aalpha_+}(\omega(\Pi))\right]\Big)\right]\\
&\leq \E\otimes E^{\eeta}\left[f\big(W_{|N_n(\Pi)|}^{\beta_+,\aalpha_+}(\omega(\Pi))\big)\right]\\
&=\sum_{k=0}^n P\big(\text{Bin}(n,q)=k)\E\left[f\big(W_{k}^{\beta_+,\aalpha_+}(\omega)\big)\right]\\
&\leq \sup_k\E\left[f\big(W_{k}^{\beta_+,\aalpha_+}(\omega)\big)\right], 
\end{align*}
where $\Pi$ is the random path chosen according to $P^{\eeta}$ and $q=m(\aalpha|\aalpha_+)$. This finishes the proof of part~(i). For \eqref{eq:W_nW_infty}, we first note that, by \eqref{eq:Lp_bd}  and the $L^p$-martingale convergence theorem, there exists $p>1$ such that
\begin{align}\label{eq:Lp}
\lim_{n\to\infty}\|W_n^{\beta_+,\aalpha_+}-W_\infty^{\beta_+,\aalpha_+}\|_p=0.
\end{align}
Let $(\omega_1,\omega_2)$ denote the coupling from \eqref{eq:semigroup_coupling}, so that, for any $n\in\N$,
\begin{align*}
W_n^{\beta,\aalpha}(\omega_1)&=\E\left[\big(T_\rrho W_n^{\beta_+,\aalpha}(\omega_2)\big)\Big|\omega_1\right]\\
&=\E\otimes E^{\eeta}\left[W^{\beta_+,\aalpha_+}_{|N_n(\Pi)|}(\omega_2(\Pi))\big)\Big|\omega_1\right].
\end{align*}
The second equality is Lemma~\ref{lem:strange_coupling}. We have 
\begin{align*}
W^{\beta_+,\aalpha_+}_{|N_n(\Pi)|}(\omega_2(\Pi))\leq \sup_kW^{\beta_+,\aalpha_+}_{k}(\omega_2(\Pi))
\end{align*}
and the right hand side is $\P\otimes P^{\eeta}$-integrable by \eqref{eq:sup_integrable}. Lebesgue's convergence theorem for conditional expectations, together with the fact that $P^{\eeta}$-almost surely $\lim_{n\to\infty}|N_n(\Pi)|=\infty$ , imply
\begin{align*}
W_\infty^{\beta,\aalpha}(\omega_1)=\E\otimes E^{\eeta}\left[W^{\beta_+,\aalpha_+}_{\infty}(\omega_2(\Pi))\Big|\omega_1\right].
\end{align*}
Using \eqref{eq:Lp} and Jensen's inequality, we can now conclude:
\begin{align*}
\E\left[\big|W_n^{\beta,\aalpha}(\omega_1)-W_\infty^{\beta,\aalpha}(\omega_1)\big|^p\right]&=\E\left[\Big|\E\otimes E^{\eeta}\Big[W_{|N_n(\omega)|}^{\beta_+,\aalpha_+}(\omega_2(\Pi))-W_\infty^{\beta_+,\aalpha_+}(\omega_2(\Pi))\Big|\omega_1\Big]\Big|^p\right]\\
&\leq \sum_{k=0}^n P\big(\text{Bin}(n,q)=k\big)\E\left[\big|W_{k}^{\beta_+,\aalpha_+}-W_\infty^{\beta_+,\aalpha_+}\big|^p\right]\\
&\leq \sup_{k\geq nq/2}\|W_{k}^{\beta_+,\aalpha_+}-W_\infty^{\beta_+,\aalpha_+}\|_p^p\\
&\qquad+2P\left(\text{Bin}(n,q)\leq nq/2\right)\sup_k\|W_k^{\beta_+,\aalpha_+}\|_p^p\\
&\xrightarrow{n\to\infty}0.
\end{align*}
The argument for \eqref{eq:W_nW_m} is similar. We have 
\begin{align*}
\E\left[ \sup_{n,m\geq N}\left|W_n^{\beta,\aalpha}-W_m^{\beta,\aalpha}\right|\right]&=\E\left[ \sup_{n,m\geq N}\Big|\E\left[(T_\rrho W_n^{\beta_+,\aalpha})(\omega_2)-(T_\rrho W_m^{\beta_+,\aalpha})(\omega_2)\big|\omega_1\right]\Big|\right]\\
&\leq \E\left[ \sup_{n,m\geq N}\left|T_\rrho W_n^{\beta_+,\aalpha}-T_\rrho W_m^{\beta_+,\aalpha}\right|\right]\\
&=\E\left[ \sup_{n,m\geq N}\left|E^{\eeta}\hspace{-1mm}\left[W_{N_n(\Pi)}^{\beta_+,\aalpha_+}(\omega(\Pi))-W_{N_m(\pi)}^{\beta_+,\aalpha_+}(\omega(\Pi))\right]\right|\right]\\
&\leq \E\otimes E^{\eeta}\hspace{-1mm}\left[\sup_{n,m\geq N}\left|W_{N_n(\Pi)}^{\beta_+,\aalpha_+}(\omega(\Pi))-W_{N_m(\pi)}^{\beta_+,\aalpha_+}(\omega(\Pi))\right|\right]\\
&\leq \E\left[\sup_{n,m\geq Nq/2}\left|W_{n}^{\beta_+,\aalpha_+}-W_{m}^{\beta_+,\aalpha_+}\right|\right]\\
&\qquad +2\P\left(\operatorname{Bin}(N,q)\leq Nq/2\right)\E\left[\sup_n W_n^{\beta_+,\aalpha_+}\right].
\end{align*}
In the first and second equalities we used \eqref{eq:semigroup_coupling} and \eqref{eq:strange} while the first and second inequalities follow from Jensen's inequality and the monotone convergence theorem. In the final inequality, we used 
$q\coloneqq\inf_{(\aalpha,\beta)\in\Lambda}m(\aalpha|\aalpha_+)>0$. Hence, by the dominated convergence theorem and \eqref{eq:sup_integrable},
\begin{align*}
\lim_{N\to\infty}\sup_{(\aalpha,\beta)\in\Lambda}\E\left[ \sup_{n,m\geq N}\left|W_n^{\beta,\aalpha}-W_m^{\beta,\aalpha}\right|\right]\leq \lim_{N\to\infty}\E\left[\sup_{n,m\geq N}\left|W_n^{\beta_+,\aalpha_+}-W_m^{\beta_+,\aalpha_+}\right|\right]=0.\tag*{\qedhere}
\end{align*}
\end{proof}

\begin{proof}[Proof of Proposition~\ref{prop:drift_free}]
Let $\rrho=\rrho(\aalpha_1\to\aalpha_2)$ and $\eeta=\eeta(\aalpha_1\to\aalpha_2)$ be as defined in Lemma~\ref{lem:strange_coupling}. By assumption, $\rrho\geq d(\aalpha_1,\aalpha_2)\geq \rho_0^{\beta_-,\beta_+}(\beta_1,\beta_2)$ and therefore, using Theorem~\ref{thm:semigroup} and Lemma~\ref{lem:strange_coupling},
\begin{align*}
\E\left[\log W_n^{\beta_1,\aalpha_1}(\omega)\right]&\geq \E\left[\log T_\rrho W_n^{\beta_2,\aalpha_1}(\omega)\right]\\
&=\E\left[\log E^\eeta\big[W^{\beta_2,\aalpha_2}_{|N_n(\Pi)|}(\omega(\Pi))\big]\right]\\
&\geq \sum_{k=0}^nP\left(\text{Bin}(n,q)=k\right)\E\left[\log W^{\beta_2,\aalpha_2}_{k}(\omega)\right],
\end{align*}
where $q=m(\aalpha_1|\aalpha_2)$. Standard concentration properties of the binomial distribution around its mean then show $\p(\beta_1,\aalpha_1)\geq q\,\p(\beta_2,\aalpha_2)$.
\end{proof}

\section{Proof of the main results}\label{sec:proof_main}

\subsection{Proof of the CLT in probability}\label{sec:proof_clt_prob}

\begin{proof}[Proof of Theorem~\ref{thm:clt_prob}(i)]
We apply Proposition~\ref{prop:drift}(i) with $\beta_-\coloneqq\beta$, $\beta_+\coloneqq \frac{\beta+\beta_{cr}}2$ and $\alpha_+\coloneqq \1/2d$. We have $\rho_0^{\beta_-,\beta_+}(\beta,\beta_+)<1$ by Theorem~\ref{thm:semigroup}(ii) and $\aalpha\mapsto d(\aalpha,\aalpha_+)$ is continuous, so we find $\alpha_0>0$ such that
\begin{align*}
\left(\M(\U)\cap(\1/2d+[-\alpha_0,\alpha_0]^d)\right)\times\{\beta\}\subseteq\Lambda\tag*{\qedhere}
\end{align*}
\end{proof}

For Theorem~\ref{thm:clt_prob}(ii) we need the following lemma.

\begin{lemma}\label{lem:mgf_in_prob}
Recall \eqref{eq:alpha_lambda}. Under the assumptions of Theorem~\ref{thm:clt_prob}, for any $\llambda\in\R^d$,
\begin{align}\label{eq:inprob}
W_n^{\beta,\aalpha(\llambda/\sqrt n)}\xrightarrow[n\to\infty]PW_\infty^{\beta}.
\end{align}
Moreover, for any $K>1$,
\begin{align}\label{eq:tightinprob}
\lim_{n\to\infty}\P\left(\mu_{\omega,n}^\beta(X_n\cdot e_1\geq K\sqrt n)>2e^{-K^2/2}\right)=0.
\end{align}
\end{lemma}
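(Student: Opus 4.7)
The plan is to prove (i) by combining Proposition~\ref{prop:drift}(ii), which provides uniform $L^p$-convergence of the martingale $W_n^{\beta,\aalpha}$ over a neighborhood of $\aalpha = \1/2d$, with finite-time continuity of $W_N^{\beta,\aalpha}$ in $\aalpha$, and then to deduce (ii) via a Girsanov-type change of measure and Markov's inequality. Set $\aalpha_n \coloneqq \aalpha(\llambda/\sqrt n)$ and note $\aalpha_n \to \1/2d$. I fix $\beta_+ \in (\beta,\beta_{cr})$ and apply Proposition~\ref{prop:drift}(ii) with $\beta_- = \beta$, $\aalpha_+ = \1/2d$; weak disorder at $(\1/2d,\beta_+)$ is immediate from $\beta_+ < \beta_{cr}$. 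Theorem~\ref{thm:semigroup}(ii) gives $\rho_0 \coloneqq \rho_0^{\beta_-,\beta_+}(\beta,\beta_+) < 1$, and continuity of $\aalpha \mapsto d(\aalpha,\1/2d)$ together with $\aalpha_n \to \1/2d$ ensures $(\aalpha_n,\beta) \in \Lambda \coloneqq \Lambda(\beta_-,\beta_+,\aalpha_+)$ for $n$ large. The proposition then yields $p > 1$ with $\sup_{(\aalpha,\beta')\in\Lambda}\|W_n^{\beta',\aalpha} - W_\infty^{\beta',\aalpha}\|_p \xrightarrow{n\to\infty} 0$.

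For (i), I run an $\eps/3$-argument. Given $\eps > 0$, I first pick $N_0$ large enough that $\|W_{N_0}^\beta - W_\infty^\beta\|_p < \eps$ and $\sup_{(\aalpha,\beta')\in\Lambda}\|W_n^{\beta',\aalpha} - W_{N_0}^{\beta',\aalpha}\|_p < \eps$ for all $n \geq N_0$. For this fixed $N_0$, the random variable $W_{N_0}^{\beta,\aalpha}$ is a polynomial of degree $N_0$ in the components of $\aalpha$, whose random coefficients are deterministically bounded thanks to \eqref{eq:bd}, so dominated convergence gives $\|W_{N_0}^{\beta,\aalpha_n} - W_{N_0}^\beta\|_p \to 0$ as $n \to \infty$. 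The triangle inequality then yields $\|W_n^{\beta,\aalpha_n} - W_\infty^\beta\|_p \leq 3\eps + o(1)$ for $n$ large, hence $W_n^{\beta,\aalpha_n} \to W_\infty^\beta$ in $L^p$ and thus in probability.

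For (ii), a change of measure from $P^{\1/2d}$ to $P^{\aalpha(\llambda/\sqrt n)}$ gives
\[
\mu_{\omega,n}^\beta\bigl[e^{\llambda \cdot X_n/\sqrt n}\bigr] = e^{n\tilde\lambda(\llambda/\sqrt n)}\,\frac{W_n^{\beta,\aalpha(\llambda/\sqrt n)}}{W_n^\beta}, \qquad \tilde\lambda(\tilde\llambda) \coloneqq \log\Bigl((2d)^{-1}\sum_{v\in\U}e^{\tilde\llambda \cdot v}\Bigr).
\]
A Taylor expansion yields $n\tilde\lambda(\llambda/\sqrt n) \to \|\llambda\|^2/(2d)$. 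Since $W_\infty^\beta > 0$ almost surely by weak disorder and \eqref{eq:zero_one}, part~(i) combined with martingale convergence for $W_n^\beta$ gives $\mu_{\omega,n}^\beta[e^{\llambda \cdot X_n/\sqrt n}] \to e^{\|\llambda\|^2/(2d)}$ in probability. Specializing to $\llambda = Ke_1$ and using Markov's inequality,
\[
\mu_{\omega,n}^\beta(X_n \cdot e_1 \geq K\sqrt n) \leq e^{-K^2}\,\mu_{\omega,n}^\beta\bigl[e^{KX_n \cdot e_1/\sqrt n}\bigr].
\]
On the event $\{\mu_{\omega,n}^\beta[e^{KX_n \cdot e_1/\sqrt n}] \leq 2e^{K^2/(2d)}\}$, whose probability tends to $1$, the right-hand side is at most $2e^{-K^2(2d-1)/(2d)} \leq 2e^{-K^2/2}$ for $d \geq 1$, establishing \eqref{eq:tightinprob}.

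The main obstacle is in (i): a direct attack on $W_n^{\beta,\aalpha_n} - W_n^\beta$ via the change-of-measure identity is circular, since this difference is essentially the Laplace-transform convergence sought in (ii). The $\eps/3$ reduction to a finite time $N_0$ sidesteps the circularity by exploiting the elementary continuity in $\aalpha$ at finite times together with the uniform-in-$\aalpha$ $L^p$ convergence provided by Proposition~\ref{prop:drift}(ii).
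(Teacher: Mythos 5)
Your proof is correct and follows essentially the same strategy as the paper: part (i) via Proposition~\ref{prop:drift}(ii) combined with finite-time continuity of $\aalpha\mapsto W_N^{\beta,\aalpha}$, and part (ii) via the change-of-measure identity $\mu_{\omega,n}^\beta[e^{\llambda\cdot X_n/\sqrt n}] = E[e^{\llambda\cdot X_n/\sqrt n}]\,W_n^{\beta,\aalpha(\llambda/\sqrt n)}/W_n^\beta$ together with Markov's inequality. The only differences of substance are that you establish convergence in $L^p$ rather than merely in probability for part (i), and that in part (ii) you track the exact limiting constant $e^{-K^2(2d-1)/(2d)}\leq e^{-K^2/2}$, which the paper's displayed $\xrightarrow{P}e^{-K^2/2}$ suppresses (the step variance of $X_n\cdot e_1$ is $1/d$, not $1$).
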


\begin{proof}[Proof of Theorem~\ref{thm:clt_prob}(ii)]
By a well-known criterion for convergence in probability in metric spaces, it is enough to show that any subsequence $(n_k)_{k\in\N}$ has a sub-subsequence $(n_{k_l})_{l\in\N}$ such that almost surely
\begin{align*}
\lim_{l\to\infty}d_P\left(\mu_{\omega,n_{k_l}}^\beta\Big(\frac{X_{n_{k_l}}}{\sqrt {n_{k_l}}}\in\cdot \Big),\mathcal N\right)=0.
\end{align*}
We show  that we can find $(n_{k_l})_{l\in\N}$ such that almost surely,
\begin{itemize}
 \item For every $K\in\N$ and for all but finitely many $l$,
 \begin{align*}
\mu_{\omega,n_{k_l}}^\beta\left(|X_{n_{k_l}}|>K\sqrt {n_{k_l}} \right)\leq 4de^{-K^2/2}.
\end{align*}
 \item For every $\llambda\in\Q^d$, 
 \begin{align*}
\lim_{l\to\infty}\mu_{\omega,n_{k_l}}^\beta\left[e^{\llambda\cdot X_{n_{k_l}}/\sqrt {n_{k_l}}}\right]=e^{\frac 12\|\llambda\|_2^2}.
\end{align*}
\end{itemize}
Indeed, the first condition shows that $\big\{\mu_{\omega,n_{k_l}}^\beta({X_{n_{k_l}}}/{\sqrt {n_{k_l}}}\in\cdot )\colon l\in\N\big\}$ is tight and the second condition that the moment generating function of any weak accumulation point agrees with that of the normal distribution on $\Q^d$, and hence on $\R^d$. 

\smallskip Let $\llambda_1,\llambda_2,\dots$ be an enumeration of $\Q^d$. Let $k_1\coloneqq 1$ and, given $k_{l-1}$, choose $k_{l}\geq k_{l-1}$ large enough that
\begin{align}
\P\left(\mu_{\omega,n_{k_{l}}}^\beta (|X_{n_{k_{l}}}|\geq K\sqrt {n_{k_{l}}})<4de^{-K^2/2}\text{ for all }K=1,\dots,l\right)&\geq 1-\frac 1{l^2},\label{eq:first}\\
\P\left(\left|\mu_{\omega,n_{k_{l}}}\left[e^{\llambda_i\cdot X_{n_{k_{l}}}/\sqrt{n_{k_{l}}}}\,\right]-E\left[e^{\llambda_i\cdot X_{n_{k_{l}}}/\sqrt{n_{k_{l}}}}\,\right]\right|<\frac{1}{l}\text{ for all }i=1,\dots,l\right)&\geq 1-\frac 1{l^2}.\label{eq:second}
\end{align}
The claim then follows by the Borel-Cantelli Lemma and the central limit theorem for the simple random walk. Indeed, \eqref{eq:first} follows from \eqref{eq:tightinprob} and  we observe that
\begin{align}\label{eq:drift_mgf}
\mu_{\omega,n_{k_{l}}}\left[e^{\llambda_i\cdot X_{n_{k_{l}}}/\sqrt{n_{k_{l}}}}\right]=\frac{W^{\beta,\aalpha(\llambda_i/\sqrt {n_{k_l}})}_{n_{k_l}}}{W^{\beta}_{n_{k_l}}}E\left[e^{\llambda_i\cdot X_{n_{k_{l}}}/\sqrt{n_{k_{l}}}}\,\right].
\end{align}
Thus \eqref{eq:second} follows from \eqref{eq:inprob}.
\end{proof}

\begin{proof}[Proof of Lemma~\ref{lem:mgf_in_prob}]
Let $\delta>0$. By \eqref{eq:W_nW_infty}, there exists $N\in\N$ such that
\begin{align*}
\sup_{n\geq N}\sup_{\aalpha\in(\1/2d+[-\alpha_0,\alpha_0]^d)\cap\M(\U)}\|W_n^{\beta,\aalpha}-W_\infty^{\beta,\aalpha}\|_1\leq \frac{\delta\eps}{3}.
\end{align*}
Note that $\aalpha(\llambda/\sqrt n)\in\1/2d+[-\alpha_0(\beta),\alpha_0(\beta)]^d$ for $n$ large enough. Thus, for any $n\geq N$ large enough,
\begin{align*}
\P\left(\left|W^{\beta,\aalpha(\llambda /\sqrt n)}_n-W^{\beta}_\infty\right|>\eps \right)
&\leq \P\left(\left|W^{\beta,\aalpha(\llambda /\sqrt n)}_{N}-W^{\beta}_{N}\right|>\eps \right)+\delta.
\end{align*}
The claim now follows because $\llambda\mapsto W_N^{\beta,\aalpha(\llambda)}(\omega)$ is continuous for fixed $N$. For the second claim, by the central limit theorem for the simple random walk and \eqref{eq:inprob},
\begin{align*}
\mu_{\omega,n}^\beta(X_n\cdot e_1\geq K\sqrt n)&=\mu_{\omega,n}^\beta(e^{KX_n\cdot e_1/\sqrt n}\geq e^{K^2})\\
&\leq e^{-K^2}E[e^{X\cdot e_1K/\sqrt n}]\frac{W^{\beta,\aalpha(Ke_1/\sqrt n)}_n}{W_n^{\beta}(\omega)}
\\
&\xrightarrow[n\to\infty]Pe^{-K^2/2}.\tag*{\qedhere}
\end{align*}
\end{proof}

\begin{remark}\label{rm:invariance}
We briefly discuss the steps needed to obtain a full invariance principle, i.e., that 
\begin{align*}
d_P\Big( \mu_{\omega,n}^\beta\big((X_{tn}/\sqrt n)_{t\in[0,1]}\in\cdot\big), \PBM\Big)\xrightarrow[n\to\infty]P0,
\end{align*}
where $d_P$ now denotes the Prokhorov metric on $C([0,1],\R^d)$, the space of continuous functions equipped with the uniform topology. First, to show that the marginals $(X_{t_1n}/\sqrt n,dots ,X_{t_kn}/\sqrt n)$ converge to the corresponding marginals of $P^{BM}$, we need a time-inhomogeneous version of \eqref{eq:inprob}, i.e., with the underlying random walk $P^{\aalpha(\llambda/\sqrt n)}$ replaced by a random walk with different (vanishing) drifts in each of the intervals $[0,t_1n],[t_1n,t_2n],dots ,[t_{k-1}n,t_kn]$. This requires a straightforward generalization of Lemma~\ref{lem:strange_coupling} to a time-dependent resampling rate $\rrho$. From there, the invariance principle follows by proving tightness, i.e., that every subsequence $(n_k)_{k\in\N}$ has a sub-subsequence $(n_{k_l})_{l\in\N}$ such that 
\begin{align*}
\mu_{\omega,n_{k_l}}^\beta\Big(\Big(\frac{X_{\lfloor t{n_{k_l}}\rfloor}}{\sqrt {n_{k_l}}}\Big)_{t\in[0,1]}\in\cdot \Big)
\end{align*}
is almost surely tight. This can be proved by following the argument in \cite[Section 4.2]{G21}.
\end{remark}

\subsection{Proof of the LDP}\label{sec:proof_ldp}

\begin{proof}[Proof of Theorem~\ref{thm:ldp}]
Since $\beta<\overline{\beta}_{cr}$, we find $\eps>0$ small enough that $\p(\beta+\eps)=0$. For part (i), we apply Proposition~\ref{prop:drift_free} with $\beta_1=\beta_-\coloneqq\beta$, $\beta_2=\beta_+\coloneqq\beta+\eps$ and $\aalpha_2=\1/2d$. By Theorem~\ref{thm:semigroup}(ii), we have $\rho^{\beta_-,\beta_+}_0(\beta_1,\beta_2)<1$. Since $\aalpha\mapsto d(\aalpha,\aalpha_2)$ is continuous, we find $\alpha_0>0$ such that \eqref{eq:thisthis} holds for all $\aalpha_1\in (\1/2d+[-\alpha_0,\alpha_0]^d)\cap\M(\U)$. Thus
\begin{align*}
0\geq \p(\beta_1,\aalpha_1)\geq (1-m(\aalpha_1|\aalpha_2))\p(\beta_2,\aalpha_2)=0.
\end{align*}
Recall \eqref{eq:alpha_lambda}. Part (ii) follows from the G\"artner-Ellis theorem \cite[Theorem 2.3.6]{DZ} and
\begin{align}
\lim_{n\to\infty}\frac 1n\log \mu_{\omega,n}^\beta[e^{\llambda\cdot X_n}]&=\lim_{n\to\infty}\frac 1n\log E[e^{\llambda\cdot X_n}]+\p(\beta,\aalpha(\llambda))-\p(\beta)\label{eq:this_eq}\\
&=\lim_{n\to\infty}\frac 1n\log E[e^{\llambda\cdot X_n}],\notag
\end{align}
where the second equality holds for all $\llambda\in\R^d$ small enough by part (i). Finally, for part (iii), we note $\p(\beta)=0$ for $\beta\leq\overline\beta_{cr}$ and $\p(\beta,\aalpha)\leq 0$ for all $\beta\geq 0$ and $\aalpha\in\M(\U)$. Hence \eqref{eq:this_eq} implies
\begin{align*}
\lim_{n\to\infty}\frac 1n\log \mu_{\omega,n}^\beta[e^{\llambda\cdot X_n}]\leq \lim_{n\to\infty}\frac 1n\log E[e^{\llambda\cdot X_n}].
\end{align*}
The conclusion again follows by the G\"artner-Ellis theorem.
\end{proof}

\subsection{Proof of the almost sure CLT}\label{sec:proof_clt_as}

We first prove a version of the ``defective'' result, Theorem~\ref{thm:clt_as_bond}, for the Poissonian case. To obtain Theorem~\ref{thm:clt_as_pois}, we then use the shift invariance of the Brownian polymer.

\begin{proposition}\label{prop:defect}
Under the assumptions of Theorem~\ref{thm:clt_as_bond}, for $\Pp$-almost all $\omega$ there exists a Borel set $\Lambda(\omega)\subseteq[0,1]^d$ such that $|\Lambda(\omega)|=1$ and such that, for all $\llambda\in \Lambda(\omega)$,
\begin{align*}
\lim_{t\to\infty}d_P\left(\mu_{\omega,t}^{\beta,\llambda}\Big(\frac{X_t-\llambda t}{\sqrt t}\in\cdot\Big),\mathcal N\right)=0.
\end{align*}
\end{proposition}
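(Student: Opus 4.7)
The plan hinges on the shift invariance $W_t^{\beta,\llambda}(\omega)=W_t^{\beta}(\omega(\llambda))$ noted at the start of Section~\ref{sec:as}, which gives that $W_t^{\beta,\llambda}$ is distributed as $W_t^{\beta}$ under $\Pp$ for every $\llambda\in\R^d$. Combined with Theorem~\ref{thmx:sj}, this yields two uniform-in-$\llambda$ facts: first, $W_t^{\beta,\llambda}\to W_\infty^{\beta,\llambda}$ in $L^p(\Pp)$ for some $p>1$ at a rate not depending on $\llambda$; second, $\Pp(W_\infty^{\beta,\llambda}>0)=1$ for every $\llambda$ (weak disorder plus the zero-one law). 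Mimicking the proof of Theorem~\ref{thm:clt_prob}(ii), I would reduce the CLT to the moment generating function identity
\begin{equation*}
\mu_{\omega,t}^{\beta,\llambda}\bigl[e^{\mu\cdot(B_t-\llambda t)/\sqrt t}\bigr]=e^{\|\mu\|^2/2}\,\frac{W_t^{\beta,\llambda+\mu/\sqrt t}(\omega)}{W_t^{\beta,\llambda}(\omega)},
\end{equation*}
so the task becomes showing, for $\Pp$-a.s.\ $\omega$ and Lebesgue-a.e.\ $\llambda\in[0,1]^d$, that the ratio tends to $1$ for every $\mu$ in a countable dense subset of $\R^d$. Exponential tightness of the rescaled endpoint measures follows by Chernoff applied to the same MGF, using the uniform $L^p$-control.

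The first step is Fubini. Applying it to the pointwise martingale convergence at each fixed $\llambda$ gives, for $\Pp$-a.s.\ $\omega$, $W_t^{\beta,\llambda}(\omega)\to W_\infty^{\beta,\llambda}(\omega)>0$ for Leb-a.e.\ $\llambda\in[0,1]^d$. Since $\int_{[0,1]^d}\sup_t W_t^{\beta,\llambda}(\omega)^p\,d\llambda$ is $\Pp$-integrable (Doob plus shift invariance), dominated convergence on the $\llambda$-integral upgrades this to $\|W_t^{\beta,\cdot}(\omega)-W_\infty^{\beta,\cdot}(\omega)\|_{L^p([0,1]^d)}\to 0$ almost surely. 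For the perturbed version I would split
\begin{equation*}
\E\int_{[0,1]^d}\bigl|W_t^{\beta,\llambda+\mu/\sqrt t}-W_\infty^{\beta,\llambda}\bigr|^p d\llambda\leq 2^{p-1}\bigl\|W_t^\beta-W_\infty^\beta\bigr\|_p^p+2^{p-1}\bigl\|W_\infty^{\beta,\mu/\sqrt t}-W_\infty^\beta\bigr\|_p^p,
\end{equation*}
where the first term is the $L^p$-martingale gap and the second vanishes as $t\to\infty$ by $L^p$-continuity of $\llambda\mapsto W_\infty^{\beta,\llambda}$ at $0$, itself a triangle-inequality consequence of uniform $L^p$-convergence of the martingale together with analyticity of $\llambda\mapsto W_t^{\beta,\llambda}(\omega)$ at each fixed $t$.

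The main obstacle is passing from $L^1(\Pp)$ convergence of the $\llambda$-averaged error to almost sure convergence of the ratio $W_t^{\beta,\llambda+\mu/\sqrt t}/W_t^{\beta,\llambda}$ along the full sequence $t\to\infty$, since the pointwise martingale convergence at a fixed $\llambda$ cannot be invoked on the slowly moving argument $\llambda+\mu/\sqrt t$. The cleanest route, and what I would attempt, is to show that $\llambda\mapsto W_\infty^{\beta,\llambda}(\omega)$ admits a continuous modification for $\Pp$-a.s.\ $\omega$: the prelimit $\llambda\mapsto W_t^{\beta,\llambda}(\omega)$ is log-convex and smooth, and the uniform $L^p$-bounds should transfer this regularity to the limit. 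With such a continuous modification on a set $\Lambda(\omega)\subseteq[0,1]^d$ of full Lebesgue measure, the vanishing perturbation $\mu/\sqrt t\to 0$ is absorbed by continuity and the full-sequence martingale convergence at each $\llambda\in\Lambda(\omega)$ gives $W_t^{\beta,\llambda+\mu/\sqrt t}(\omega)/W_t^{\beta,\llambda}(\omega)\to 1$ for all $\mu\in\Q^d$ simultaneously. Combined with the tightness bound, this yields the CLT for every $\llambda\in\Lambda(\omega)$ and completes Proposition~\ref{prop:defect}.
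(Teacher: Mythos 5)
Your proposal identifies the right obstruction but does not resolve it; in fact the paper explicitly flags this as the sticking point (``We do not know how to prove this directly'') and deliberately routes around it. The key unproved claim in your argument is that $\llambda\mapsto W_\infty^{\beta,\llambda}(\omega)$ admits a continuous modification. Neither of the ingredients you cite delivers this. Uniform $L^p$-bounds in $(t,\llambda)$, together with smoothness of $\llambda\mapsto W_t^{\beta,\llambda}(\omega)$ at each fixed $t$, yield only $L^p(\Pp)$-continuity of the \emph{limit} in $\llambda$, and $L^p$-continuity of a process does not imply the existence of a continuous modification; one would need a Kolmogorov-type bound $\E|W_\infty^{\beta,\llambda}-W_\infty^{\beta,\llambda'}|^p\leq C|\llambda-\llambda'|^{d+\eps}$, which is nowhere established. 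The log-convexity you invoke is also not available: writing $W_t^{\beta,\llambda}=e^{-t\|\llambda\|_2^2/2}\,\E^{\textup{BM}}\bigl[e^{\llambda\cdot B_t}e^{\beta H_t-t(e^{\lambda(\beta)}-1)}\bigr]$ shows that $\log W_t^{\beta,\llambda}$ is a concave quadratic plus a convex function, so it is neither convex nor concave in $\llambda$, and the classical argument for pointwise limits of convex functions does not apply.

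There is a second, independent gap. Even granting a continuous modification of $W_\infty^{\beta,\cdot}(\omega)$, the quantity you must control is $W_t^{\beta,\llambda+\mu/\sqrt t}(\omega)-W_\infty^{\beta,\llambda}(\omega)$, which splits as $\bigl[W_t^{\beta,\llambda+\mu/\sqrt t}-W_\infty^{\beta,\llambda+\mu/\sqrt t}\bigr]+\bigl[W_\infty^{\beta,\llambda+\mu/\sqrt t}-W_\infty^{\beta,\llambda}\bigr]$. Continuity handles the second bracket but not the first, which is the martingale error evaluated at a $t$-dependent argument; a.s.\ martingale convergence at each fixed $\llambda'$ does not give $\sup_{\llambda'\in K}S_T(\omega,\llambda')\to 0$, and without that uniformity the moving-argument limit is not justified. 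The paper sidesteps both issues at once: rather than pointwise convergence of $W_t^{\beta,\llambda+\mu/\sqrt t}$, Lemma~\ref{lem:mgf_as} shows via the Lebesgue differentiation theorem (applied to $S_T(\omega,\llambda)$, which is $\Pp$-a.s.\ in $L^1(\dd\llambda)$ and tends to $0$ $\Pp\otimes\textup{Leb}$-a.e.) that the \emph{spatial average} $\frac{1}{|E_t|}\int_{\llambda+E_t}W_t^{\beta,\llambda'}(\omega)\,\dd\llambda'$ converges to $W_\infty^{\beta,\llambda}(\omega)$ at every Lebesgue point $\llambda$. Correspondingly, the MGF step is modified: the rescaled endpoint law is first convolved with a $\operatorname{Unif}([0,\eps]^d)$ scaling noise, whose MGF is precisely such a spatial average over a set shrinking like $1/\sqrt t$; the smeared measures then converge a.s., and Lemma~\ref{lem:weak_conv} removes the smearing. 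Your tightness step via Chernoff is consistent with the paper, but the heart of the argument requires this averaging device, not a continuity argument.
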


\begin{proof}[Proof of Theorem~\ref{thm:clt_as_pois} assuming Proposition~\ref{prop:defect}]
Recall the definition of $\omega(\llambda)$ from \eqref{eq:omega_alpha} and note that 
\begin{align*}
\mu_{\omega,t}^{\beta,\llambda}\Big(\frac{X_t-\llambda t}{\sqrt t}\in\cdot \Big)=\mu_{\omega(\llambda),t}^{\beta}\Big(\frac{X_t}{\sqrt t}\in\cdot \Big).
\end{align*}
Hence $\0\in \Lambda(\omega(\llambda))\iff \llambda\in \Lambda(\omega)$ and, since $\omega(\llambda)$ has the same law as $\omega$ for any fixed $\llambda\in\R^d$,
\begin{align*}
&\P\left(\lim_{t\to\infty}d_P\left(\mu_{\omega,t}^\beta\Big(\frac{X_t}{\sqrt t}\in\cdot\Big),\mathcal N\right)=0\right)\geq \P(\0\in \Lambda(\omega))=\int_{[0,1]^d}\P(\llambda\in \Lambda(\omega))\,\dd\llambda=1.
\tag*{\qedhere}\end{align*}
\end{proof}

We prove Proposition~\ref{prop:defect} using an almost sure version of Lemma~\ref{lem:mgf_in_prob}, so we need to exclude the possibility that $W_t^{\beta,\llambda+\llambda'/\sqrt t}$ does not converge to $W_\infty^{\beta,\llambda}$ for a positive proportion of $\llambda$. We do not know how to prove this directly, so instead we prove convergence after taking an average over a suitably large neighborhood of $\llambda'$. Intuitively, we show that the convergence holds for all \emph{Lebesgue points} of $\llambda\mapsto W_\infty^{\beta,\llambda}$. We first recall the necessary background from real analysis.
\begin{definition}
A sequence $(E_t)_{t\geq 0}$ of Borel sets is called \emph{nicely shrinking} to $x\in\R^d$ if there exists a positive sequence $(r_t)_{t>0}$ such that $\lim_{t\to\infty}r_t=0$, $E_t\subseteq B(x,r_t)$ and $\liminf_{t\to\infty}\frac{|E_t|}{|B(x,r_t)|}>0$.
\end{definition}

Note that the definition does not require $x\in E_t$ for any $t$.

\begin{thmx}[{\cite[Theorem 7.10]{Rudin}}]\label{thmx:rudin}
Let $f\colon \R^d\to\R$ be Lebesgue integrable. There exists $A\in\mathcal B(\R^d)$ such that $A^c$ has Lebesgue-measure zero and such that, for every $x\in A$ and for every sequence $(E_t)_{t\geq 0}$ nicely shrinking to $x$,
\begin{align*}
\lim_{t\to\infty}\frac 1{|E_t|}\int_{E_t}f(y)\dd y=f(x).
\end{align*}
\end{thmx}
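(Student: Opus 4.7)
The plan is to prove the theorem in two stages: first establishing the standard Lebesgue differentiation theorem for balls $B(x,r)$ as $r \to 0$, and then upgrading from balls to arbitrary nicely shrinking sequences. The main analytic tool for the first stage is the Hardy--Littlewood maximal function
\[
Mf(x) \coloneqq \sup_{r>0}\frac{1}{|B(x,r)|}\int_{B(x,r)}|f(y)|\,\dd y,
\]
together with its weak-type $(1,1)$ inequality $|\{Mf > \lambda\}| \leq \frac{C_d}{\lambda}\|f\|_1$. This inequality I would establish via the Vitali $5r$-covering lemma: a compact subset of $\{Mf > \lambda\}$ is covered by balls $B(x,r_x)$ whose averages of $|f|$ exceed $\lambda$, and a disjoint subcollection extracted by a greedy procedure has total measure at most $\|f\|_1/\lambda$, while its fivefold dilates still cover the compact set.

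Given the maximal inequality, the classical density argument handles the ball case. Continuous compactly supported functions $g$ are dense in $L^1(\R^d)$, and for such $g$ the limit $\frac{1}{|B(x,r)|}\int_{B(x,r)} g(y)\,\dd y \to g(x)$ holds for every $x$ by uniform continuity. For arbitrary $f \in L^1$ and any $\eps>0$ decompose $f = g + h$ with $\|h\|_1 < \eps$, and bound
\[
\limsup_{r\to 0}\Big|\tfrac{1}{|B(x,r)|}\textstyle\int_{B(x,r)}f(y)\,\dd y - f(x)\Big|\leq Mh(x) + |h(x)|.
\]
Markov's inequality and the weak-type bound show that the set where the right side exceeds any fixed $\eta>0$ has measure at most $(C_d+1)\eps/\eta$; letting $\eps \to 0$ and intersecting over rational $\eta$ produces a full-measure Borel set on which the ball averages converge to $f(x)$. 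Applying the same argument to $|f-q|$ for each $q \in \Q$ and intersecting yields a full-measure set $A$ of Lebesgue points, i.e. points where $\frac{1}{|B(x,r)|}\int_{B(x,r)} |f(y)-f(x)|\,\dd y \to 0$.

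The passage from balls to nicely shrinking sets is then essentially immediate, which is the point of phrasing the theorem in terms of Lebesgue points. Fix $x \in A$ and a sequence $(E_t)_{t\geq 0}$ nicely shrinking to $x$ with associated radii $r_t \downarrow 0$ and density constant $c \coloneqq \liminf_{t\to\infty}|E_t|/|B(x,r_t)| > 0$. For $t$ large,
\[
\Big|\tfrac{1}{|E_t|}\textstyle\int_{E_t}f\,\dd y - f(x)\Big|\leq \tfrac{1}{|E_t|}\textstyle\int_{E_t}|f(y)-f(x)|\,\dd y \leq \tfrac{|B(x,r_t)|}{|E_t|}\cdot \tfrac{1}{|B(x,r_t)|}\textstyle\int_{B(x,r_t)}|f(y)-f(x)|\,\dd y,
\]
and the first factor is bounded by $2/c$ while the second tends to $0$ by the Lebesgue point property. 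The only real work is the Vitali covering argument behind the maximal inequality; the approximation step and the extension to nicely shrinking sets are then routine, with the nicely-shrinking hypothesis entering only through the uniform lower bound on $|E_t|/|B(x,r_t)|$.
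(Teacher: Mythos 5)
Your proposal is correct, and it is essentially the argument of the cited source itself (Rudin, Chapter 7): the paper does not prove this statement but quotes it, and Rudin's proof proceeds exactly via the Vitali covering lemma, the weak-type $(1,1)$ maximal inequality, approximation by continuous functions to get the ball-average theorem, passage to Lebesgue points via $|f-q|$, $q\in\Q$, and the observation that the nicely-shrinking hypothesis only enters through the lower bound on $|E_t|/|B(x,r_t)|$. Nothing further is needed.
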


We now prove an almost sure version of Lemma~\ref{lem:mgf_in_prob}.

\begin{lemma}\label{lem:mgf_as}
For $\Pp$-almost all $\omega$ there exists $\Lambda(\omega)\in\mathcal B([0,1]^d)$ such that $|\Lambda(\omega)|=1$ and such that for all $\llambda\in \Lambda(\omega)$ and any bounded $E\in\mathcal B(\R^d)$ with $|E|>0$,
\begin{align}\label{eq:mgf_as}
\lim_{t\to\infty} \frac{1}{|E_t|}\int_{\llambda+E_t}W_t^{\beta,\llambda'}(\omega)\dd \llambda'=W_\infty^{\beta,\llambda}(\omega),
\end{align}
where $E_t\coloneqq E/\sqrt t$.
\end{lemma}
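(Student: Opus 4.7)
My plan is to work with the two-term decomposition
\begin{align*}
\frac{1}{|E_t|}\int_{\llambda+E_t}W_t^{\beta,\llambda'}(\omega)\,\dd\llambda' - W_\infty^{\beta,\llambda}(\omega) = A_t(\omega,\llambda) + B_t(\omega,\llambda),
\end{align*}
where $A_t := |E_t|^{-1}\int_{\llambda+E_t}(W_t^{\beta,\llambda'}-W_\infty^{\beta,\llambda'})\,\dd\llambda'$ is a ``martingale tail'' and $B_t := |E_t|^{-1}\int_{\llambda+E_t}W_\infty^{\beta,\llambda'}\,\dd\llambda' - W_\infty^{\beta,\llambda}$ is a Lebesgue-differentiation error, and to handle these two terms by completely different tools.

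For $B_t$, I would first note that Fatou gives $\E[W_\infty^{\beta,\llambda}]\leq 1$ for every $\llambda$, so Fubini shows that $\llambda\mapsto W_\infty^{\beta,\llambda}(\omega)$ is locally integrable for $\Pp$-a.e.\ $\omega$. Writing $R := \sup_{x\in E}|x|$, the sequence $(\llambda+E/\sqrt t)_{t\geq 1}$ is nicely shrinking to $\llambda$ in the sense of Theorem \ref{thmx:rudin} (it sits inside $B(\llambda,R/\sqrt t)$ with volume ratio equal to the $t$-independent constant $|E|/|B(0,R)|>0$), so that theorem yields $B_t(\omega,\llambda)\to 0$ at every Lebesgue point of $W_\infty^{\beta,\cdot}(\omega)$. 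These points form a full-measure set $\Lambda_0(\omega)\subseteq[0,1]^d$ that does not depend on $E$.

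The harder part is to show $A_t\to 0$ on a second full-measure set $\Lambda_1(\omega)$. Here I would introduce the tail $D_T(\omega,\llambda) := \sup_{t\geq T}|W_t^{\beta,\llambda}-W_\infty^{\beta,\llambda}|(\omega)$. The shift invariance $W_t^{\beta,\llambda}(\omega)=W_t^\beta(\omega(\llambda))$ from Section \ref{sec:as} makes $D_T(\cdot,\llambda)\isDistr D_T(\cdot,\0)$ for every $\llambda$, and Theorem \ref{thmx:sj} combined with Doob's $L^p$-maximal inequality gives $\sup_t W_t^\beta\in L^p$ for some $p>1$; dominated convergence then delivers $\|D_T(\cdot,\0)\|_p\to 0$ as $T\to\infty$. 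For $t\geq T$, pointwise in $(\omega,\llambda)$,
\begin{align*}
|A_t(\omega,\llambda)|\leq\frac{1}{|E_t|}\int_{\llambda+E_t}D_T(\omega,\llambda')\,\dd\llambda'\leq \frac{|B(0,R)|}{|E|}\,M[D_T(\omega,\cdot)](\llambda),
\end{align*}
where $M$ is the centered Hardy--Littlewood maximal operator; since the constant on the right is $t$-independent, taking $\sup_{t\geq T}$ on the left preserves the inequality. Choosing $T_n\to\infty$ with $\E[D_{T_n}(\cdot,\0)^p]\leq 2^{-n}$ and combining Fubini with the $L^p$-boundedness of $M$ (applied to a truncation of $D_{T_n}$ to a bounded enlargement $\mathcal K$ of $[0,1]^d$) and with shift invariance, I would deduce $\sum_n\E\int_{[0,1]^d}M[D_{T_n}](\llambda)^p\,\dd\llambda\leq C\sum_n |\mathcal K|\,\E[D_{T_n}(\cdot,\0)^p]<\infty$. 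Hence, for $\Pp$-a.e.\ $\omega$, there is a full-measure $\Lambda_1(\omega)\subseteq[0,1]^d$ on which $M[D_{T_n}](\llambda)\to 0$, and the monotonicity of $T\mapsto D_T$ upgrades this to convergence along the continuous parameter, giving $A_t(\omega,\llambda)\to 0$ for $\llambda\in\Lambda_1(\omega)$. Setting $\Lambda(\omega):=\Lambda_0(\omega)\cap\Lambda_1(\omega)$ completes the argument; neither component depends on $E$, so the conclusion applies to every bounded $E$ with $|E|>0$.

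The main obstacle will be the conjunction of two effects that each look routine on their own but together resist a direct approach: $t$ ranges continuously to infinity, while the integration window $\llambda+E_t$ shrinks at the parallel rate $t^{-1/2}$. A naive fixed-$t$ $L^p$-Fubini bound does not upgrade to almost sure convergence of $\sup_{t\geq T}|A_t|$. The Hardy--Littlewood maximal function is the key device: it dominates all times past $T$ by a single Borel function of $\llambda$, and its $L^p$-boundedness converts the $L^p$-decay of $D_T$ into the required pointwise convergence via Borel--Cantelli.
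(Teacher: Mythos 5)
Your proof is correct in substance, but it takes a genuinely different route from the paper's. The paper decomposes the error around $W_k$ for a \emph{fixed finite} $k$, giving three terms: the averaged $W_k$ minus $W_k^{\beta,\llambda}$ (handled by continuity of $\llambda'\mapsto W_k^{\beta,\llambda'}(\omega)$, not by Lebesgue differentiation), plus $S_k(\omega,\llambda)$ and the averaged $S_k$; it then applies Theorem~\ref{thmx:rudin} separately to each $S_k(\omega,\cdot)$, intersects the resulting full-measure sets over $k\in\N$, and sends $k\to\infty$ using $S_k\to 0$. Crucially this works entirely in $L^1$: only \eqref{eq:sup_integrable} is used, not the $L^p$ bound \eqref{eq:Lp_bd}. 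You instead split around $W_\infty$, apply Theorem~\ref{thmx:rudin} once to $W_\infty^{\beta,\cdot}(\omega)$, and control the martingale-tail integral via the Hardy--Littlewood maximal function, whose $L^p$-boundedness forces you to invoke $\sup_t W_t^\beta\in L^p$ for some $p>1$, i.e.\ \eqref{eq:Lp_bd} together with Doob. Both are legitimate, and yours is conceptually clean, but the paper's avoids the maximal-function machinery and remains valid under the weaker hypothesis \eqref{eq:sup_integrable} alone. One small point you should tighten: after truncating $D_{T_n}$ to a bounded set $\mathcal K\supsetneq[0,1]^d$, the inequality $|A_t|\leq C\, M[D_{T_n}\ind_{\mathcal K}](\llambda)$ only holds once $\llambda+E_t\subseteq\mathcal K$; since $E_t\subseteq B(0,R/\sqrt t)$ shrinks, this is automatic for $t$ large (depending on $R$ and on how much $\mathcal K$ enlarges $[0,1]^d$), so the resulting $\limsup_{t\to\infty}|A_t|\leq C\inf_n M[D_{T_n}\ind_{\mathcal K}](\llambda)$ is fine and the set $\Lambda_1(\omega)$ built from a single fixed $\mathcal K$ indeed serves all bounded $E$ -- but this needs to be said explicitly, since as written your displayed bound $\sup_{t\geq T}|A_t|\leq C\,M[D_T\ind_{\mathcal K}](\llambda)$ is not literally true for $T$ too small relative to $R$.
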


The proof is postponed until after Theorem~\ref{thm:clt_as_pois}. To apply Lemma~\ref{lem:mgf_as}, we use the following easy approximation result whose proof is skipped. We write $x*y$ for the element-wise product $(x_1y_1,\dots,x_dy_d)\in\R^d$ of two vectors $x,y\in\R^d$.

\begin{lemma}\label{lem:weak_conv}
Let $\nnu_\infty$ and $(\nnu_t)_{t\geq 0}$ be probability measures on $\R^d$. For $\eps>0$ and $t\in\R_+\cup\{\infty\}$, let $\nnu_t^\eps$ denote the law of $X*(\1+Y)$ where $X$ and $Y$ are independent with laws $\nnu_t$ and $\operatorname{Unif}([0,\eps]^d)$. If 
\begin{align*}
\nnu_t^\eps\xrightarrow[t\to\infty]w\nnu_\infty^\eps
\end{align*}
for every $\eps>0$, then $\nnu_t\xrightarrow[t\to\infty]w\nnu_\infty$.
\end{lemma}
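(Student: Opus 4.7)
The plan is to prove the lemma by first establishing tightness of $(\nnu_t)_{t\geq 0}$, then identifying every subsequential weak limit with $\nnu_\infty$, from which the weak convergence of the full sequence follows by a standard subsequence argument.

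First, I would establish tightness. The key observation is that since $Y_i \geq 0$, we have $1+Y_i \geq 1$ and hence $|X_i(1+Y_i)| \geq |X_i|$ coordinate-wise, so $\|X*(\1+Y)\|_\infty \geq \|X\|_\infty$. This yields, for every $R > 0$ and any fixed $\eps > 0$,
\[
\nnu_t(\{x \colon \|x\|_\infty > R\}) \leq \nnu_t^\eps(\{y \colon \|y\|_\infty > R\}).
\]
Since $\nnu_t^\eps \to \nnu_\infty^\eps$ weakly, the family $(\nnu_t^\eps)_{t\geq 0}$ is tight, and this inequality transfers tightness back to $(\nnu_t)_{t\geq 0}$.

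Next, I would identify all subsequential limits. By Prokhorov's theorem, every subsequence of $(\nnu_t)_{t\geq 0}$ admits a further weakly convergent subsequence $\nnu_{t_k} \Rightarrow \nnu$. Because the map $(x,y) \mapsto x * (\1 + y)$ from $\R^d \times \R^d$ to $\R^d$ is continuous, the continuous mapping theorem applied to the product measures $\nnu_{t_k} \otimes \operatorname{Unif}([0,\eps]^d)$ gives $\nnu_{t_k}^\eps \Rightarrow \nnu^\eps$. Combining with the hypothesis $\nnu_{t_k}^\eps \Rightarrow \nnu_\infty^\eps$ and uniqueness of weak limits, I obtain $\nnu^\eps = \nnu_\infty^\eps$ for every $\eps > 0$.

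Finally, I would pass $\eps \to 0$. For $X \sim \nnu$ independent of $Y_\eps \sim \operatorname{Unif}([0,\eps]^d)$, we have $X*(\1+Y_\eps) \to X$ almost surely as $\eps \to 0$, so $\nnu^\eps \Rightarrow \nnu$; analogously $\nnu_\infty^\eps \Rightarrow \nnu_\infty$. The identity $\nnu^\eps = \nnu_\infty^\eps$ therefore forces $\nnu = \nnu_\infty$. Since every subsequence of $(\nnu_t)$ has a further subsequence converging weakly to $\nnu_\infty$, the whole family converges weakly to $\nnu_\infty$. I do not expect any significant obstacle: the argument combines Prokhorov, the continuous mapping theorem and subsequence extraction in a routine way. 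The only mildly delicate point is noticing that the one-sidedness $Y_i \geq 0$ (specific to the uniform on $[0,\eps]^d$ rather than a symmetric perturbation) is exactly what makes the tightness of a single $\nnu_t^\eps$ imply tightness of $\nnu_t$.
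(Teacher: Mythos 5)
Your proof is correct. The paper itself gives no proof of this lemma (it explicitly states ``whose proof is skipped''), so there is nothing to compare against; your argument via (a) transferring tightness using the coordinatewise bound $|x_i(1+y_i)|\geq |x_i|$, (b) identifying subsequential limits through the continuous mapping theorem applied to $(x,y)\mapsto x*(\1+y)$, and (c) letting $\eps\to 0$ is a clean and complete way to establish the claim. One small quibble: the one-sidedness of $Y_i\geq 0$ is convenient but not essential for the tightness step; for a symmetric perturbation $Y\sim\operatorname{Unif}([-\eps,\eps]^d)$ with $\eps<1$ one still has $|x_i(1+y_i)|\geq (1-\eps)|x_i|$, which suffices after adjusting $R$.
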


\begin{proof}[Proof of Theorem~\ref{thm:clt_as_pois}]
Fix $\omega\in\Omegap$ and let $\llambda\in \Lambda(\omega)$, as defined in Lemma~\ref{lem:mgf_as}. Set
\begin{align*}
\nnu_t(\cdot)=\nnu_t^0(\cdot)\coloneqq \mu_{\omega,t}^{\beta,\llambda}\left(\frac{B_t-t\llambda}{\sqrt t}\in\cdot\right)
\end{align*}
and let $\nnu_\infty=\nnu_\infty^0$ be the standard normal distribution on $\R^d$. For $\eps>0$, define $\nnu_t^\eps$ as in Lemma~\ref{lem:weak_conv} and consider the corresponding moment generating function,
\begin{align*}
M_t^\eps(\llambda')\coloneqq &\int_{\R^d} e^{\llambda' \cdot x}\nnu_t^\eps(\dd x)\\
\Big(=&\eps^{-d}\int_{[0,\eps]^d}M_t^0(\llambda'*(\1+\mmu))\dd \mmu\quad\text{for }\eps>0\Big).
\end{align*}
In the second line we have used that the element-wise product $*$ commutes with the scalar product $\cdot$, i.e. $(\boldsymbol x*\boldsymbol y)\cdot\boldsymbol z=\boldsymbol x\cdot(\boldsymbol y*\boldsymbol z)$. We will show that, for any $\eps>0$ and $\llambda'\in\R^d$ with $\llambda_1',\dots,\llambda_d'\neq 0$,
\begin{align}\label{eq:limit}
\lim_{t\to\infty}M_t^\eps(\llambda')=M_\infty^\eps(\llambda').
\end{align}
Since $\{\llambda'\in\R^d:\llambda_1',\dots,\llambda_d'\neq 0\}$ is dense and $M_\infty^\eps$ is continuous, we conclude that $\nnu_t^\eps$ weakly converges to $\nnu^\eps$ for any $\eps>0$ and hence the claim follows by Lemma~\ref{lem:weak_conv}. Now,
\begin{equation}\label{eq:herehereher}
\begin{split}
M_t^\eps(\llambda')
&=\eps^{-d}\int_{[0,\eps]^d} \frac{E^{\llambda}[e^{(\llambda'*(\1+\mmu ))\cdot (B_t-\llambda t)/\sqrt t}e^{\beta H_t(\omega,B)-t(e^{\lambda(\beta)}-1)}]}{W_t^{\beta,\llambda}}\dd \mmu\\
&=\eps^{-d}\int_{[0,\eps]^d} e^{\frac 12\|\llambda'*(1+\mmu)\|_2^2}\frac{W^{\beta,\llambda+\frac{\llambda'*(\1+\mmu)}{\sqrt t}}}{W_t^{\beta,\llambda}}\dd \mmu\\ 
&=\frac{1}{|E_t|}\int_{E_t}e^{\frac 12t\|\mmu\|_2^2 }\frac{W_t^{\beta,\llambda+\mmu}}{W_t^{\beta,\llambda}}\dd \mmu,
\end{split}
\end{equation}
where $E\coloneqq [\llambda_1',(1+\eps)\llambda_1']\times\dots\times[\llambda_d',(1+\eps)\llambda_d']$, $E_t\coloneqq E/\sqrt t$ and where we adopt the convention that  $[a,b]$ is to be interpreted as $[a\wedge b,a\vee b]$. Note that $|E|=\eps^{d}\prod_{i=1}^d|\llambda_i'|>0$. Finally, Lemma~\ref{lem:mgf_as} and a straightforward approximation argument show that
\begin{align*}
\lim_{t\to\infty}M_t^\eps(\llambda') =\frac{1}{|E|}\int_Ee^{\frac 12\|\mmu\|_2^2}\dd \mmu=M_\infty^\eps(\llambda').
\end{align*}
\end{proof}

\begin{proof}[Proof of Lemma~\ref{lem:mgf_as}]
For $\omega\in\Omegap$, $\llambda\in[0,1]^d$ and $T>0$, let
\begin{align*}
S_T(\omega,\llambda)\coloneqq \sup_{s,t\geq T}\left|W_s^{\beta,\llambda}(\omega)-W_t^{\beta,\llambda}(\omega)\right|.
\end{align*}
We have $S_T(\omega,\llambda)\leq 2\sup_{t\geq 0}W_t^{\beta,\llambda}(\omega)$ and, by Fubini's theorem, \eqref{eq:sup_integrable}, and the shift invariance,
\begin{align}\label{eq:more_complicated}
\E\Big[\int_{[0,1]^d}\sup_{t\geq 0} W_t^{\beta,\llambda}\dd\llambda\Big]=\E\left[\sup_{t\geq 0}W_t^{\beta}\right]<\infty.
\end{align}
In particular, for $\Pp$-almost all $\omega$,
\begin{align}\label{eq:in_particular}
\int_{[0,1]^d} S_T(\omega,\llambda)\dd\llambda<\infty.
\end{align}
Moreover, by dominated convergence and shift invariance,
\begin{align}\label{eq:rot}
\lim_{T\to\infty}\E\Big[\int_{[0,1]^d} S_T(\omega,\llambda)\dd\llambda\Big]=\int_{[0,1]^d}\lim_{T\to\infty}\E\big[ S_T(\omega,\llambda)\big]\dd\llambda=\lim_{T\to\infty}\E\big[ S_T(\omega,0)\big]=0
\end{align}
and, since $T\mapsto S_T(\omega,\llambda)$ is non-negative and decreasing, 
\begin{align}\label{eq:almost_all}
\lim_{T\to\infty}S_T(\omega,\llambda)=0
\end{align}
for $\Pp\otimes\operatorname{Leb}|_{[0,1]^d}$-almost all $(\omega,\llambda)$. Using \eqref{eq:in_particular}, we can apply Theorem~\ref{thmx:rudin} to $f(\llambda)=S_{k}(\omega,\llambda)$. Let $A_k(\omega)$ be the set obtained in this way and $A(\omega)\coloneqq \bigcap_{k=1}^\infty A_k(\omega)$. Then, for $\llambda\in A(\omega)$ and $t\geq k$,
\begin{align*}
\left|\frac{1}{|E_t|}\int_{\llambda+E_t}W_t^{\beta,\llambda'}(\omega)\dd\llambda'-W_\infty^{\beta,\llambda}(\omega)\right|&\leq \left|\frac{1}{|E_t|}\int_{\llambda+E_t}W_k^{\beta,\llambda'}(\omega)\dd\llambda'-W_k^{\beta,\llambda}(\omega)\right|\\
&\quad +S_k(\omega,\llambda)+\frac{1}{|E_t|}\int_{\llambda+E_t}S_k(\omega,\llambda')\dd\llambda'\\
&\xrightarrow{t\to\infty} 2S_k(\omega,\llambda)\\
&\xrightarrow{k\to\infty}0,
\end{align*}
where we used the continuity of $\llambda'\mapsto W_k^{\beta,\llambda'}(\omega)$ for fixed $k$ in the first limit and \eqref{eq:almost_all} in the final line.
\end{proof}

\begin{proof}[Proof of Theorem~\ref{thm:clt_as_bond}]
The proof is similar to Proposition~\ref{prop:defect} with $[0,1]^d$ replaced by $[-\lambda_0,\lambda_0]^d$ and where the moment generating function of the simple random walk has to be used in place of that of Brownian motion in \eqref{eq:herehereher}. We explain how to prove a result corresponding to Lemma~\ref{lem:mgf_as} without relying on shift invariance. Namely, we obtain an analog of \eqref{eq:rot} from \eqref{eq:W_nW_m} and for \eqref{eq:more_complicated} we compute, using Proposition~\ref{prop:drift}(i) and Doob's maximal inequality,
\begin{align*}
\E\Big[\int_{[-\lambda_0,\lambda_0]^d}\sup_{n\in\N} W_n^{\beta,\aalpha(\llambda)}\dd\llambda\Big]
&\leq C\int_{[-\lambda_0,\lambda_0]^d}\sup_N\|W_N^{\beta,\aalpha(\llambda)}\|_p\,\dd\llambda<\infty.\tag*{\qedhere}
\end{align*}
\end{proof}

\section*{Acknowledgements}
The author is grateful to Ryoki Fukushima for many inspiring discussions and helpful suggestions. This work was supported by the JSPS Postdoctoral Fellowship for Research in Japan, Grant-in-Aid for JSPS Fellows 19F19814.
\bibliographystyle{plain}
\bibliography{note}

\end{document}